\documentclass[12pt, reqno]{amsart}
\usepackage{kpfonts}
\usepackage{amsmath,amsthm,amscd,amsfonts,amssymb,graphicx,color}
\usepackage[bookmarksnumbered,colorlinks,plainpages]{hyperref}

\usepackage{amsmath,amssymb}
\usepackage{graphicx}
\usepackage{amsmath, amssymb}
\usepackage{tikz}
\usepackage{cases}
\usetikzlibrary{positioning, arrows.meta}
\usepackage[a4paper,right=1in,left=1in,top=1.5in,bottom=1.5in]{geometry}
\usepackage{ragged2e}
\usepackage{float}
\hypersetup{colorlinks=true,linkcolor=black, anchorcolor=black,
	citecolor=black, urlcolor=black, filecolor=magenta, pdftoolbar=true}
    \newtheorem{theorem}{Theorem}[section]
\newtheorem{lemma}{Lemma}[section]
\newtheorem{proposition}{Proposition}[section]

\theoremstyle{definition}
\newtheorem{definition}{Definition}[section]
\newtheorem{example}{Example}[section]

\theoremstyle{remark}
\newtheorem{remark}{Remark}[section]
\numberwithin{equation}{section}

\begin{document}
\setcounter{page}{1}
\setcounter{page}{1}

\noindent 
{\textbf{\Large Interval-Valued Optimization Problems for Strongly LU-$\mathcal{E}$-Invex and Strongly LU-$\mathcal{E}$-Preinvex Functions}}\\

	\begin{center}
		{\bf Tauheed, Akhlad Iqbal and Amir Suhail  }
	\end{center}
\bigskip

\textbf{Abstract}: In this paper, we introduce and explore the concepts of  strongly LU-$\mathcal{E}$-preinvex (SLU$\mathcal{E}$P), pseudo strongly  LU-$\mathcal{E}$-preinvex (PSLU$\mathcal{E}$P) and strongly LU-$\mathcal{E}$-invex (SLU$\mathcal{E}$I) functions. To illustrate and validate these definitions, we provide several non-trivial examples. Additionally, we extend the idea of strongly-G invex sets to the context of interval-valued functions. The epigraph of a SLU$\mathcal{E}$P function is derived, and a relationship between SLU$\mathcal{E}$P and PSLU$\mathcal{E}$P functions have been explored. A key contribution of this work is the identification of a significant connection between weakly-strongly $\mathcal{E}$-invex functions and SLU$\mathcal{E}$P functions. As an application, we consider a nonlinear programming problem involving SLU$\mathcal{E}$P functions. Under certain conditions, we prove that a local minimum of the problem is also a global minimum. Moreover, the sufficiency of Karush-Kuhn-Tucker (KKT) optimality conditions by considering the objective and constraint functions are SLU$\mathcal{E}$I and S$\mathcal{E}$I respectively. The theoretical results are validated through illustrative examples and counterexamples.

\noindent
{\bf{Mathematics Subject Classification.}} 90C25, 90C26, 90C30, 90C46, 52A20\\
 \noindent
 {\bf{Keywords}} {: $\mathcal{E}$-invex sets, S$\mathcal{E}$I-sets, GSEI sets, SLU$\mathcal{E}$P-functions, $KKT$ condition}\\

\noindent
Tauheed\\  
{\textit{tauheedalicktd@gmail.com}}\\
\noindent
Akhlad Iqbal\\  
{\textit{akhlad.mm@amu.ac.in}}\\
\noindent
Amir Suhail\\  
{\textit{amir.mathamu@gmail.com}}\\
\noindent
Aligarh Muslim University, Aligarh-202002, India\\
\section{\textbf{introduction}}

  In recent years, generalized convexity has evolved considerably and remains fundamental in optimization theory, economics, engineering applications, and in the study of Riemannian manifolds, where it helps tackle diverse real-world problems.\cite{hanson,Akhlad1,Akhlad2,Akhlad3,Neogy}. Recognizing its broad applications, Youness \cite{Youness1999} introduced the concept of $\mathcal{E}$-convexity. Building on this, Youness \cite{Youness2013} extended the idea to strongly $\mathcal{E}$-convexity and explored its practical implications. Inspired by these developments, Majeed \cite{S.N.2} further enriched the theory by defining the strongly $\mathcal{E}$-convex hull, cone, and cone hull, and investigating their fundamental properties. Subsequently, Hussain and Iqbal \cite{Hussain} broadened the scope by introducing quasi strongly $\mathcal{E}$-convex functions and studied nonlinear programming problems within this framework.

 Another significant generalization is due to Fulga and Preda \cite{Fulga}, who defined $\mathcal{E}$-invex sets and $\mathcal{E}$-preinvex functions, providing a solid foundation for further advances in nonlinear optimization theory. Researchers like Wu have extended classical convex analysis to interval-valued functions, developing LU-convexity and proposing interval-valued KKT conditions \cite{Wu2007, Wu2009}.\\
Recently, Bhat et al.\ \cite{Bhat2024,Bhat2025a,Bhat2025b} have advanced this field by developing optimality conditions, establishing generalized Hukuhara directional differentiability, and deriving necessary conditions for interval-valued functions on Riemannian manifolds-highlighting the expanding role of interval analysis under generalized order relations.\\
Partial differentiability  for an IVF has been defined by Stefanini \cite{Stefanini2019} and Ghosh et al. \cite{Ghosh,Ghosh2019} but complete characterization of gH-partial differentiability has been discussed in \cite{Tauheed}.
Motivated by these influential works \cite{Bazaara,Lupsa,Pini,Suneja}, this paper introduces and investigates new classes of interval-valued functions under the LU ordering: strongly LU-$\mathcal{E}$-preinvex (SLU$\mathcal{E}$P), and pseudo strongly LU-$\mathcal{E}$-preinvex (PSLU$\mathcal{E}$P), strongly LU-$\mathcal{E}$-invex (SLU$\mathcal{E}$I) functions.\\
The definition of gH-gradient for an IVF and gH-product of a vector with n tuples of interals  in \cite{Tauheed} provides a lemma which helps us in defining strongly LU-$\mathcal{E}$-invex (SLU$\mathcal{E}$I) functions. These new generalizations extend the theory of strong invexity to the interval setting, capturing more intricate functional behaviors while preserving valuable analytical properties.
We present precise definitions of these classes and explore their main properties, supported by carefully constructed examples and counterexamples. Furthermore, we extend the concept of strongly G-invex (SGI) sets to the interval context, creating a unified framework for generalized convex sets and interval-valued functions.

A key contribution of this work is the characterization of the epigraph of SLU$\mathcal{E}$P functions and the establishment of clear relationships among SLU$\mathcal{E}$P, PSLU$\mathcal{E}$P, and weakly-strongly $\mathcal{E}$-invex functions, thereby enriching the understanding of their interconnections.

As an application, we formulate a nonlinear programming problem involving SLU$\mathcal{E}$P functions and demonstrate that, under suitable conditions, any local minimum is guaranteed to be a global minimum, thereby generalizing classical results. Moreover, the sufficiency of Karush-Kuhn-Tucker (KKT) optimality conditions by considering the objective and constraint functions are SLU$\mathcal{E}$I and S$\mathcal{E}$I respectively.

Overall, this paper adds to the growing body of research on optimization under uncertainty by expanding the theoretical framework of interval-valued functions and offering robust tools for tackling complex nonlinear problems. Theoretical results are reinforced with illustrative examples and counterexamples to clarify their scope and limitations.

  \section{\textbf{preliminaries}}
Let \( \mathcal{I}(\mathbb{R}) \) denote the set of all closed and bounded intervals on  \( \mathbb{R} \). For any interval \( \mathcal{U} \in \mathcal{I}(\mathbb{R}) \), it is defined as:

\[
\mathcal{U} = [u^L, u^U] \quad \text{where} \quad u^L, u^U \in \mathbb{R} \quad \text{and} \quad u^L \leq u^U.
\]

Given two intervals \( \mathcal{U}_1 = [u_1^L, u_1^U] \) and \( \mathcal{U}_2 = [u_2^L, u_2^U] \) in \( \mathcal{I}(\mathbb{R}) \), their sum is defined as:

\[
\mathcal{U}_1 + \mathcal{U}_2 = [u_1^L + u_2^L, u_1^U + u_2^U].
\]

The negation of \( \mathcal{U}_1 \) is given by:

\[
-\mathcal{U}_1 = [-u_1^U, -u_1^L].
\]

Consequently, the difference between \( \mathcal{U}_1 \) and \( \mathcal{U}_2 \) is expressed as:

\[
\mathcal{U}_1 - \mathcal{U}_2 = \mathcal{U}_1 + (-\mathcal{U}_2) = [u_1^L - u_2^U, u_1^U - u_2^L].
\]

Additionally, scalar multiplication of \( \mathcal{U}_1 \) by a real number \( k \) is defined as:

\[
k\mathcal{U}_1 = 
\begin{cases} 
[ku_1^L, ku_1^U] & \text{if } k \geq 0, \\
[ku_1^U, ku_1^L] & \text{if } k < 0.
\end{cases}
\]

This summarizes the fundamental operations on intervals within the set \( \mathcal{I}(\mathbb{R}) \).

For a deeper understanding of interval analysis, the interested reader is encouraged to consult the foundational works by Moore~\cite{Moore1979}, as well as the comprehensive treatment provided by Alefeld and Herzberger~\cite{AlefeldHerzberger}.

The Hausdorff distance between two intervals \( \mathcal{U }_1= [u_1^L, u_1^U] \) and \(\mathcal{ U}_2 = [u_2^L, u_2^U] \) is defined as:

\[
d_H(\mathcal{U}_1, \mathcal{U}_2) = \max\{|u_1^L - u_2^L|, |u_1^U - u_2^U|\}.
\]

A limitation of  standard interval subtraction is that, for any interval \(\mathcal{ U} \in \mathcal{ I(\mathbb{R})} \), the result of \( \mathcal{U - U }\) is not equal to zero. For instance, if \( \mathcal{U }= [0, 1] \), then:

\[
\mathcal{U - U} = [0, 1] - [0, 1] = [-1, 1] \neq 0.
\]

To resolve this issue, the \textit{Hukuhara difference} between two intervals \( \mathcal{U}_1 = [u_1^L, u_1^U] \) and \(\mathcal{ U }_2= [u_2^L, u_2^U] \) is introduced as:

$$\mathcal{U}_1 \ominus \mathcal{U}_2 = [u_1^L - u_2^L, u_1^U - u_2^U].$$

With this definition, for any interval \( \mathcal{U} \in \mathcal{ I(\mathbb{R})} \), \(\mathcal{ U \ominus U }= 0 \). However, the Hukuhara difference is not always valid for arbitrary intervals. For example, \( [0, 4] \ominus [0, 8] = [0, -4] \), which is not a interval since the lower bound exceeds the upper bound. This highlights a constraint in the applicability of the Hukuhara difference.

To overcome this limitation, Stefanini and Bede \cite{Stefanini2009} proposed the \textit{generalized Hukuhara difference} (gH-difference) for two intervals \( \mathcal{U}_1 \) and \(\mathcal{ U}_2 \), which is defined as:

\[
\mathcal{U}_1 \ominus_{gH} \mathcal{U}_2= \mathcal{U}_3 \iff 
\begin{cases} 
    (i) & \mathcal{U}_1 =\mathcal{ U}_2 + \mathcal{U}_3, \quad \text{or} \\ 
    (ii) & \mathcal{U}_2 = \mathcal{U}_1 - \mathcal{U}_3. 
\end{cases}
\]

In case \((i)\), the gH-difference is equivalent to the Hukuhara difference (H-difference) \cite{Wu2007}.

 For any two intervals \( \mathcal{U}_1 = [u_1^L, u_1^U] \) and \(\mathcal{ U}_2 = [u_2^L, u_2^U] \), the gH-difference \( \mathcal{U}_1 \ominus_{gH}\mathcal{ U}_2 \) always exists and is uniquely determined. Moreover, the following properties hold:
 \begin{equation*}
     \begin{aligned}
         \mathcal{U}_1 \ominus_{gH} \mathcal{U}_1 =& [0, 0]\\ \quad \text{and} \quad \\
\mathcal{U}_1 \ominus_{gH} \mathcal{U}_2 =& \left[\min\{u_1^L - u_2^L, u_1^U - u_2^U\}, \max\{u_1^L - u_2^L, u_1^U - u_2^U\}\right]
     \end{aligned}
 \end{equation*}

This generalized approach ensures that the difference between intervals is always well defined and resolves the issues associated with the standard Hukuhara difference.

 
\begin{definition} \cite{Wu2007}. \textbf{Order-relation} 
Let \( \mathcal{U}_1 = [u_1^L, u_1^U] \) and \( \mathcal{U}_2 = [u_2^L, u_2^U] \) be two closed intervals in \( \mathbb{R} \). We define the relation \( \ \preceq\) as follows:

\begin{equation*}
    \mathcal{U}_1 \preceq\mathcal{ U}_2 \quad \text{if and only if} \quad u_1^L \leq u_2^L \quad \text{and} \quad u_1^U \leq u_2^U.
\end{equation*}

Next, we define the strict inequality \(\mathcal{ U}_1 \prec\mathcal{ U}_2 \) as \(\mathcal{ U}_1 \preceq \mathcal{U}_2 \) and \( \mathcal{U}_1 \neq \mathcal{U}_2 \). This can be equivalently expressed as:

 \[
\begin{cases}
u_{1}^L < u_{2}^L \\
u_{1}^U \leq u_{2}^U
\end{cases}
\quad \text{or} \quad
\begin{cases}
u_{1}^L \leq u_{2}^L \\
u_{1}^U < u_{2}^U
\end{cases}
\quad \text{or} \quad
\begin{cases}
u_{1}^L < u_{2}^L \\
u_{1}^U < u_{2}^U.
\end{cases}
\]
\end{definition} 
\begin{proposition} \cite{Stefanini2009}
        \item[(i)] $\forall$ \( \mathcal{ U}_1, \mathcal{ U}_2 \in \mathcal{I}(\mathbb{R}) \), \( \mathcal{ U}_1 \ominus_{gH} \mathcal{ U}_2 \) always exists and \( \mathcal{ U}_1 \ominus_{gH} \mathcal{ U}_2 \in \mathcal{I}(\mathbb{R}) \).
        \item[(ii)] \( \mathcal{ U}_1 \ominus_{gH} \mathcal{ U}_2 \preceq 0 \) if and only if \( \mathcal{ U}_1 \preceq \mathcal{ U}_2 \).
\end{proposition}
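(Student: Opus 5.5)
Both parts reduce to the explicit formula for the gH-difference recorded just before the order-relation definition:
\[
\mathcal{U}_1 \ominus_{gH} \mathcal{U}_2 = \left[\min\{u_1^L - u_2^L,\, u_1^U - u_2^U\},\ \max\{u_1^L - u_2^L,\, u_1^U - u_2^U\}\right].
\]
I would first justify this formula, since everything else follows from it.

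\textbf{Step 1 (existence and uniqueness, part (i)).} Set $a = u_1^L - u_2^L$ and $b = u_1^U - u_2^U$, and look for $\mathcal{U}_3 = [w^L, w^U]$.
\begin{itemize}
\item[$\bullet$] Case (i) of the definition, $\mathcal{U}_1 = \mathcal{U}_2 + \mathcal{U}_3$, forces $w^L = a$ and $w^U = b$. This is an interval exactly when $a \le b$.
\item[$\bullet$] Case (ii), $\mathcal{U}_2 = \mathcal{U}_1 - \mathcal{U}_3 = [u_1^L - w^U,\, u_1^U - w^L]$, forces $w^U = a$ and $w^L = b$. This is an interval exactly when $b \le a$.
\end{itemize}
Since $a$ and $b$ are real numbers, at least one case applies. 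If both apply, then $a = b$ and the two candidates coincide. So in every situation $\mathcal{U}_3 = [\min\{a,b\}, \max\{a,b\}]$ exists, is unique, and is a genuine element of $\mathcal{I}(\mathbb{R})$. The only delicate point is checking that each case yields a well-ordered interval in its own regime; I expect this to be the main (minor) obstacle.

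\textbf{Step 2 (part (ii)).} By the order relation of Definition (Wu), $\mathcal{U}_1 \ominus_{gH} \mathcal{U}_2 \preceq [0,0]$ means $\min\{a,b\} \le 0$ and $\max\{a,b\} \le 0$. This is equivalent to $a \le 0$ and $b \le 0$, i.e.\ $u_1^L \le u_2^L$ and $u_1^U \le u_2^U$. That is precisely $\mathcal{U}_1 \preceq \mathcal{U}_2$.

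Both directions of (ii) follow from this chain of equivalences, which completes the plan.
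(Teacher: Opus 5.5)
Your proposal is correct. Reading off $[\min\{a,b\},\max\{a,b\}]$ from the two cases of the definition (noting that they coincide when $a=b$) gives existence, uniqueness and membership in $\mathcal{I}(\mathbb{R})$, and part (ii) then follows componentwise from the order relation; the paper gives no proof of its own (it cites Stefanini and Bede) but records exactly this min/max formula just before the statement, so your argument is the intended one.
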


\begin{definition}Let \( \mathcal{S} \subseteq \mathcal{I}(\mathbb{R}) \) be a finite subset. 
An interval \( \mathcal{U}_1 \in \mathcal{I}(\mathbb{R}) \) is said to be a \emph{maximum element} of \( \mathcal{S} \) if, for every \( \mathcal{U}_2 \in \mathcal{S} \), the following condition holds:

\[
\mathcal{U}_2 \preceq \mathcal{U}_1.
\]

In this case, we denote the maximum element as:

\[
\max \mathcal{S} = \mathcal{U}_1.
\]
\end{definition}


\begin{definition} \cite{Wu2007}.A function \( \tilde{h}: \mathbb{R}^n \to \mathcal{I}(\mathbb{R}) \) is said to be an interval-valued function (IVF) if, for each point \( \mathit{\zeta}  \in \mathbb{R}^n \), it can be expressed as:

\[
\tilde{h}(\mathit{\zeta} ) = \left[\tilde{h}^L(\mathit{\zeta} ), \tilde{h}^U(\mathit{\zeta} )\right],
\]

where \( \tilde{h}^L, \tilde{h}^U : \mathbb{R}^n \rightarrow \mathbb{R} \) are real-valued functions such that \( \tilde{h}^L(\mathit{\zeta} ) \leq \tilde{h}^U(\mathit{\zeta} ) \), $\forall$ \( \mathit{\zeta}  \in \mathbb{R}^n \).
\end{definition} 

\begin{definition} \cite{KumarandGhosh}
    Let $\mathcal{K} \subseteq \mathcal{I}(\mathbb{R})$. An interval $\bar{A} \in \mathcal{I}(\mathbb{R})$ is said to be a lower bound of $\mathcal{K}$ if 
\[
\bar{A} \preceq B \quad \text{for all } B \in \mathcal{K}.
\]
A lower bound $\bar{A}$ of $\mathcal{K}$ is called an infimum of $\mathcal{K}$ if for all lower bounds $C$ of $\mathcal{K}$ in $\mathcal{I}(\mathbb{R})$,
\[
C \preceq \bar{A}.
\]

Similarly, an interval $\bar{A} \in \mathcal{I}(\mathbb{R})$ is said to be an upper bound of $\mathcal{K}$ if 
\[
B \preceq \bar{A} \quad \text{for all } B \in \mathcal{K}.
\]
An upper bound $\bar{A}$ of $\mathcal{K}$ is called a supremum of $\mathcal{K}$ if for all upper bounds $C$ of $\mathcal{K}$ in $\mathcal{I}(\mathbb{R})$,
\[
\bar{A} \preceq C.
\]
\end{definition}
If lower bound of $\mathcal{K}$ does not exist then infimum of $\mathcal{K}$ is $+\infty$ ([$+\infty$,$+\infty$]).
If upper bound of $\mathcal{K}$ does not exist then supremum of $\mathcal{K}$ is $-\infty$ ([$-\infty$,$-\infty$]).
\begin{definition}\cite{KumarandGhosh}
Let $\mathcal{S}$ be a nonempty subset of $\mathbb{R}^n $ and 
$\mathsf{T} : \mathcal{S} \to {\mathcal{I}(\mathbb{R})}$
be an extended IVF. Then, the infimum of $\mathsf{T}$ denoted as 
$\inf_{\zeta \in \mathcal{S}} \mathsf{T}(\zeta)$
is equal to the infimum of the range set of $\mathsf{T}$, that is
\[
\inf_{\zeta \in \mathcal{S}} \mathsf{T}(\zeta)
    = \inf\{\mathsf{T}(\zeta) : \zeta \in \mathcal{S}\}.
\]

Similarly, the supremum of an IVF is defined by
\[
\sup_{\zeta \in \mathcal{S}} \mathsf{T}(\zeta)
    = \sup\{\mathsf{T}(\zeta) : \zeta \in \mathcal{S}\}.
\]
\end{definition}

 Building on this formal structure, Wu \cite{Wu2007} introduced a rigorous extension of classical calculus into the interval domain: the notions of continuity, limit, and two distinct forms of differentiability for interval-valued mappings. Next we give few Definitions and results that will be used in building our main results.


\begin{proposition} \cite{Wu2007} Suppose $\mathcal{S} \subseteq \mathbb{R}$ is an open set and $\tilde{h}: \mathcal{S} \rightarrow \mathcal{I}(\mathbb{R})$ is an IVF, with $\tilde{h}(\mathit{\zeta} ) = [\tilde{h}^L(\mathit{\zeta} ), \tilde{h}^U(\mathit{\zeta} )]$ if  $\tilde{h}^L$ and $\tilde{h}^U$ are differentiable at $\mathit{\zeta} _0$ in the classical sense. Then  $\tilde{h} $ is said to be weakly differentiable at a point $\mathit{\zeta} _0$
\end{proposition}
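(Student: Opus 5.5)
The statement is essentially definitional: in Wu's framework, an IVF $\tilde h=[\tilde h^L,\tilde h^U]$ on an open $\mathcal S\subseteq\mathbb R$ is called \emph{weakly differentiable} at $\zeta_0$ exactly when the two real endpoint functions $\tilde h^L$ and $\tilde h^U$ are differentiable at $\zeta_0$ in the classical sense. So the plan is first to recall that definition precisely, in the form used in \cite{Wu2007}, and then to observe that the hypothesis of the proposition is literally this condition. This gives the conclusion with nothing further to check.

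To make the statement less vacuous and to connect it with the gH-calculus used later, I would also show that the derivatives $(\tilde h^L)'(\zeta_0)$ and $(\tilde h^U)'(\zeta_0)$ assemble into a genuine interval-valued limit. For $t\neq 0$ small, write the difference quotient with the explicit formula for the gH-difference recalled above:
\[
\frac{1}{t}\bigl(\tilde h(\zeta_0+t)\ominus_{gH}\tilde h(\zeta_0)\bigr)
=\frac1t\Bigl[\min\{\Delta^L_t,\Delta^U_t\},\ \max\{\Delta^L_t,\Delta^U_t\}\Bigr],
\]
where $\Delta^L_t=\tilde h^L(\zeta_0+t)-\tilde h^L(\zeta_0)$ and $\Delta^U_t=\tilde h^U(\zeta_0+t)-\tilde h^U(\zeta_0)$. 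Scalar multiplication by $1/t$ either preserves or swaps the endpoints according to the sign of $t$. In both cases the resulting interval equals
\[
\Bigl[\min\{\Delta^L_t/t,\ \Delta^U_t/t\},\ \max\{\Delta^L_t/t,\ \Delta^U_t/t\}\Bigr].
\]

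Next, the maps $(a,b)\mapsto\min\{a,b\}$ and $(a,b)\mapsto\max\{a,b\}$ are $1$-Lipschitz. Together with the definition of $d_H$, this gives convergence of the quotient in the Hausdorff metric to
\[
\Bigl[\min\{(\tilde h^L)'(\zeta_0),(\tilde h^U)'(\zeta_0)\},\ \max\{(\tilde h^L)'(\zeta_0),(\tilde h^U)'(\zeta_0)\}\Bigr].
\]

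The only delicate point is bookkeeping, not analysis. One has to handle the endpoint swap caused by negative $t$ in the scalar multiplication, and the possibility that $(\tilde h^L)'(\zeta_0)>(\tilde h^U)'(\zeta_0)$, so that the derivatives do not themselves form an ordered pair. Taking min/max at every stage handles both at once. Beyond that, the weak differentiability claim follows directly from the definition.
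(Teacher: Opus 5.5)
Your reading is correct and agrees with the paper, which offers no proof. The statement is simply Wu's definition of weak differentiability (both endpoint functions are classically differentiable at $\zeta_0$) quoted from \cite{Wu2007}, so the hypothesis is the conclusion. Your extra computation showing that $\tilde h$ is then gH-differentiable with derivative $\bigl[\min\{(\tilde h^L)'(\zeta_0),(\tilde h^U)'(\zeta_0)\},\max\{(\tilde h^L)'(\zeta_0),(\tilde h^U)'(\zeta_0)\}\bigr]$ is also correct, including the sign-swap for $t<0$ and the Lipschitz min/max step, but the statement does not require it.
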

\vspace{0.2cm}


\begin{definition}\cite{Stefanini2009}
Let $\mathit{\zeta} _0 \in (a, b)$ and suppose $\tilde{h} $ is such that $\mathit{\zeta} _0 + t \in (a, b)$. The gH-derivative of an IVF $\tilde{h} : (a, b) \to \mathcal{I}(\mathbb{R})$ at $\mathit{\zeta} _0$ is given as follows:
\begin{equation*}
    \tilde{h}'(\mathit{\zeta} _0) = \lim_{t \to 0} \frac{\tilde{h}(\mathit{\zeta} _0 + t) \ominus_{gH}\tilde{h}(\mathit{\zeta} _0)}{t}.
\end{equation*}
If this limit exists and lies in $\mathcal{I}(\mathbb{R})$, then $\tilde{h} $ is said to be gH-differentiable at $\mathit{\zeta} _0$.
\end{definition}
 
Let $\mathbb{R}^n$ denote the $n$-dimensional Euclidean space, and let $\mathcal{E}: \mathbb{R}^n \rightarrow \mathbb{R}^n$ be a mapping defined on this space. To extend classical convexity, Youness~\cite{Youness2013} introduced the concept of strongly $\mathcal{E}$-convex functions, which generalize the notion of convexity by involving the map $\mathcal{E}$ to capture broader structural properties of functions and sets.
 
\begin{definition}
	\cite{Youness2013}\label{definition 2.5 p4}
	A non-empty set $\mathcal{S} \subseteq \mathbb{R}^n$ is said to be strongly $\mathcal{E}$-convex with respect to a mapping $\mathcal{E}: \mathbb{R}^n \rightarrow \mathbb{R}^n$ if, for any $\zeta, \delta \in \mathcal{S}$ and for all $\alpha, \lambda \in [0,1]$, the following condition holds:
\[
\lambda \big( \alpha \zeta + \mathcal{E}(\zeta) \big) + (1-\lambda) \big( \alpha \delta + \mathcal{E}(\delta) \big) \in \mathcal{S}.
\]

\end{definition}
\begin{definition}\label{defintion 2.6 p4}
	\cite{Youness2013}
	Let $\mathcal{S} \subseteq \mathbb{R}^n$ be a strongly $\mathcal{E}$-convex set with respect to a mapping $\mathcal{E}: \mathbb{R}^n \rightarrow \mathbb{R}^n$. A  function $\tilde{h}: \mathcal{S} \subseteq \mathbb{R}^n \to \mathbb{R}$ is said to be strongly $\mathcal{E}$-convex on $\mathcal{S}$ if, for every $\zeta, \delta \in \mathcal{S}$ and all $\alpha \in [0,1]$ and $\lambda \in [0,1]$, the following condition holds:
\[
\tilde{h}\Big(\lambda (\alpha \zeta + \mathcal{E}(\zeta)) + (1-\lambda)(\alpha \delta + \mathcal{E}(\delta))\Big)
\leq \lambda \tilde{h}\big(\mathcal{E}(\zeta)\big) + (1-\lambda)\tilde{h}\big(\mathcal{E}(\delta)\big).
\]

If the inequality is strict whenever $\alpha \zeta + \mathcal{E}(\zeta) \neq \alpha \delta + \mathcal{E}(\delta)$ and $\lambda \in (0,1)$, then $\tilde{h}$ is called strictly strongly $\mathcal{E}$-convex.

Moreover, when $\alpha = 0$, the notion of strongly $\mathcal{E}$-convexity reduces to the standard $\mathcal{E}$-convexity.
\end{definition}

Fulga et al.\ \cite{Fulga} extended the concept of $\mathcal{E}$-convexity originally introduced by Youness \cite{Youness2001a} to a broader notion called $\mathcal{E}$-preinvexity, which is defined as follows:

  \begin{definition} \cite{Fulga} \label{definition 2.7 p4}
 	A non empty subset $ {\mathcal{S}}\subseteq  {\mathbb{R}^n}$ is called    $ {\mathcal{E}}$-invex set  w.r.t. $ {\Psi}: {\mathbb{R}^n}\times  {\mathbb{R}^n}\rightarrow {\mathbb{R}^n}$, if for every $\mathit{\zeta} ,\mathit{\delta} \in {\mathcal{S}}$ and $ {\lambda}\in  {[0,1]} $, 
 	$${\mathcal{E}} (\mathit{\delta} )+{\lambda}{\Psi}({\mathcal{E}}(\mathit{\zeta} ),{\mathcal{E}} (\mathit{\delta} ))\in {\mathcal{S}}.$$ 
 \end{definition}
 \begin{definition} \cite{Fulga}\label{definition 2.8 p4}
 	 \noindent
Let $\mathcal{S}$ be an $\mathcal{E}$-invex set. A function $\tilde{h} : \mathcal{S} \rightarrow \mathbb{R}$ is said to be $\mathcal{E}$-preinvex with respect to $\Psi$ on $\mathcal{S}$ if, for all $\zeta, \delta \in \mathcal{S}$ and for every $\lambda \in [0,1]$, the following inequality holds:
\[
\tilde{h}\big(\mathcal{E}(\delta) + \lambda\, \Psi(\mathcal{E}(\zeta), \mathcal{E}(\delta))\big)
\leq \lambda\, \tilde{h}\big(\mathcal{E}(\zeta)\big) + (1 - \lambda)\, \tilde{h}\big(\mathcal{E}(\delta)\big).
\]

 \end{definition}

 \begin{definition}\label{definition 2.9 p4} \cite{Iqbal2022}
  	\noindent
A set $\mathcal{S} \subseteq \mathbb{R}^n$ is said to be \emph{strongly} $\mathcal{E}$-\emph{invex} with respect to a map $\Psi : \mathbb{R}^n \times \mathbb{R}^n \rightarrow \mathbb{R}^n$ if, for all $\zeta, \delta \in \mathcal{S}$ and for every $\alpha, \lambda \in [0,1]$, the following condition holds:
\[
\alpha\, \delta + \mathcal{E}(\delta) + \lambda\, \Psi\big(\alpha\, \zeta + \mathcal{E}(\zeta),\, \alpha\, \delta + \mathcal{E}(\delta)\big) \in \mathcal{S}.
\]
If $\Psi\big(\alpha\, \zeta + \mathcal{E}(\zeta),\, \alpha\, \delta + \mathcal{E}(\delta)\big) = \big(\alpha\, \zeta + \mathcal{E}(\zeta)\big) - \big(\alpha\, \delta + \mathcal{E}(\delta)\big)$, then $\mathcal{S}$ coincides with the notion of a strongly $\mathcal{E}$-convex set (see Definition~\ref{definition 2.5 p4}). Furthermore, setting $\alpha = 0$ reduces $\mathcal{S}$ to an $\mathcal{E}$-invex set (see Definition~\ref{definition 2.7 p4}), when $\alpha = 0$ and $\Psi\big(\mathcal{E}(\zeta),\, \mathcal{E}(\delta)\big) = \mathcal{E}(\zeta) - \mathcal{E}(\delta)$, the concept reduces to the classical $\mathcal{E}$-convex set introduced by Youness~\cite{Youness2001a}.

  \end{definition}

 \begin{lemma}\label{lemma 2.1 p4} \cite{Iqbal2022}
  	\noindent
Whenever a set $\mathcal{S} \subseteq \mathbb{R}^n$ is strongly $\mathcal{E}$-invex relative to the mapping $\Psi : \mathbb{R}^n \times \mathbb{R}^n \to \mathbb{R}^n$, the image of $\mathcal{S}$ under $\mathcal{E}$ is contained within $\mathcal{S}$; that is, $\mathcal{E}(\mathcal{S}) \subseteq \mathcal{S}$.

  \end{lemma}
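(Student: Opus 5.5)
The plan is to specialize the parameters in Definition~\ref{definition 2.9 p4} so that the membership condition collapses to the single statement $\mathcal{E}(\delta)\in\mathcal{S}$. The key observation is that the defining condition has to hold for \emph{every} choice of $\alpha,\lambda\in[0,1]$, including the endpoint $0$. Choosing both parameters equal to $0$ removes $\Psi$ and the term $\alpha\delta$ entirely.

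Concretely, I will fix an arbitrary point $y\in\mathcal{E}(\mathcal{S})$ and write $y=\mathcal{E}(\delta)$ for some $\delta\in\mathcal{S}$. Since $\mathcal{S}$ is nonempty and contains $\delta$, I may take $\zeta=\delta$ (any $\zeta\in\mathcal{S}$ would do). I then apply the strong $\mathcal{E}$-invexity of $\mathcal{S}$ with respect to $\Psi$ for the pair $\zeta,\delta$, with $\alpha=0$ and $\lambda=0$. This gives
\[
0\cdot\delta+\mathcal{E}(\delta)+0\cdot\Psi\big(0\cdot\zeta+\mathcal{E}(\zeta),\,0\cdot\delta+\mathcal{E}(\delta)\big)=\mathcal{E}(\delta)\in\mathcal{S}.
\]
Here I use only that $\Psi$ takes values in $\mathbb{R}^n$, so multiplying by $0$ yields the zero vector whatever $\Psi$ is. Hence $y\in\mathcal{S}$, and since $y$ was arbitrary, $\mathcal{E}(\mathcal{S})\subseteq\mathcal{S}$.

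There is essentially no obstacle. The only point to check is that $\alpha=0$ and $\lambda=0$ are admissible, which holds because the definition quantifies over the closed interval $[0,1]$. No continuity, linearity or other property of $\Psi$ or $\mathcal{E}$ is needed.
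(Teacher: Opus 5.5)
Your proof is correct. Taking $\alpha=\lambda=0$ in Definition~\ref{definition 2.9 p4} gives $\mathcal{E}(\delta)\in\mathcal{S}$ for every $\delta\in\mathcal{S}$, which is exactly the standard argument. The paper itself gives no proof and only quotes the lemma from its cited source.
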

  \begin{lemma}\label{2}  \cite{Iqbal2022}
  	\noindent
Consider a collection $\{\mathcal{S}_i\}_{i \in I}$ of nonempty subsets of $\mathbb{R}^n$, each of which is strongly $\mathcal{E}$-invex with respect to a mapping $\Psi : \mathbb{R}^n \times \mathbb{R}^n \rightarrow \mathbb{R}^n$. Then the intersection of all these sets, denoted by $\mathcal{S} = \bigcap_{i \in I} \mathcal{S}_i$, remains strongly $\mathcal{E}$-invex with respect to the same mapping $\Psi$. 
  \end{lemma}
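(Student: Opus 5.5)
The plan is to verify the defining condition of Definition~\ref{definition 2.9 p4} directly for $\mathcal{S} = \bigcap_{i \in I} \mathcal{S}_i$. The key point is that the mapping $\Psi$ and the map $\mathcal{E}$ are the same for every member of the family. Consequently, the point whose membership must be checked depends only on $\zeta, \delta, \alpha, \lambda$, and not on the index $i$.

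\textbf{Step 1: nonemptiness.} If the intersection is required to be nonempty, I would note this as part of the hypothesis, since the definition presupposes a set and the statement implicitly assumes $\mathcal{S} \neq \emptyset$. Otherwise the condition holds vacuously for the empty set.

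\textbf{Step 2: membership in each $\mathcal{S}_i$.} Fix arbitrary $\zeta, \delta \in \mathcal{S}$ and $\alpha, \lambda \in [0,1]$. Then $\zeta, \delta \in \mathcal{S}_i$ for every $i \in I$. Since each $\mathcal{S}_i$ is strongly $\mathcal{E}$-invex with respect to $\Psi$, we get
\[
w := \alpha\,\delta + \mathcal{E}(\delta) + \lambda\,\Psi\big(\alpha\,\zeta + \mathcal{E}(\zeta),\, \alpha\,\delta + \mathcal{E}(\delta)\big) \in \mathcal{S}_i
\quad \text{for every } i \in I.
\]

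\textbf{Step 3: conclusion.} Since $w$ lies in every $\mathcal{S}_i$, it lies in their intersection, so $w \in \mathcal{S}$. Because $\zeta, \delta, \alpha, \lambda$ were arbitrary, $\mathcal{S}$ is strongly $\mathcal{E}$-invex with respect to $\Psi$.

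There is no real obstacle here. The only point requiring care is that $w$ is a single vector independent of $i$, which holds precisely because $\mathcal{E}$ and $\Psi$ are common to the whole family. If different maps were allowed for different $i$, the argument would fail.
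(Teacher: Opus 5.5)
Your proof is correct. The point $\alpha\delta+\mathcal{E}(\delta)+\lambda\Psi(\alpha\zeta+\mathcal{E}(\zeta),\alpha\delta+\mathcal{E}(\delta))$ does not depend on $i$ and lies in every $\mathcal{S}_i$, hence in $\mathcal{S}$, and an empty intersection satisfies Definition~\ref{definition 2.9 p4} vacuously. The paper gives no proof of its own (it quotes the lemma from \cite{Iqbal2022}), and your direct verification is the standard argument for it.
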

 
\begin{definition}\cite{Tauheed}
    Let \( \nu = (\nu_1, \nu_2, \ldots, \nu_n) \in \mathbb{R}^n \) and 
\( \tilde{\mathcal{U}} = (\mathcal{U}_1, \mathcal{U}_2, \ldots, \mathcal{U}_n) \in \mathcal{I}^n(\mathbb{R}) \).  
Define the index sets
\[
j^+ = \{\, i \mid \nu_i \geq 0 \,\}, \qquad 
j^- = \{\, i \mid \nu_i < 0 \,\}.
\]
Amir et al.~\cite{Tauheed} introduce the $gH$-product as follows,
\[
\langle \cdot , \cdot \rangle_{\textbf{gH}} : \mathbb{R}^n \times \mathcal{I}^n(\mathbb{R}) \longrightarrow \mathcal{I}(\mathbb{R})
\]
given by
\begin{equation*}
\langle \nu , \tilde{\mathcal{U}} \rangle_{\textbf{gH}}
    = \sum_{i \in j^+} \nu_i \mathcal{U}_i 
    \;\ominus_{gH}\;
      \sum_{k \in j^-} |\nu_k| \mathcal{U}_k .
\end{equation*}
is called gH-product of a vector $\nu$ with $\tilde{\mathcal{U}}$

\end{definition}
  
  \section{ \textbf{main results }}
  \justifying
  In this section, we introduce and study three new classes of interval-valued functions: strongly LU $\mathcal{\mathcal{E}}$-preinvex(SLU$\mathcal{E}$P), semi-strongly LU $\mathcal{\mathcal{E}}$-preinvex  and strongly LU $\mathcal{\mathcal{E}}$-invex (SLU$\mathcal{E}$I) functions. To demonstrate the existence and applicability of these concepts, we construct illustrative examples. Furthermore, we explore several fundamental properties and results associated with these functions.

\noindent
Motivated by the work of Iqbal et al. \cite{Iqbal2022}, who examined nonlinear programming problems involving strongly $\mathcal{\mathcal{E}}$-invex sets and strongly $\mathcal{\mathcal{E}}$-preinvex functions, we now define the notion of a SLU$\mathcal{E}$P function.

      \begin{definition} \label{definition 3.1 p3}
      	Let ${\mathcal{S}}\subseteq { \mathbb{R}^{n}}$ be a non empty S$\mathcal{E}I$ set. A function $ \tilde{h}: {\mathcal{S}}\subseteq  \mathbb{R}^{n}\rightarrow  \mathcal{I}(\mathbb{R})$ is said to be SLU$\mathcal{E}$P w.r.t. ${\Psi}$ on ${\mathcal{S}}$, if $\forall~\mathit{\zeta} ,\mathit{\delta} \in  {\mathcal{S}},~{\alpha}\in {[0,1]}~\&~{\lambda} \in {[0,1]} $, we have
      	$$ \tilde{h}\big({\alpha}  \mathit{\delta} + {\mathcal{E}}( \mathit{\delta} )+ {\lambda} {\Psi}( {\alpha}  \mathit{\zeta} + {\mathcal{E}}( \mathit{\zeta} ), {\alpha}  \mathit{\delta} + {\mathcal{E}}( \mathit{\delta} ))\big)\preceq{\lambda}\tilde{h}\big({\mathcal{E}}( \mathit{\zeta} )\big)+(1- {\lambda})\tilde{h}\big({\mathcal{E}} (\mathit{\delta} )\big).$$ 
        In particular, when $\alpha=0$, then $\tilde{h}$ is LU-E-preinvex. Also, if $\mathcal E{(x)}=x$ and $\alpha=0$ then $\tilde{h}$ is LU-preinvex.
      	If the above inequality is strict for all $\mathit{\zeta} ,\mathit{\delta} \in  {\mathcal{S}}$,~$ {\alpha} \mathit{\zeta} + {\mathcal{E}}(\mathit{\zeta} )\neq  {\alpha} \mathit{\delta} + {\mathcal{E}} (\mathit{\delta} ),~\forall~  {\alpha}\in {[0,1]}~\&~{\lambda}\in {(0,1)}$. Then, the function $\tilde{h}$ is called strictly SLU$\mathcal{E}$P.\\

      		\noindent
      For the real valued functions, if $\Psi\big(\alpha \mathit{\zeta} +\mathcal{E}(\mathit{\zeta} ),\alpha \mathit{\delta} +\mathcal{E} (\mathit{\delta} )\big)=\big(\alpha \mathit{\zeta} +\mathcal{E}(\mathit{\zeta} )-(\alpha \mathit{\delta} + \mathcal{E}(\mathit{\delta} ))\big)$, then the function $\tilde{h}$ reduces to a strongly-$\mathcal{E}$-convex (S$\mathcal{E}$C), defined in Definition \ref{defintion 2.6 p4}. 
      \end{definition}

\begin{definition}
   Let $\mathcal{S} \subseteq \mathbb{R}^{n}$ be a nonempty S$\mathcal{\mathcal{E}}$I set.  
A function $\tilde{h} \colon \mathbb{R}^{n} \to \mathcal{I}(\mathbb{R})$ is called semi strongly LU-$\mathcal{\mathcal{E}}$-preinvex (SSLU$\mathcal{\mathcal{E}}$P) w.r.t. $\Psi$ on $\mathcal{S}$, if  
\begin{equation*}
    \tilde{h}(\alpha \mathit{\delta}  + \mathcal{\mathcal{E}}(\mathit{\delta} ) + \lambda \Psi(\alpha \mathit{\zeta}  + \mathcal{\mathcal{E}}(\mathit{\zeta} ), \alpha \mathit{\delta}  + \mathcal{\mathcal{E}}(\mathit{\delta} ))) 
    \preceq \lambda \tilde{h}(\mathit{\zeta} ) + (1 - \lambda) \tilde{h}(\mathit{\delta} ),
\end{equation*}
$\forall$  $\mathit{\zeta} , \mathit{\delta}  \in \mathcal{S}$ and $\alpha, \lambda \in [0,1]$. 
\end{definition}
In particular, when $\alpha=0$, then $\tilde{h}$ is semi LU-E-preinvex. Also, if $\mathcal E{(x)}=x$ and $\alpha=0$ then $\tilde{h}$ is semi LU-preinvex.\\
We now present an example of a function that satisfies the conditions of a SLUEP function but does not qualify as a  SSLU$\mathcal{E}$P function.

      \begin{example} \label{example3.1}
Let $\tilde{h}:\mathbb{R} \rightarrow{\mathcal{I }(\mathbb{R})}$ be an IVF defined as follows:
\[
\tilde{h}(\mathit{\zeta} ) =
\begin{cases} 
    [0,1] & \text{if } \mathit{\zeta}  > 0 \\
     [\mathit{\zeta} , \mathit{\zeta}  + 1] & \text{if } \mathit{\zeta}  \leq 0
\end{cases}
\]

and let \( \mathcal{\mathcal{E}}: \mathbb{R} \to \mathbb{R} \) be a map defined as

\[
\mathcal{\mathcal{E}}(\mathit{\zeta} ) = |\mathit{\zeta} |
\]

Also, let \( \Psi : \mathbb{R} \times \mathbb{R} \to \mathbb{R} \) be defined as

\[
\Psi(\mathit{\zeta} , \mathit{\delta} ) =
\begin{cases}
    \mathit{\zeta}  + \mathit{\delta} , & \text{if } \mathit{\zeta} , \mathit{\delta}  \geq 0 ~\text{ or }~ \mathit{\zeta} , \mathit{\delta}  \leq 0 \\
    -\mathit{\delta} , & \text{otherwise}
\end{cases}
\]

IVF \( \tilde{h} \) is SLU\(\mathcal{\mathcal{E}}\)P  but it is not SSLU\(\mathcal{\mathcal{E}}\)P.\\
\textbf{Case I}: If \( \mathit{\zeta}  > 0 \) and \( \mathit{\delta}  < 0 \), then
\begin{align*}
    \tilde{h} \left( \alpha \mathit{\delta}  + \mathcal{\mathcal{E}}(\mathit{\delta} ) + \lambda\Psi(\alpha \mathit{\zeta}  + \mathcal{\mathcal{E}}(\mathit{\zeta} ), \alpha \mathit{\delta}  + \mathcal{\mathcal{E}}(\mathit{\delta} )) \right) &\preceq \lambda \tilde{h}(\mathcal{E}(\mathit{\zeta} )) + (1 - \lambda) \tilde{h}(\mathcal{E}(\mathit{\delta} )), \\
    \tilde{h} (\alpha\mathit{\delta}  - \mathit{\delta}  + \lambda\Psi(\alpha\mathit{\zeta} +\mathit{\zeta} , \alpha\mathit{\delta} -\mathit{\delta} )) &\preceq \lambda \tilde{h}(\mathit{\zeta} ) + (1 - \lambda) \tilde{h}(-\mathit{\delta} ), \\
    \tilde{h} (\alpha\mathit{\delta}  - \mathit{\delta}  + \lambda(\alpha\mathit{\zeta} +\mathit{\zeta} +\alpha\mathit{\delta}  - \mathit{\delta}  ) &\preceq \lambda[0,1]+(1-\lambda)[0,1] \} \\ 
    \tilde{h} (-\mathit{\delta} (1-\alpha)+\lambda((1+\alpha)\mathit{\zeta} -\mathit{\delta}  (1-\alpha)  &\preceq [0,1] \\
    [0,1] &\preceq [0,1]\\
\end{align*}
\noindent
\textbf{Case II}: If \( \mathit{\zeta}  < 0 \) and \( \mathit{\delta}  > 0 \), then
\begin{align*}
    \tilde{h} \left( \alpha \mathit{\delta}  + \mathcal{\mathcal{E}}(\mathit{\delta} ) + \lambda\Psi(\alpha \mathit{\zeta}  + \mathcal{\mathcal{E}}(\mathit{\zeta} ), \alpha \mathit{\delta}  + \mathcal{\mathcal{E}}(\mathit{\delta} )) \right) &\preceq \lambda \tilde{h}(\mathcal{E}(\mathit{\zeta} )) + (1 - \lambda) \tilde{h}(\mathcal{E}(\mathit{\delta} )), \\
    \tilde{h} (\alpha\mathit{\delta}  + \mathit{\delta}  + \lambda\Psi(\alpha\mathit{\zeta} -\mathit{\zeta} , \alpha\mathit{\delta} +\mathit{\delta} )) &\preceq \lambda \tilde{h}(-\mathit{\zeta} ) + (1 - \lambda) \tilde{h}(\mathit{\delta} ), \\
    \tilde{h} (\alpha\mathit{\delta}  + \mathit{\delta}  + \lambda(\alpha\mathit{\zeta} -\mathit{\zeta} +\alpha\mathit{\delta}  + \mathit{\delta}  ) &\preceq \lambda[0,1]+(1-\lambda)[0,1] \} \\ 
    \tilde{h} (\mathit{\delta} (1+\alpha)+\lambda(-(1-\alpha)\mathit{\zeta} +\mathit{\delta}  (1+\alpha)  &\preceq [0,1] \\
    [0,1] &\preceq [0,1]\\
\end{align*}
\noindent
Similarly we can check for the rest of the cases.\\
Therefore, it is SLU\(\mathcal{\mathcal{E}}\)P. Moreover it is not SSLU\(\mathcal{\mathcal{E}}\)P.\\
Particularly at the points, $\mathit{\zeta} =0,\mathit{\delta} =-1,\alpha=\frac{1}{2}$ and $\lambda=0$, we get 
\begin{equation*}
    \begin{aligned}
         \tilde{h} \left( \alpha \mathit{\delta}  + \mathcal{\mathcal{E}}(\mathit{\delta} ) + \lambda\Psi(\alpha \mathit{\zeta}  + \mathcal{\mathcal{E}}(\mathit{\zeta} ), \alpha \mathit{\delta}  + \mathcal{\mathcal{E}}(\mathit{\delta} )) \right)&=\tilde{h}(-\frac{1}{2}+1+0)\\
         &=\tilde{h}(\frac{1}{2})\\
         &=[0,1]\\
    \end{aligned}
\end{equation*}
\begin{equation*}
    \begin{aligned}
        \lambda \tilde{h}(\mathit{\zeta} ) + (1 - \lambda) \tilde{h}(\mathit{\delta} )&=0+\tilde{h}(-1)\\
        &=[-1,0]
    \end{aligned}
\end{equation*}
\begin{equation*}
    \tilde{h}(\alpha \mathit{\delta}  + \mathcal{\mathcal{E}}(\mathit{\delta} ) + \lambda \Psi(\alpha \mathit{\zeta}  + \mathcal{\mathcal{E}}(\mathit{\zeta} ), \alpha \mathit{\delta}  + \mathcal{\mathcal{E}}(\mathit{\delta} ))) 
    \npreceq \lambda \tilde{h}(\mathit{\zeta} ) + (1 - \lambda) \tilde{h}(\mathit{\delta} ),
\end{equation*}

\begin{figure}[H]
		\centering
		\includegraphics[width=0.9\linewidth]{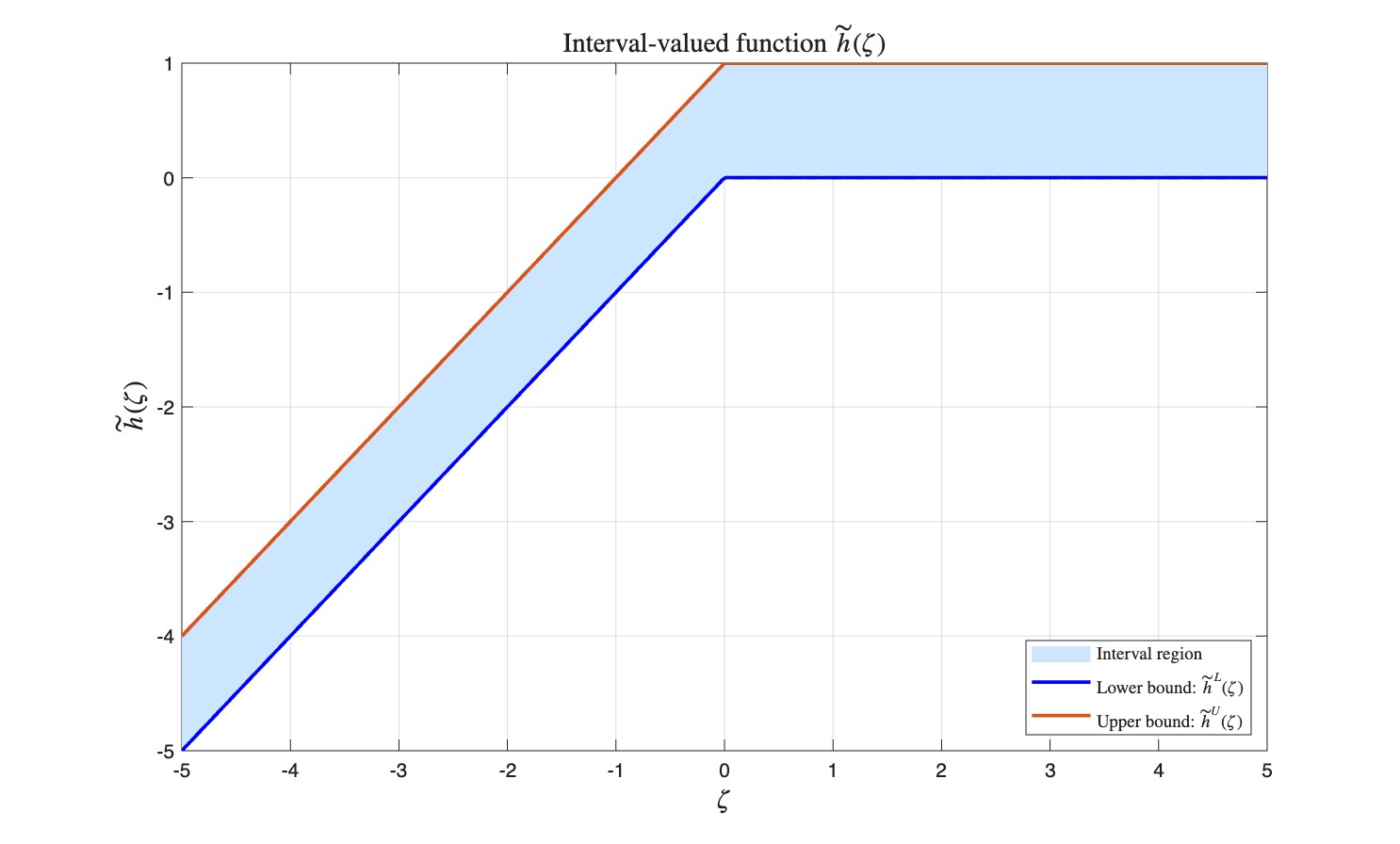}
		\label{fig:placeholder}
		\caption{ $\tilde{h}$ is SLU$\mathcal{E}$P but not SSLU\(\mathcal{\mathcal{E}}\)P with respect to the mappings $\mathcal{E}$ and $\Psi$ given in Example~\ref{example3.1}.
}
\end{figure}
\end{example}

The example below shows that an SSLU$\mathcal{E}$P function does not necessarily imply  SLU$\mathcal{E}$P function.

\begin{example} \label{example3.2}
   Let $\tilde{h} : \mathbb{R} \to \mathcal{I}(\mathbb{R})$ be an IVF defined by
\[
\tilde{h}(\zeta) = [\zeta,\, \zeta + 1].
\]
Let $\mathcal{E} : \mathbb{R} \to \mathbb{R}$ be a mapping defined by $\mathcal{E}(\zeta) = -\zeta$
\

Also, define $\Psi : \mathbb{R} \times \mathbb{R} \to \mathbb{R}$ as $\Psi(\zeta, \delta) = \zeta - \delta.$

IVF \( \tilde{h} \) is SSLU\(\mathcal{\mathcal{E}}\)P  but it is not SLU\(\mathcal{\mathcal{E}}\)P.\\

To show that IVF \( \tilde{h} \) is SSLU\(\mathcal{\mathcal{E}}\)P, see the following steps: 
\begin{equation*}
    \begin{aligned}
        \tilde{h}(\alpha \mathit{\delta}  + \mathcal{\mathcal{E}}(\mathit{\delta} ) + \lambda \Psi(\alpha \mathit{\zeta}  + \mathcal{\mathcal{E}}(\mathit{\zeta} ), \alpha \mathit{\delta}  + \mathcal{\mathcal{E}}(\mathit{\delta} ))) 
    &\preceq \lambda \tilde{h}(\mathit{\zeta} ) + (1 - \lambda) \tilde{h}(\mathit{\delta} )\\
    \tilde{h}(\alpha \mathit{\delta} -\mathit{\delta}  + \lambda \Psi(\alpha \mathit{\zeta}  -\mathit{\zeta} , \alpha \mathit{\delta} -\mathit{\delta} ) 
    &\preceq \lambda [\mathit{\zeta} ,\mathit{\zeta} +1] + (1 - \lambda)[\mathit{\delta} ,\mathit{\delta} +1]\\
    \tilde{h}(\alpha \mathit{\delta} -\mathit{\delta}  + \lambda(\alpha \mathit{\zeta}  -\mathit{\zeta} -\alpha \mathit{\delta} +\mathit{\delta} ) 
    &\preceq  [\lambda\mathit{\zeta} +(1 - \lambda)\mathit{\delta} ,\lambda\mathit{\zeta} +(1 - \lambda)\mathit{\delta} +1]\\
    \tilde{h}((\alpha-1)( \lambda\mathit{\zeta} +(1 - \lambda)\mathit{\delta} ))
    &\preceq  [\lambda\mathit{\zeta} +(1 - \lambda)\mathit{\delta} ,\lambda\mathit{\zeta} +(1 - \lambda)\mathit{\delta} +1]\\
     [(\alpha-1)(\lambda\mathit{\zeta} +(1 - \lambda)\mathit{\delta} ),(\alpha-1)(\lambda\mathit{\zeta} +(1 - \lambda)\mathit{\delta} )+1]
     &\preceq  [\lambda\mathit{\zeta} +(1 - \lambda)\mathit{\delta} ,\lambda\mathit{\zeta} +(1 - \lambda)\mathit{\delta} +1]
    \end{aligned}
\end{equation*}
Therfore $\tilde{h}$ is SSLU$\mathcal{\mathcal{E}}$P, but it is not SLU$\mathcal{\mathcal{E}}$P.\\
Particularly at the points,  $\mathit{\zeta} =0,\mathit{\delta} =1,\alpha=\frac{1}{2}$ and $\lambda=\frac{1}{2}$, we get 
\begin{equation*}
    \begin{aligned}
         \tilde{h}(\alpha \mathit{\delta}  + \mathcal{\mathcal{E}}(\mathit{\delta} ) + \lambda \Psi(\alpha \mathit{\zeta}  + \mathcal{\mathcal{E}}(\mathit{\zeta} ), \alpha \mathit{\delta}  + \mathcal{\mathcal{E}}(\mathit{\delta} )))
         &= \tilde{h}(\frac{1}{2}-1 + \frac{1}{2}\Psi(0, \frac{1}{2}-1))\\
         &=\tilde{h}(-\frac{1}{2} + \frac{1}{2}\Psi(0, -\frac{1}{2}))\\
         &=\tilde{h}(-\frac{1}{2} + \frac{1}{4})\\
         &=\tilde{h}(- \frac{1}{4})\\
         &=[- \frac{1}{4}, \frac{3}{4}]
    \end{aligned}
\end{equation*}
\begin{equation*}
    \begin{aligned}
        \lambda \tilde{h}(\mathcal{E}(\mathit{\zeta} )) + (1 - \lambda) \tilde{h}(\mathcal{E}(\mathit{\delta} ))
        &=\frac{1}{2}\tilde{h}(0)+\frac{1}{2}\tilde{h}(-1)\\
        &=\frac{1}{2}[0,1]+\frac{1}{2}[-1,0]\\
        &=[0,\frac{1}{2}]+[-\frac{1}{2},0]\\
        &=[-\frac{1}{2},\frac{1}{2}]
    \end{aligned}
\end{equation*}
 \begin{equation*}
     \tilde{h} \left( \alpha \mathit{\delta}  + \mathcal{\mathcal{E}}(\mathit{\delta} ) + \lambda\Psi(\alpha \mathit{\zeta}  + \mathcal{\mathcal{E}}(\mathit{\zeta} ), \alpha \mathit{\delta}  + \mathcal{\mathcal{E}}(\mathit{\delta} )) \right) \npreceq \lambda \tilde{h}(\mathcal{E}(\mathit{\zeta} )) + (1 - \lambda) \tilde{h}(\mathcal{E}(\mathit{\delta} ))
 \end{equation*}

\begin{figure}[H]
		\centering
		\includegraphics[width=0.8\linewidth]{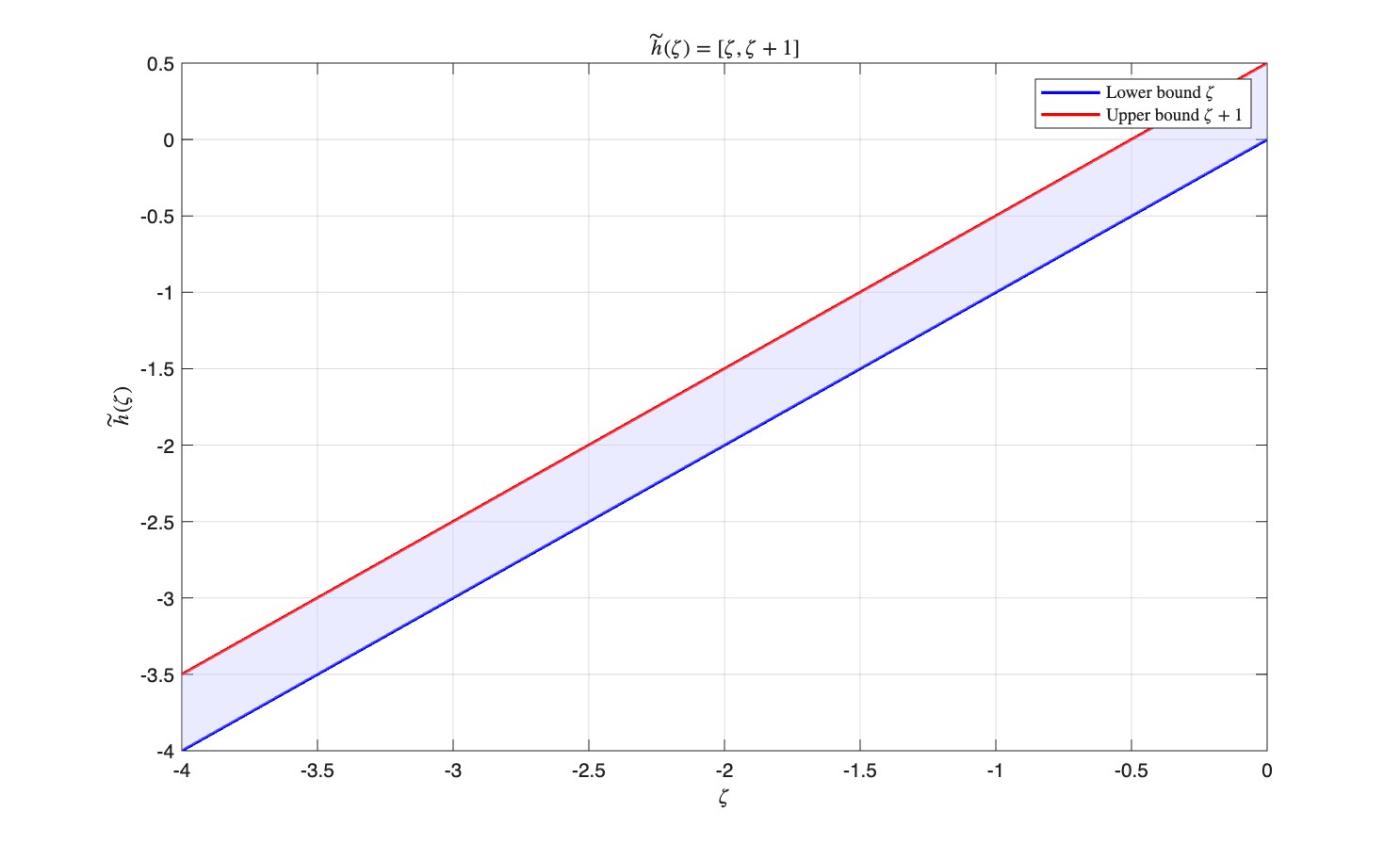}
		\label{fig:placeholder}
		\caption{ $\tilde{h}$ is SSLU$\mathcal{E}$P but not SLU$\mathcal{E}$P with respect to the mappings $\mathcal{E}$ and $\Psi$ given in Example~\ref{example3.2}.
}
\end{figure}

\end{example}
 
\begin{theorem} \label{theorem 3.1 p4}
    Suppose $\mathcal{S}$ is a S$\mathcal{\mathcal{E}}$I subset of $\mathbb{R}^n$ with respect to $\Psi : \mathcal{S} \times \mathcal{S} \rightarrow \mathbb{R}^n$, and let $\tilde{h}$ be an IVF. Then $\tilde{h}$ is SLU$\mathcal{\mathcal{E}}$P iff both endpoint functions $\tilde{h}^L$ and $\tilde{h}^U$ are S$\mathcal{\mathcal{E}}$P with respect to $\Psi$, i.e.,
\begin{equation}
   \begin{aligned}
       \tilde{h}^L\left(\alpha \mathit{\delta}  + \mathcal{\mathcal{E}}(\mathit{\delta} ) + \lambda \Psi(\alpha \mathit{\zeta}  + \mathcal{\mathcal{E}}(\mathit{\zeta} ), \alpha \mathit{\delta}  + \mathcal{\mathcal{E}}(\mathit{\delta} )))\right) 
       \leq \lambda \tilde{h}^L(\mathcal{\mathcal{E}}(\mathit{\zeta} )) + (1 - \lambda) \tilde{h}^L(\mathcal{\mathcal{E}}(\mathit{\delta} )),
   \end{aligned}
\end{equation}

\begin{equation}
    \begin{aligned}
        \tilde{h}^U\left(\alpha \mathit{\delta}  + \mathcal{\mathcal{E}}(\mathit{\delta} ) + \lambda \Psi(\alpha \mathit{\zeta}  + \mathcal{\mathcal{E}}(\mathit{\zeta} ), \alpha \mathit{\delta}  + \mathcal{\mathcal{E}}(\mathit{\delta} )))\right) 
        \leq \lambda \tilde{h}^U(\mathcal{\mathcal{E}}(\mathit{\zeta} )) + (1 - \lambda) \tilde{h}^U(\mathcal{\mathcal{E}}(\mathit{\delta} ))
    \end{aligned}
\end{equation}
for every $\lambda \in [0,1]$ and $\mathit{\zeta} , \mathit{\delta}  \in \mathcal{S}$.
\begin{proof}
     The conclusion is an immediate consequence of Definition~\ref{definition 3.1 p3}.
\end{proof}
\end{theorem}

     \begin{theorem}
    		If $\tilde{h}:{\mathcal{S}}\subseteq \mathbb{R}^{n}\rightarrow \mathcal{I}(\mathbb{R})$ is SLU$\mathcal{E}$P function on a S$\mathcal{E}I$ set ${\mathcal{S}}$, then $\tilde{h}\big({\alpha}\mathit{\delta} + {\mathcal{E}} (\mathit{\delta} )\big)\preceq\tilde{h}\big({\mathcal{E}} (\mathit{\delta} )\big)$, $\forall~\mathit{\delta} \in {\mathcal{S}}~\&~ {\alpha}\in {[0,1]}$.  
    \end{theorem}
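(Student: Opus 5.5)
Specialize the defining inequality of Definition~\ref{definition 3.1 p3} by choosing $\lambda = 0$, which removes the $\Psi$-term and leaves exactly the claimed inequality. No structural assumption on $\Psi$ is needed, because $\lambda\Psi(\cdot,\cdot)$ vanishes identically at $\lambda=0$, whatever value $\Psi$ takes.

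Concretely, fix $\delta \in \mathcal{S}$ and $\alpha \in [0,1]$. Take $\zeta = \delta$, or any $\zeta \in \mathcal{S}$, since $\zeta$ only enters through the $\lambda$-weighted terms. First I would note that the argument $\alpha\delta + \mathcal{E}(\delta)$ lies in $\mathcal{S}$. This follows from Definition~\ref{definition 2.9 p4} with $\lambda = 0$, so $\tilde{h}$ is evaluated inside its domain. Likewise $\mathcal{E}(\delta) \in \mathcal{S}$ by Lemma~\ref{lemma 2.1 p4}, so $\tilde{h}(\mathcal{E}(\delta))$ is defined.

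Then put $\lambda = 0$ in the SLU$\mathcal{E}$P inequality. The left side becomes $\tilde{h}\big(\alpha\delta + \mathcal{E}(\delta)\big)$, because $0\cdot\Psi(\cdot,\cdot) = 0 \in \mathbb{R}^n$. The right side becomes $0\cdot\tilde{h}(\mathcal{E}(\zeta)) + 1\cdot\tilde{h}(\mathcal{E}(\delta))$. By the scalar multiplication rule for intervals with $k = 0 \ge 0$, we have $0\cdot\tilde{h}(\mathcal{E}(\zeta)) = [0,0]$, and adding $[0,0]$ leaves $\tilde{h}(\mathcal{E}(\delta))$ unchanged. This gives $\tilde{h}(\alpha\delta + \mathcal{E}(\delta)) \preceq \tilde{h}(\mathcal{E}(\delta))$.

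Alternatively, one can argue endpoint-wise through Theorem~\ref{theorem 3.1 p4}. There the same substitution yields $\tilde{h}^L(\alpha\delta+\mathcal{E}(\delta)) \le \tilde{h}^L(\mathcal{E}(\delta))$ and the analogous inequality for $\tilde{h}^U$, which together are precisely the LU order. There is no real obstacle here. The only point requiring a sentence of care is the interval-arithmetic identity $0\cdot\mathcal{U} + \mathcal{V} = \mathcal{V}$, which is immediate from the definitions in the preliminaries.
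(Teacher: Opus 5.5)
Your proposal is correct and matches the paper's proof, which consists only of setting $\lambda = 0$ in the SLU$\mathcal{E}$P inequality. Your extra remarks, that $\alpha\delta+\mathcal{E}(\delta)\in\mathcal{S}$ and $\mathcal{E}(\delta)\in\mathcal{S}$ (via the S$\mathcal{E}$I property and Lemma~\ref{lemma 2.1 p4}) and that $0\cdot\mathcal{U}+\mathcal{V}=\mathcal{V}$, make explicit details the paper leaves implicit.
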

    \begin{proof}
    	
    	If ${\lambda}=0$, then we get the desired result.   
    \end{proof}
    \begin{lemma}\label{lemma 3.1 p4}[Preservation of LU Order under Non-Negative Linear Combinations]
Let $\nu_1, \nu_2 \geq 0$ and let $\mathcal{I}_1, \mathcal{I}_2, \mathcal{I}_3, \mathcal{I}_4 \in \mathcal{I}(\mathbb{R})$ be intervals such that
\[
\mathcal{I}_1 \preceq \mathcal{I}_2 \quad \text{and} \quad \mathcal{I}_3 \preceq \mathcal{I}_4.
\]
Then,
\[
\nu_1 \mathcal{I}_1 + \nu_2 \mathcal{I}_3 \preceq \nu_1 \mathcal{I}_2 + \nu_2 \mathcal{I}_4.
\]
\end{lemma}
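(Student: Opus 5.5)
The plan is to reduce everything to endpoint inequalities, using the definition of the order $\preceq$ and the rule for scalar multiplication of intervals. Write $\mathcal{I}_k=[a_k^L,a_k^U]$ for $k=1,\dots,4$. The hypotheses $\mathcal{I}_1\preceq\mathcal{I}_2$ and $\mathcal{I}_3\preceq\mathcal{I}_4$ then mean exactly
\[
a_1^L\le a_2^L,\quad a_1^U\le a_2^U,\quad a_3^L\le a_4^L,\quad a_3^U\le a_4^U .
\]

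Because $\nu_1,\nu_2\ge 0$, scalar multiplication falls under the first branch of its defining rule. This gives $\nu_1\mathcal{I}_1=[\nu_1a_1^L,\nu_1a_1^U]$, and similarly for the other three products; in particular no endpoints are swapped. Interval addition acts endpointwise, so
\[
\nu_1\mathcal{I}_1+\nu_2\mathcal{I}_3=[\nu_1a_1^L+\nu_2a_3^L,\ \nu_1a_1^U+\nu_2a_3^U],
\qquad
\nu_1\mathcal{I}_2+\nu_2\mathcal{I}_4=[\nu_1a_2^L+\nu_2a_4^L,\ \nu_1a_2^U+\nu_2a_4^U].
\]

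Next, multiply the real inequalities by the nonnegative numbers $\nu_1,\nu_2$ and add them. This yields $\nu_1a_1^L+\nu_2a_3^L\le\nu_1a_2^L+\nu_2a_4^L$ for the lower endpoints and the analogous inequality for the upper endpoints. By the definition of $\preceq$, these two inequalities are exactly the claim.

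The argument has no real obstacle. The only point needing care is the sign condition $\nu_i\ge0$: it is what justifies using the first branch of scalar multiplication and keeps each interval's endpoints in order. I will state this explicitly, since for a negative scalar the endpoints would swap and the conclusion could fail.
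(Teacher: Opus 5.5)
Your proposal is correct and matches the paper's approach: the paper simply says the result follows directly from the definitions. Your endpoint computation, using the nonnegative branch of scalar multiplication, endpointwise addition, and the endpointwise definition of $\preceq$, is exactly that argument written out.
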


\begin{proof}
 The result follows directly from the definitions.
\end{proof}

    \begin{theorem}
    	Let $ {\mathcal{S}}\subseteq  \mathbb{R}^{n}$ be a S$\mathcal{E}I$ set. Suppose $ \tilde{h}_j: {\mathcal{S}}\subseteq  \mathbb{R}^{n}\rightarrow  \mathcal{I}(\mathbb{R})$,~$1\leq j\leq m$, are  SLU$\mathcal{E}$P  on $ {\mathcal{S}}$, then linear combination of  these functions is also preserves the SLU$\mathcal{E}$P property, i.e. for $\nu_j\geq0,~1\leq j\leq m $, then the function $${g}=\sum\limits_{j=1}^{m}\nu_j  \tilde{h}_j$$ is a SLU$\mathcal{E}$P on ${\mathcal{S}}$.  
    \end{theorem}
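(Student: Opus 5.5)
The plan is to verify the defining inequality of Definition~\ref{definition 3.1 p3} for $g$ directly, by combining the inequalities for each $\tilde{h}_j$ through repeated use of Lemma~\ref{lemma 3.1 p4}. Fix $\zeta,\delta\in\mathcal{S}$ and $\alpha,\lambda\in[0,1]$, and write
\[
w=\alpha\delta+\mathcal{E}(\delta)+\lambda\Psi\big(\alpha\zeta+\mathcal{E}(\zeta),\alpha\delta+\mathcal{E}(\delta)\big).
\]
Since $\mathcal{S}$ is an S$\mathcal{E}$I set, $w\in\mathcal{S}$. By Lemma~\ref{lemma 2.1 p4}, $\mathcal{E}(\zeta),\mathcal{E}(\delta)\in\mathcal{S}$, so every interval below is defined.

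\textbf{Step 1.} For each $j$, the SLU$\mathcal{E}$P hypothesis gives
\[
\tilde{h}_j(w)\preceq \lambda\tilde{h}_j(\mathcal{E}(\zeta))+(1-\lambda)\tilde{h}_j(\mathcal{E}(\delta)).
\]

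\textbf{Step 2.} Since $\nu_j\ge 0$, apply Lemma~\ref{lemma 3.1 p4} inductively in $m$ to get
\[
g(w)=\sum_j\nu_j\tilde{h}_j(w)\preceq \sum_j\nu_j\Big(\lambda\tilde{h}_j(\mathcal{E}(\zeta))+(1-\lambda)\tilde{h}_j(\mathcal{E}(\delta))\Big).
\]
One could instead argue via Theorem~\ref{theorem 3.1 p4} and componentwise real inequalities. This works because, for nonnegative scalars, $(\sum_j\nu_j\tilde{h}_j)^L=\sum_j\nu_j\tilde{h}_j^L$, and likewise for the upper endpoints.

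\textbf{Step 3.} Rearrange the right-hand side into $\lambda g(\mathcal{E}(\zeta))+(1-\lambda)g(\mathcal{E}(\delta))$. This uses only that, for nonnegative scalars $a,b$ and intervals $U,V$, interval arithmetic satisfies $a(U+V)=aU+aV$, $(ab)U=a(bU)$, and commutativity and associativity of addition. All of these hold because every scalar involved ($\nu_j$, $\lambda$, $1-\lambda$) is nonnegative, so both endpoints are scaled without swapping.

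The only point needing care is Step 3. Interval arithmetic is not a vector space: distributivity $(a+b)U=aU+bU$ fails when $a,b$ have opposite signs, and negative scalars swap endpoints. I will therefore state explicitly that all coefficients are nonnegative. The cleanest route is to pass to endpoints: by Theorem~\ref{theorem 3.1 p4}, it suffices to show $g^L=\sum_j\nu_j\tilde{h}_j^L$ and $g^U=\sum_j\nu_j\tilde{h}_j^U$ are S$\mathcal{E}$P. That follows at once from the real-valued inequalities for $\tilde{h}_j^L$ and $\tilde{h}_j^U$, multiplied by $\nu_j\ge0$ and summed.
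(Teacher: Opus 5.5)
Your proposal is correct and follows essentially the same route as the paper: apply the SLU$\mathcal{E}$P inequality to each $\tilde{h}_j$, combine the inequalities with the nonnegative weights $\nu_j$ via Lemma~\ref{lemma 3.1 p4}, and regroup the right-hand side as $\lambda g(\mathcal{E}(\zeta))+(1-\lambda)g(\mathcal{E}(\delta))$. Your extra remarks make explicit two points the paper leaves implicit: that $\mathcal{E}(\zeta),\mathcal{E}(\delta)\in\mathcal{S}$ by Lemma~\ref{lemma 2.1 p4}, and that the regrouping uses only interval-arithmetic identities valid for nonnegative scalars.
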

    
    \begin{proof}
    	Since $ \tilde{h}_j,~1\leq j\leq m$, are SLU$\mathcal{E}$P functions on S$\mathcal{E}I$ set ${\mathcal{S}}$, then   $\forall~\mathit{\zeta} ,\mathit{\delta} \in  {\mathcal{S}}$ and $ {\alpha},{\lambda}\in {[0,1]} $, we have\\
    	\begin{equation*}
    		 {\alpha} \mathit{\delta} + {\mathcal{E}} (\mathit{\delta} )+ {\lambda} {\Psi}\big( {\alpha} \mathit{\zeta} + {\mathcal{E}}(\mathit{\zeta} ), {\alpha} \mathit{\delta} + {\mathcal{E}} (\mathit{\delta} )\big)\in{\mathcal{S}},
    	\end{equation*}
        and
        \begin{equation*}
             \tilde{h}_j \left( \alpha \mathit{\delta}  + \mathcal{\mathcal{E}}(\mathit{\delta} ) + \lambda\Psi(\alpha \mathit{\zeta}  + \mathcal{\mathcal{E}}(\mathit{\zeta} ), \alpha \mathit{\delta}  + \mathcal{\mathcal{E}}(\mathit{\delta} )) \right) \preceq \lambda \tilde{h}_j(\mathcal{E}(\mathit{\zeta} )) + (1 - \lambda) \tilde{h}_j(\mathcal{E}(\mathit{\delta} )).
        \end{equation*}
   Multiplication by $\nu_j \geq 0$ preserves the LU order. By the Lemma \ref{lemma 3.1 p4} on non-negative linear combinations, summing yields:\\
    ${g}\big({\alpha}\mathit{\delta} + {\mathcal{E}} (\mathit{\delta} )+ {\lambda} {\Psi}( {\alpha}\mathit{\zeta} +{\mathcal{E}}(\mathit{\zeta} ), {\alpha}\mathit{\delta} + {\mathcal{E}} (\mathit{\delta} ))\big)$
    	\begin{equation*}
    	    \begin{aligned}
    	        \hspace{3cm} &=\sum\limits_{j=1}^{m}\nu_j\tilde{h}_j\big( {\alpha}\mathit{\delta} + {\mathcal{E}} (\mathit{\delta} )+ {\lambda} {\Psi}( {\alpha}\mathit{\zeta} + {\mathcal{E}}(\mathit{\zeta} ), {\alpha}\mathit{\delta} + {\mathcal{E}} (\mathit{\delta} ))\big)\\
    		&\preceq  {\lambda} \sum\limits_{j=1}^{m}\nu_j\tilde{h}_j\big( {\mathcal{E}}(\mathit{\zeta} )\big)+(1- {\lambda})  
    		\sum\limits_{j=1}^{m}\nu_j\tilde{h}_j\big({\mathcal{E}} (\mathit{\delta} )\big),\\
    		&= {\lambda} {g}\big({\mathcal{E}}(\mathit{\zeta} )\big)+(1-{\lambda}){g}\big({\mathcal{E}} (\mathit{\delta} )\big).
    	    \end{aligned}
    	\end{equation*}
    	 Thus, the function ${g}$ is SLU$\mathcal{E}$P on $ {\mathcal{S}}$.
    \end{proof}
     
      The concept of prepseudoinvex functions was initially introduced by Jeyakumar \cite{Jeyakumar}. Later, Iqbal and Hussain \cite{Iqbal2022} extended this idea by defining strongly pseudo $\mathcal{\mathcal{E}}$-preinvex functions on S$\mathcal{E}$I sets. Building upon their work, we now propose a new class of interval-valued functions called pseudo strongly LU-$\mathcal{\mathcal{E}}$-preinvex (PSLU$\mathcal{E}$P) functions, as defined below.

       \begin{definition}
     	Let $ {\mathcal{S}}\subseteq  \mathbb{R}^{n}$ be a S$\mathcal{E}$I set. A function $ \tilde{h}: {\mathcal{S}}\subseteq  \mathbb{R}^{n}\rightarrow  \mathcal{I}(\mathbb{R})$ is called PSLU$\mathcal{E}$P with respect to $ {\Psi}$ on $ {\mathcal{S}}$, if $\exists$
  a strictly positive function $ \mathfrak{\Phi}: \mathbb{R}^{n}\times \mathbb{R}^{n}\rightarrow  \mathcal{I}(\mathbb{R})$ such that 
     	
     	\noindent
     	$\tilde{h}\big({\mathcal{E}}(\mathit{\zeta} )\big)\prec\tilde{h}\big({\mathcal{E}} (\mathit{\delta} )\big)$ $\implies$ 
     	
     	\noindent
     	$\tilde{h}\big({\alpha}\mathit{\delta} +{\mathcal{E}} (\mathit{\delta} )+{\lambda}{\Psi}( {\alpha}\mathit{\zeta} +{\mathcal{E}}(\mathit{\zeta} ), {\alpha}\mathit{\delta} +{\mathcal{E}} (\mathit{\delta} ))\big)\preceq\tilde{h}\big({\mathcal{E}} (\mathit{\delta} )\big)+ {\lambda}({\lambda}-1)\mathfrak{\Phi}\big({\mathcal{E}}(\mathit{\zeta} ),{\mathcal{E}} (\mathit{\delta} )\big),$ $\forall~\mathit{\zeta} ,\mathit{\delta}  \in  {\mathcal{S}}, {\alpha}\in {[0,1]}~\&~{\lambda} \in {[0,1]}.$\\
     	
     	\noindent
     	For $\alpha=0$ the IVF $\tilde{h}$ is called PLU$\mathcal{E}$P.\\
        For the real valued function, if  $\alpha=0$ it reduces to pseudo-$\mathcal{E}$-preinvex (P$\mathcal{E}$P) function defined by Iqbal et al. \cite{Iqbal2022}
     \end{definition}

      The following theorem establishes a connection between  SLU$\mathcal{E}$P functions and PSLU$\mathcal{E}$P functions.

    \begin{theorem} \label{theorem 3.4 p4}
    	Let ${\mathcal{S}}\subseteq\mathbb{R}^{n}$ be a S$\mathcal{E}I$ set and 
       $\tilde{h}:{\mathcal{S}}\subseteq\mathbb{R}^{n}\rightarrow  \mathcal{I}(\mathbb{R})$ be a SLU$\mathcal{E}$P function w.r.t. ${\Psi}$ on ${\mathcal{S}}$. Then, $\tilde{h}$ is PSLU$\mathcal{E}$P function on ${\mathcal{S}}$.
    \end{theorem}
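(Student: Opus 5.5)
We fix $\zeta,\delta\in\mathcal{S}$ with $\tilde{h}(\mathcal{E}(\zeta))\prec\tilde{h}(\mathcal{E}(\delta))$ and must produce a strictly positive interval-valued $\mathfrak{\Phi}$ satisfying the PSLU$\mathcal{E}$P inequality for every $\alpha,\lambda\in[0,1]$. The plan is to start from the SLU$\mathcal{E}$P inequality of Definition~\ref{definition 3.1 p3} and rewrite its right-hand side. We write $A=\tilde{h}(\mathcal{E}(\zeta))=[a^L,a^U]$ and $B=\tilde{h}(\mathcal{E}(\delta))=[b^L,b^U]$, so that $a^L\le b^L$ and $a^U\le b^U$ with at least one inequality strict. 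By Theorem~\ref{theorem 3.1 p4} it suffices to work endpointwise: for $\ast\in\{L,U\}$,
\[
\tilde{h}^\ast\big(\alpha\delta+\mathcal{E}(\delta)+\lambda\Psi(\cdot,\cdot)\big)\le \lambda a^\ast+(1-\lambda)b^\ast = b^\ast+\lambda(a^\ast-b^\ast).
\]

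The key step is to compare $\lambda(a^\ast-b^\ast)$ with $\lambda(\lambda-1)\phi^\ast$. For $\lambda\in[0,1]$ we have $\lambda\ge\lambda^2$, so $\lambda(a^\ast-b^\ast)\le\lambda^2(a^\ast-b^\ast)$ whenever $a^\ast-b^\ast\le 0$. Consequently
\[
b^\ast+\lambda(a^\ast-b^\ast)\le b^\ast+\lambda(a^\ast-b^\ast)-\lambda(\lambda-1)(a^\ast-b^\ast)\cdot(-1)\cdot(-1),
\]
which simplifies to
\[
b^\ast+\lambda(a^\ast-b^\ast)\le b^\ast+\lambda(\lambda-1)(b^\ast-a^\ast).
\]
This follows because $\lambda(\lambda-1)(b^\ast-a^\ast)-\lambda(a^\ast-b^\ast)=\lambda^2(b^\ast-a^\ast)\ge0$. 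The natural choice is therefore $\phi^\ast=b^\ast-a^\ast\ge 0$, and $\mathfrak{\Phi}(\mathcal{E}(\zeta),\mathcal{E}(\delta))$ is built from $B\ominus_{gH}A$.

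The main obstacle is twofold. First, $\mathfrak{\Phi}$ must be strictly positive, whereas $b^\ast-a^\ast$ may vanish in one endpoint. Second, multiplying an interval by the negative scalar $\lambda(\lambda-1)$ swaps its endpoints, so the lower bound of $\lambda(\lambda-1)\mathfrak{\Phi}$ is $\lambda(\lambda-1)\phi^U$ and the upper bound is $\lambda(\lambda-1)\phi^L$.

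To handle both issues, we take a degenerate interval $\mathfrak{\Phi}=[\phi,\phi]$ with $\phi=\min\{b^L-a^L,\,b^U-a^U\}$. This equals the lower endpoint of $B\ominus_{gH}A$, and the endpoint swap is then harmless. Since $\phi\le b^\ast-a^\ast$, and $\lambda(\lambda-1)\le0$, we get
\[
b^\ast+\lambda(a^\ast-b^\ast)\le b^\ast+\lambda(\lambda-1)(b^\ast-a^\ast)\le b^\ast+\lambda(\lambda-1)\phi .
\]
If $\phi=0$ (strict inequality in only one endpoint), we instead note that the inequality only needs $\phi\le b^\ast-a^\ast$ when $\lambda\in(0,1)$. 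At $\lambda\in\{0,1\}$ the term $\lambda(\lambda-1)\mathfrak{\Phi}$ vanishes, and the claim reduces to the SLU$\mathcal{E}$P inequality together with $A\preceq B$.

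In that degenerate case we would try to use the scaled choice $\phi=\lambda$-independent $\epsilon>0$ only if the slack $\lambda^2(b^\ast-a^\ast)$ permits it. If it does not, we would accept a nonnegative $\mathfrak{\Phi}$ whose interval is nonzero, namely $\mathfrak{\Phi}=[\phi^L,\phi^U]$ built from $B\ominus_{gH}A\succ 0$. This is strictly positive in the paper's LU sense, being $\succeq 0$ and $\neq0$. We would then check endpointwise, using the swap, that $\lambda(\lambda-1)\phi^U\ge\lambda(a^L-b^L)$ in the lower endpoint, appealing to $\lambda(\lambda-1)\phi\ge\lambda(\phi\cdot(-1))$. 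This last verification is the step I expect to require care.

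Finally, we assemble the two endpoint inequalities into the interval inequality $\preceq$ via Lemma~\ref{lemma 3.1 p4}, which yields the PSLU$\mathcal{E}$P condition for all $\zeta,\delta\in\mathcal{S}$ and $\alpha,\lambda\in[0,1]$.
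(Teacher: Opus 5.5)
Your argument is correct when both endpoint inequalities in $\tilde{h}(\mathcal{E}(\zeta))\prec\tilde{h}(\mathcal{E}(\delta))$ are strict. The slack $\lambda^2(b^\ast-a^\ast)\ge0$ is exactly the paper's step $\lambda[\cdots]\preceq\lambda(1-\lambda)[\cdots]$. Your degenerate $\mathfrak{\Phi}=[\phi,\phi]$ with $\phi=\min\{b^L-a^L,b^U-a^U\}>0$ handles the endpoint swap more carefully than the paper's $\mathfrak{\Phi}=[b^U-a^U,\,b^L-a^L]$, which is not an element of $\mathcal{I}(\mathbb{R})$ whenever $b^U-a^U>b^L-a^L$. (Your display with the factor $(-1)\cdot(-1)$ is garbled---its right-hand side equals $b^\ast+\lambda(2-\lambda)(a^\ast-b^\ast)$---but the inequality you state next, and its justification, are fine.) The gap is the case of equality in one endpoint, and your fallback does not close it. For $\mathfrak{\Phi}=B\ominus_{gH}A$, the check you flag amounts to $(1-\lambda)\max\{b^L-a^L,b^U-a^U\}\le b^L-a^L$ for all $\lambda\in(0,1)$. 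This is false whenever $b^U-a^U>b^L-a^L$, in particular when $a^L=b^L$.

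No choice of $\mathfrak{\Phi}$ can close it, because the statement is false when $a^L=b^L$ and $a^U<b^U$. After the swap, the lower endpoint of $B+\lambda(\lambda-1)\mathfrak{\Phi}$ is $b^L-\lambda(1-\lambda)\phi^U$, while SLU$\mathcal{E}$P only yields $\tilde h^L(\cdot)\le b^L$. Concretely, take $\mathcal{S}=[0,\infty)$, $\mathcal{E}(x)=x$, $\Psi(u,v)=u-v$ and $\tilde{h}(x)=[0,e^{-x}]$. The constructed point is $(1+\alpha)(\lambda\zeta+(1-\lambda)\delta)\ge0$, and $e^{-x}$ is convex and decreasing, so $\tilde h$ is SLU$\mathcal{E}$P on the S$\mathcal{E}$I set $\mathcal{S}$. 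Here $\tilde h(1)=[0,e^{-1}]\prec[0,1]=\tilde h(0)$. For $\zeta=1$, $\delta=0$, $\alpha=0$, $\lambda=\tfrac12$ and $\mathfrak{\Phi}(1,0)=[\phi^L,\phi^U]$, the PSLU$\mathcal{E}$P inequality reads $[0,e^{-1/2}]\preceq[-\tfrac14\phi^U,\,1-\tfrac14\phi^L]$. This forces $\phi^L\le\phi^U\le0$, so $\mathfrak{\Phi}(1,0)$ cannot be strictly positive. The paper's proof passes this case only because its $\mathfrak{\Phi}(1,0)=[1-e^{-1},0]$ is an improper interval.

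What your method does establish is this. From the SLU$\mathcal{E}$P bound, the lower and upper endpoints require precisely $\phi^U\le b^L-a^L$ and $\phi^L\le b^U-a^U$. Hence $\mathfrak{\Phi}=[\min\{b^L-a^L,b^U-a^U\},\,b^L-a^L]$ always works, and it is $\succ[0,0]$ exactly when $a^L<b^L$. So the theorem holds under the extra hypothesis $\tilde h^L(\mathcal{E}(\zeta))<\tilde h^L(\mathcal{E}(\delta))$. When both endpoint inequalities are strict, your $[\phi,\phi]$ even has both endpoints positive.
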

    \begin{proof}
    	Let $\tilde{h}\big({\mathcal{E}}(\mathit{\zeta} )\big)\prec \tilde{h}\big({\mathcal{E}} (\mathit{\delta} )\big)$. Since  $ \tilde{h}$ is a SLU$\mathcal{E}$P function on $ {\mathcal{S}}$, for every $\mathit{\zeta} ,\mathit{\delta} \in  {\mathcal{S}}~~ and~~{\alpha},{\lambda}\in {[0,1]} $, we have\\
    	
    	$\tilde{h}\big({\alpha}\mathit{\delta} + {\mathcal{E}}( \mathit{\delta} )+{\lambda}{\Psi}( {\alpha}\mathit{\zeta} + {\mathcal{E}}(\mathit{\zeta} ), {\alpha}\mathit{\delta} + {\mathcal{E}} (\mathit{\delta} ))\big)$
    \begin{equation*}
        \begin{aligned}
             \hspace{1.3cm}&\preceq{\lambda}\tilde{h}\big({\mathcal{E}}(\mathit{\zeta} )\big)+(1-{\lambda})\tilde{h}\big({\mathcal{E}}( \mathit{\delta} )\big)\\ 
    	&=\lambda[\,\tilde{h}^L({\mathcal{E}}(\zeta)),\,\tilde{h}^U({\mathcal{E}}(\zeta))\,]+(1-\lambda)[\,\tilde{h}^L({\mathcal{E}}(\delta)),\,\tilde{h}^U({\mathcal{E}}(\delta))\,]\\
        &=[\lambda\tilde{h}^L({\mathcal{E}}(\zeta))+(1-\lambda)\tilde{h}^L({\mathcal{E}}(\delta)),\lambda\tilde{h}^U({\mathcal{E}}(\zeta))+(1-\lambda)\tilde{h}^U({\mathcal{E}}(\delta))]\\
        &=[\tilde{h}^L({\mathcal{E}}(\delta)),\tilde{h}^U({\mathcal{E}}(\delta))]+[\lambda(\tilde{h}^L({\mathcal{E}}(\zeta))-\tilde{h}^L({\mathcal{E}}(\delta))),\lambda(\tilde{h}^U({\mathcal{E}}(\zeta))-\tilde{h}^U({\mathcal{E}}(\delta)))]\\
        &=\tilde{h}({\mathcal{E}}(\delta))+\lambda[\tilde{h}^L({\mathcal{E}}(\zeta))-\tilde{h}^L({\mathcal{E}}(\delta)),\tilde{h}^U({\mathcal{E}}(\zeta))-\tilde{h}^U({\mathcal{E}}(\delta))]\\
    	&\preceq \tilde{h}\big({\mathcal{E}} (\mathit{\delta} )\big)+{\lambda}(1-{\lambda})[\tilde{h}^L({\mathcal{E}}(\zeta))-\tilde{h}^L({\mathcal{E}}(\delta)),\tilde{h}^U({\mathcal{E}}(\zeta))-\tilde{h}^U({\mathcal{E}}(\delta))]\\
    	&=\tilde{h}\big({\mathcal{E}} (\mathit{\delta} )\big)+{\lambda}({\lambda}-1)[\tilde{h}^U({\mathcal{E}} (\mathit{\delta} ))-\tilde{h}^U({\mathcal{E}}(\mathit{\zeta} )),\tilde{h}^L({\mathcal{E}} (\mathit{\delta} ))-\tilde{h}^L({\mathcal{E}}(\mathit{\zeta} ))]\\
    	&=\tilde{h}\big({\mathcal{E}} (\mathit{\delta} )\big)+{\lambda}({\lambda}-1)\mathfrak{\Phi}\big({\mathcal{E}}(\mathit{\zeta} ),({\mathcal{E}} (\mathit{\delta} )\big).
        \end{aligned}
    \end{equation*}
    
    \noindent
    where, 
    
    \begin{equation*}
\begin{aligned}
\mathfrak{\Phi}\big({\mathcal{E}}(\zeta),\,{\mathcal{E}}(\delta)\big) 
= [\tilde{h}^U({\mathcal{E}} (\mathit{\delta} ))-\tilde{h}^U({\mathcal{E}}(\mathit{\zeta} )),\tilde{h}^L({\mathcal{E}} (\mathit{\delta} ))-\tilde{h}^L({\mathcal{E}}(\mathit{\zeta} ))].
\end{aligned}
\end{equation*}

Therefore, the function $\tilde{h}$ is PSLU$\mathcal{E}$P on ${\mathcal{S}}$.
    \end{proof}
In support of Theorem~\ref{theorem 3.4 p4}, the following illustrative example is provided.

\begin{example}\label{example3.3}
Let $\tilde{h} : (-\infty, 0] \to \mathcal{I}(\mathbb{R})
\quad \text{be an IVF defined by} \quad
\tilde{h}(\zeta) = [\zeta, \zeta + \frac{1}{2}]$ Define a mapping
\[
\mathcal{E} : \mathbb{R} \to \mathbb{R} 
\quad \text{by} \quad 
\mathcal{E}(\zeta) = \lfloor \zeta \rfloor,
\quad \text{where} \; \lfloor \cdot \rfloor \; \text{denotes the greatest integer function.}
\]
Also, let $\Psi : \mathbb{R} \times \mathbb{R} \to \mathbb{R} 
\quad \text{be defined by} \quad 
\Psi(\zeta, \delta) = \zeta - \delta.$\\
Given the IVF $\tilde{h}$, we verify the SLUEP condition:
\[
\tilde{h}\big(\alpha\, \delta + \mathcal{E}(\delta) + \lambda\, \Psi(\alpha\zeta+\mathcal{E}(\zeta), \alpha\, \delta + \mathcal{E}(\delta) \big) 
\preceq \lambda\, \tilde{h}(\mathcal{E}(\zeta)) + (1-\lambda)\, \tilde{h}(\mathcal{E}(\delta)).
\]
This expands to:
\[
\tilde{h}\big(\alpha\, \delta + \lfloor \delta \rfloor + \lambda(\lfloor \zeta \rfloor - \lfloor \delta \rfloor)+\alpha \lambda(\zeta-\delta)\big)
\preceq \lambda\big[\lfloor \zeta \rfloor,\, \lfloor \zeta \rfloor + \tfrac{1}{2} \big]  + (1-\lambda) \big[\lfloor \delta \rfloor,\, \lfloor \delta \rfloor + \tfrac{1}{2}\big].
\]
\[
\tilde{h}\big(\alpha(\lambda \zeta+(1-\lambda)\delta)+\lambda \lfloor \zeta \rfloor+(1-\lambda)\lfloor \delta \rfloor)\preceq \big[\lambda \lfloor \zeta \rfloor+(1-\lambda)\lfloor \delta \rfloor,\lambda \lfloor \zeta \rfloor+(1-\lambda)\lfloor \delta \rfloor+\tfrac{1}{2} \big]
\]
Clearly,
$\alpha(\lambda \zeta+(1-\lambda)\delta)+\lambda \lfloor \zeta \rfloor+(1-\lambda)\lfloor \delta \rfloor 
\leq \lambda \lfloor \zeta \rfloor+(1-\lambda)\lfloor \delta \rfloor$,
and 
\[
\alpha(\lambda \zeta+(1-\lambda)\delta)+\lambda \lfloor \zeta \rfloor+(1-\lambda)\lfloor \delta \rfloor +\tfrac{1}{2}
\leq \lambda \lfloor \zeta \rfloor+(1-\lambda)\lfloor \delta \rfloor+\tfrac{1}{2}.
\]

Thus, $\tilde{h}$ is a SLU$\mathcal{E}$P function.
Next, we verify that $\tilde{h}$ also satisfies the PSLUEP condition:
\begin{equation*}
\begin{aligned}
 &\tilde{h}(\mathcal{E}(\zeta)) \preceq \tilde{h}(\mathcal{E}(\delta))\\ 
\implies&
    \tilde{h}\big(\alpha\, \delta + \mathcal{E}(\delta) + \lambda\, \Psi(\alpha\zeta+\mathcal{E}(\zeta), \alpha\, \delta + \mathcal{E}(\delta) \big) 
\preceq \tilde{h}(\mathcal{E}(\delta)) + \lambda(\lambda-1)\mathfrak{\Phi}\big(\mathcal{E}(\zeta), \mathcal{E}(\delta))\big).   
\end{aligned}
\end{equation*}
Expanding,
\begin{equation*}
    \begin{aligned}
        \tilde{h}\big(\alpha(\lambda \zeta+(1-\lambda)\delta)+\lambda \lfloor \zeta \rfloor+(1-\lambda)\lfloor \delta \rfloor\big)\preceq& \big[\lfloor \delta \rfloor,\lfloor \delta \rfloor+\tfrac{1}{2}\big]+\lambda(\lambda-1)\big[\lfloor \delta \rfloor-\lfloor \zeta \rfloor,\lfloor \delta \rfloor-\lfloor \zeta \rfloor\big]
    \end{aligned}
\end{equation*}
\begin{equation*}
    \begin{aligned}
        &\tilde{h}\big(\alpha(\lambda \zeta+(1-\lambda)\delta)+\lambda \lfloor \zeta \rfloor+(1-\lambda)\lfloor \delta \rfloor\big)\\
        \preceq& \big[\lambda \lfloor \zeta \rfloor+(1-\lambda)\lfloor \delta \rfloor+\lambda^2(\lfloor \delta \rfloor-\lfloor \zeta \rfloor),\lambda \lfloor \zeta \rfloor+(1-\lambda)\lfloor \delta \rfloor+\lambda^2(\lfloor \delta \rfloor-\lfloor \zeta \rfloor)+\tfrac{1}{2} \big]
    \end{aligned}
\end{equation*}
Clearly,
\begin{equation*}
    \begin{aligned}
        \alpha(\lambda \zeta+(1-\lambda)\delta)+\lambda \lfloor \zeta \rfloor+(1-\lambda)\lfloor \delta \rfloor \leq \lambda \lfloor \zeta \rfloor+(1-\lambda)\lfloor \delta \rfloor+\lambda^2(\lfloor \delta \rfloor-\lfloor \zeta \rfloor)
    \end{aligned}
\end{equation*}
also 
\begin{equation*}
    \begin{aligned}
         \alpha(\lambda \zeta+(1-\lambda)\delta)+\lambda \lfloor \zeta \rfloor+(1-\lambda)\lfloor \delta \rfloor+\tfrac{1}{2} \leq \lambda \lfloor \zeta \rfloor+(1-\lambda)\lfloor \delta \rfloor+\lambda^2(\lfloor \delta \rfloor-\lfloor \zeta \rfloor)+\tfrac{1}{2}
    \end{aligned}
\end{equation*}
and hence, $\tilde{h}$ is a PSLU$\mathcal{E}$P function.
\noindent
Therefore, $\tilde{h}$ is both SLU$\mathcal{E}$P and PSLU$\mathcal{E}$P.
\begin{figure}[H]
		\centering
		\includegraphics[width=0.9\linewidth]{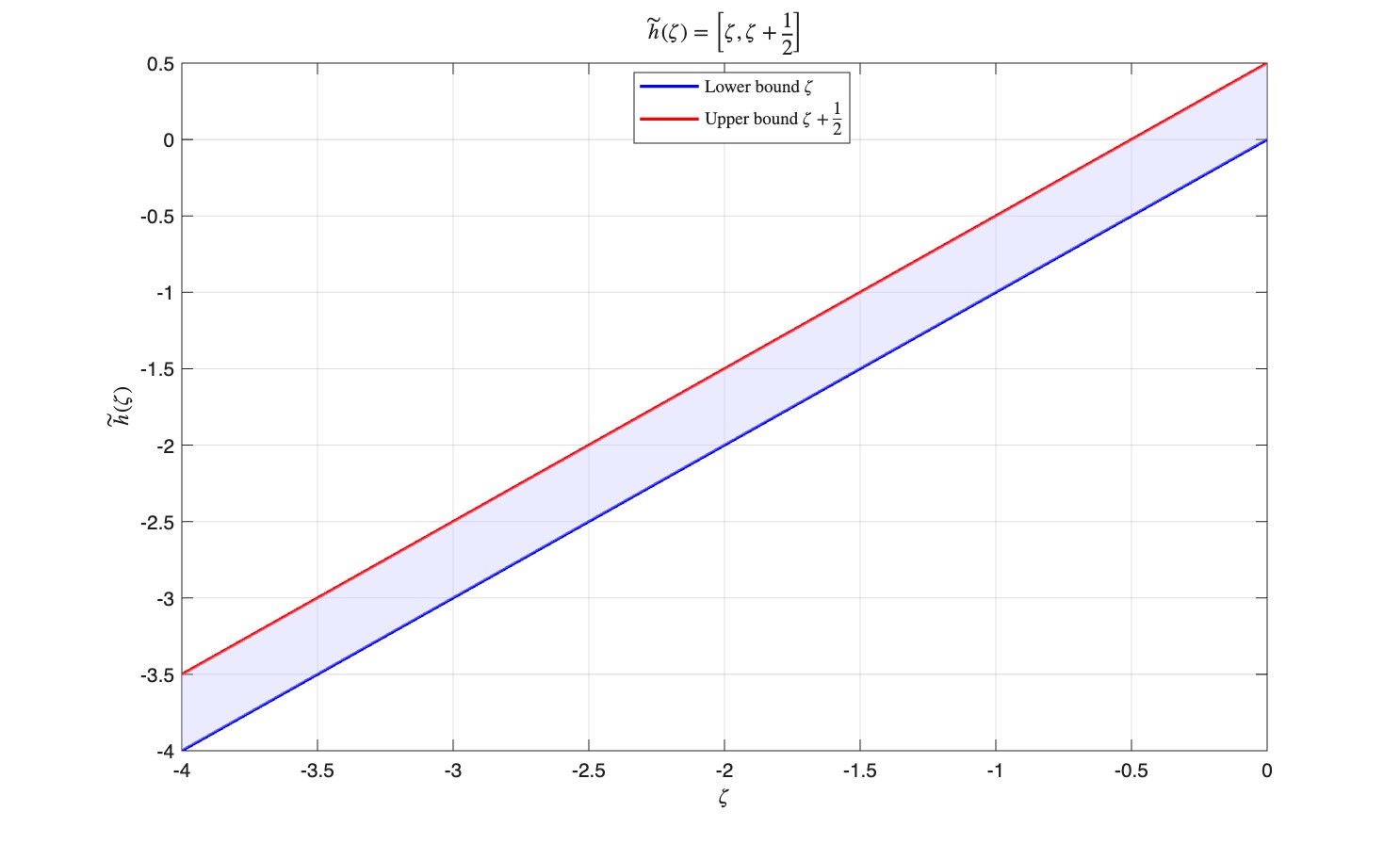}
		\label{fig:placeholder}
		\caption{The function $\tilde{h}$ is an SLU$\mathcal{E}$P as well as PSLU$\mathcal{E}$P with respect to the mappings $\mathcal{E}$ and $\Psi$ given in Example~\ref{example3.3}.
}
\end{figure}

\end{example}
 
    \begin{theorem} 
    	Let ${\mathcal{S}}\subseteq  \mathbb{R}^{n}$ be a S$\mathcal{E}I$ set and $\{ \tilde{h}_j\}_{j\in J}$ be a collection of  functions defined on $ {\mathcal{S}}$ such that $\forall~ \mathit{\zeta} \in {\mathcal{S}},~\sup\limits_{j\in J} ~ \tilde{h}_j(\mathit{\zeta} )$ exists in $\mathcal{I}(\mathbb{R})$. Let $ \tilde{h}: {\mathcal{S}}\rightarrow  \mathcal{I}(\mathbb{R})$ be a function defined by $ \tilde{h}(\mathit{\zeta} )=\sup\limits_{j\in J} \tilde{h}_j(\mathit{\zeta} ),\forall~ \mathit{\zeta} \in  {\mathcal{S}}$. If $ \tilde{h}_j: {\mathcal{S}}\rightarrow  \mathcal{I}(\mathbb{R})$, for every $j\in J$, are  SLU$\mathcal{E}$P functions on $ {\mathcal{S}}$, then  $ \tilde{h}$ is  SLU$\mathcal{E}$P on $ {\mathcal{S}}$.   
    \end{theorem}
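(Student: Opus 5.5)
The plan is the standard ``supremum of convex functions is convex'' argument, carried out in the LU order. Fix $\zeta,\delta\in\mathcal{S}$ and $\alpha,\lambda\in[0,1]$, and abbreviate
\[
w=\alpha\delta+\mathcal{E}(\delta)+\lambda\Psi\big(\alpha\zeta+\mathcal{E}(\zeta),\alpha\delta+\mathcal{E}(\delta)\big).
\]
Since $\mathcal{S}$ is S$\mathcal{E}$I, $w\in\mathcal{S}$. By Lemma~\ref{lemma 2.1 p4}, $\mathcal{E}(\zeta),\mathcal{E}(\delta)\in\mathcal{S}$. Hence $\tilde h(w)$, $\tilde h(\mathcal{E}(\zeta))$ and $\tilde h(\mathcal{E}(\delta))$ are all defined as suprema that exist in $\mathcal{I}(\mathbb{R})$.

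\textbf{Step 1: a common upper bound.} For each $j\in J$, the SLU$\mathcal{E}$P property of $\tilde h_j$ gives
\[
\tilde h_j(w)\preceq \lambda\tilde h_j(\mathcal{E}(\zeta))+(1-\lambda)\tilde h_j(\mathcal{E}(\delta)).
\]
By definition of the supremum as an upper bound, $\tilde h_j(\mathcal{E}(\zeta))\preceq \tilde h(\mathcal{E}(\zeta))$ and $\tilde h_j(\mathcal{E}(\delta))\preceq \tilde h(\mathcal{E}(\delta))$. Lemma~\ref{lemma 3.1 p4}, applied with $\nu_1=\lambda$ and $\nu_2=1-\lambda$, together with transitivity of $\preceq$ (transitivity holds because $\preceq$ is defined endpointwise), then gives
\[
\tilde h_j(w)\preceq \lambda\tilde h(\mathcal{E}(\zeta))+(1-\lambda)\tilde h(\mathcal{E}(\delta))\quad\text{for every } j\in J.
\]
So the interval on the right-hand side is an upper bound of the set $\{\tilde h_j(w):j\in J\}$.

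\textbf{Step 2: pass to the supremum.} By the definition of supremum (Kumar and Ghosh), the supremum $\tilde h(w)$ is $\preceq$ every upper bound. Therefore
\[
\tilde h(w)\preceq \lambda\tilde h(\mathcal{E}(\zeta))+(1-\lambda)\tilde h(\mathcal{E}(\delta)),
\]
which is exactly the SLU$\mathcal{E}$P inequality of Definition~\ref{definition 3.1 p3}.

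The only point needing care is Step~2: one must confirm that the supremum in the LU order is least among upper bounds. This is precisely how the supremum was defined earlier in the paper, so it is immediate. For extra concreteness, one can check that under $\preceq$ the supremum of a bounded family is $[\sup_j \tilde h_j^L,\sup_j \tilde h_j^U]$. The argument then reduces endpointwise to the real-valued fact that a supremum of S$\mathcal{E}$P functions is S$\mathcal{E}$P, in line with Theorem~\ref{theorem 3.1 p4}.
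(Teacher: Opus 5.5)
Your proof is correct and follows the same route as the paper: you apply the SLU$\mathcal{E}$P inequality to each $\tilde h_j$ and then pass to the supremum over $j\in J$. Your version is more careful than the paper's, which simply takes suprema on both sides. Your Step~1 bounds every $\tilde h_j(w)$ by $\lambda\tilde h(\mathcal{E}(\zeta))+(1-\lambda)\tilde h(\mathcal{E}(\delta))$ via the nonnegative-combination lemma and transitivity, and Step~2 then uses the least-upper-bound property; together they supply exactly the justification the paper omits, and your observation that $\mathcal{E}(\zeta),\mathcal{E}(\delta)\in\mathcal{S}$ makes the right-hand side well defined.
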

    \begin{proof}
    	Suppose that $ \tilde{h}_j: {\mathcal{S}}\rightarrow  \mathcal{I}(\mathbb{R})$, for every $ ~j\in J$, are SLU$\mathcal{E}$P functions on $ {\mathcal{S}}$, then $\forall~\mathit{\zeta} ,\mathit{\delta} \in {\mathcal{S}}$, ${\alpha} \in {[0,1]}~\&~ {\lambda} \in {[0,1]} $, we get\\
    	
    	$\tilde{h}_j\big( {\alpha}  \mathit{\delta} + {\mathcal{E}}( \mathit{\delta} )+ {\lambda} {\Psi}( {\alpha}  \mathit{\zeta} + {\mathcal{E}}( \mathit{\zeta} ), {\alpha}  \mathit{\delta} + {\mathcal{E}}( \mathit{\delta} ))\big)\preceq{\lambda}  \tilde{h}_j\big( {\mathcal{E}}( \mathit{\zeta} )\big)+(1- {\lambda}) \tilde{h}_j\big( {\mathcal{E}}( \mathit{\delta} )\big)$\\
    	
    	$\sup\limits_{j\in J} \tilde{h}_j\big( {\alpha}  \mathit{\delta} + {\mathcal{E}}( \mathit{\delta} )+ {\lambda} {\Psi}( {\alpha}  \mathit{\zeta} + {\mathcal{E}}( \mathit{\zeta} ), {\alpha}  \mathit{\delta} + {\mathcal{E}}( \mathit{\delta} ))\big)$
     \begin{equation*}
         \begin{aligned}
              \hspace{6.2cm}&\preceq{\lambda}\sup\limits_{j\in J}\tilde{h}_j\big({\mathcal{E}}(\mathit{\zeta} )\big)+(1- {\lambda})\sup\limits_{j\in J}\tilde{h}_j\big( {\mathcal{E}} (\mathit{\delta} )\big)\\
     	&= {\lambda}\tilde{h}\big({\mathcal{E}}(\mathit{\zeta} )\big)+(1-{\lambda})\tilde{h}\big({\mathcal{E}} (\mathit{\delta} )\big),\\
         \end{aligned}
     \end{equation*}
     $$\tilde{h}\big({\alpha}\mathit{\delta} + {\mathcal{E}} (\mathit{\delta} )+{\lambda} {\Psi}({\alpha}\mathit{\zeta} + {\mathcal{E}}(\mathit{\zeta} ), {\alpha}\mathit{\delta} +{\mathcal{E}} (\mathit{\delta} ))\big)
     \preceq{\lambda}\tilde{h}\big({\mathcal{E}}(\mathit{\zeta} )\big)+(1-{\lambda})\tilde{h}\big({\mathcal{E}} (\mathit{\delta} )\big).$$
    \end{proof}    

    We now define the notion of a strongly G-invex set in the context of an interval-valued function.
   \begin{definition}
    		\noindent
Let $\mathcal{T} \subseteq \mathbb{R}^{n} \times \mathcal{I}(\mathbb{R})$, where $\mathcal{E} : \mathbb{R}^{n} \to \mathbb{R}^{n}$ and $\mathcal{E}_{0} : \mathcal{I}(\mathbb{R}) \to \mathcal{I}(\mathbb{R})$ are given mappings. The set $\mathcal{T}$ is said to be $G$ strongly $E$-invex (GSEI) with respect to $\Psi$ if $\forall$ $(\zeta, \mathcal{I}_1), (\delta, \mathcal{I}_2) \in \mathcal{T}$ and for any $\alpha, \lambda \in [0,1]$, the following condition holds:
\[
\Big(
\alpha\, \delta + \mathcal{E}(\delta) + \lambda\, \Psi\big(\alpha\, \zeta + \mathcal{E}(\zeta),\, \alpha\, \delta + \mathcal{E}(\delta)\big),\;\;
\lambda\, \mathcal{E}_{0}(\mathcal{I}_1) + (1 - \lambda)\, \mathcal{E}_{0}(\mathcal{I}_2)
\Big)
\in \mathcal{T}.
\]
\end{definition}
\noindent
A set ${\mathcal{S}} \subseteq \mathbb{R}^{n}$ is said to be \textbf{S$\mathcal{E}$I} if and only if ${\mathcal{S}} \times \mathcal{I}(\mathbb{R})$ is a \textbf{GSEI} set with respect to ${\Psi}$.

\noindent
The \emph{epigraph} of $\tilde{h}$, denoted by $\operatorname{epi}(\tilde{h})$, is defined by
\[
\operatorname{epi}(\tilde{h}) = \big\{\, (\zeta,\, \mathcal{I}) \in {\mathcal{S}} \times \mathcal{I}(\mathbb{R}) :\, \tilde{h}(\zeta) \preceq \mathcal{I} \big\}.
\]

\noindent
Next, we present a useful characterization of a SLU$\mathcal{E}$P function in terms of its epigraph.

    	 \begin{theorem} \label{theorem 3.6 p4} 
    	 	\noindent
Let $\mathcal{S} \subseteq \mathbb{R}^{n}$ be a S$\mathcal{E}$I set with respect to $\Psi: \mathbb{R}^{n} \times \mathbb{R}^{n} \to \mathbb{R}^{n}$. Suppose $\tilde{h}: \mathcal{S} \to \mathcal{I}(\mathbb{R})$ is a function and $\mathcal{E}_{0}: \mathcal{I}(\mathbb{R}) \to \mathcal{I}(\mathbb{R})$ is a onto map satisfying
\[
\mathcal{E}_{0}\big( \tilde{h}(\zeta) + t \big) = \tilde{h}\big( \mathcal{E}(\zeta) \big) + t,\quad \forall\, t \geq 0.
\]
Then, $\tilde{h}$ is SLU$\mathcal{E}$P on $\mathcal{S}$ if and only if  $epi(\tilde{h})$ is GSEI set in $\mathcal{S} \times \mathcal{I}(\mathbb{R})$.

    	 \end{theorem}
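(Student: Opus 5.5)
The plan is to prove both implications directly from the definitions of SLU$\mathcal{E}$P and GSEI, using the hypothesis on $\mathcal{E}_0$ to move between $\mathcal{E}_0$ of epigraph levels and values of $\tilde{h}$ at $\mathcal{E}$-images. Throughout, I read the hypothesis with $t$ as the degenerate interval $[t,t]$. I also interpret $\mathcal{E}_0(\mathcal{I})$ for $(\zeta,\mathcal{I})\in\operatorname{epi}(\tilde{h})$ as follows. Since $\tilde{h}(\zeta)\preceq\mathcal{I}$, we can write $\mathcal{I}=\tilde{h}(\zeta)+[t_1,t_2]$ with $0\le t_1$ and $0\le t_2$, where $t_1 = i^L-\tilde h^L(\zeta)$ and $t_2=i^U-\tilde h^U(\zeta)$. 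The hypothesis is then applied endpointwise, in the form $\mathcal{E}_0(\tilde{h}(\zeta)+T)=\tilde{h}(\mathcal{E}(\zeta))+T$ for nonnegative shifts $T$. This is the only reading under which the statement is usable, so I commit to it.

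\emph{($\Rightarrow$)} Assume $\tilde{h}$ is SLU$\mathcal{E}$P. Take $(\zeta,\mathcal{I}_1),(\delta,\mathcal{I}_2)\in\operatorname{epi}(\tilde{h})$ and $\alpha,\lambda\in[0,1]$, and set $w=\alpha\delta+\mathcal{E}(\delta)+\lambda\Psi(\alpha\zeta+\mathcal{E}(\zeta),\alpha\delta+\mathcal{E}(\delta))$. Then $w\in\mathcal{S}$ because $\mathcal{S}$ is S$\mathcal{E}$I. Write $\mathcal{I}_1=\tilde{h}(\zeta)+T_1$ and $\mathcal{I}_2=\tilde{h}(\delta)+T_2$ with $T_i\succeq 0$. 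The hypothesis on $\mathcal{E}_0$ gives $\mathcal{E}_0(\mathcal{I}_1)=\tilde{h}(\mathcal{E}(\zeta))+T_1\succeq\tilde{h}(\mathcal{E}(\zeta))$, and similarly for $\mathcal{I}_2$. The SLU$\mathcal{E}$P inequality combined with Lemma~\ref{lemma 3.1 p4} then yields
\[
\tilde{h}(w)\preceq\lambda\tilde{h}(\mathcal{E}(\zeta))+(1-\lambda)\tilde{h}(\mathcal{E}(\delta))\preceq\lambda\mathcal{E}_0(\mathcal{I}_1)+(1-\lambda)\mathcal{E}_0(\mathcal{I}_2).
\]
Hence $\big(w,\lambda\mathcal{E}_0(\mathcal{I}_1)+(1-\lambda)\mathcal{E}_0(\mathcal{I}_2)\big)\in\operatorname{epi}(\tilde{h})$, so $\operatorname{epi}(\tilde{h})$ is GSEI.

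\emph{($\Leftarrow$)} Assume $\operatorname{epi}(\tilde{h})$ is GSEI. Given $\zeta,\delta\in\mathcal{S}$, the points $(\zeta,\tilde{h}(\zeta))$ and $(\delta,\tilde{h}(\delta))$ lie in the epigraph trivially. Applying the hypothesis with $t=0$ gives $\mathcal{E}_0(\tilde{h}(\zeta))=\tilde{h}(\mathcal{E}(\zeta))$ and $\mathcal{E}_0(\tilde{h}(\delta))=\tilde{h}(\mathcal{E}(\delta))$. The GSEI property then places $\big(w,\lambda\tilde{h}(\mathcal{E}(\zeta))+(1-\lambda)\tilde{h}(\mathcal{E}(\delta))\big)$ in $\operatorname{epi}(\tilde{h})$, which is exactly the SLU$\mathcal{E}$P inequality. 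Note that this direction does not need the onto assumption; it is harmless and can be mentioned as only ensuring that $\mathcal{E}_0$ is globally defined.

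The main obstacle is the ($\Rightarrow$) step: the hypothesis is stated only for a scalar $t\ge0$, whereas a general epigraph element differs from $\tilde{h}(\zeta)$ by a nonnegative interval shift $[t_1,t_2]$ with possibly $t_1\ne t_2$, and in fact one with $t_1>t_2$ when $\mathcal{I}$ is narrower than $\tilde h(\zeta)$. I would first try to reduce to the scalar case by taking $t=t_1$, writing $\mathcal{I}\succeq\tilde{h}(\zeta)+t_1$, and arguing monotonicity of $\mathcal{E}_0$. Since monotonicity is not given, I expect to state explicitly that the hypothesis is understood for all nonnegative shifts of $\tilde{h}(\zeta)$. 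This is the reading that makes the argument close, and it matches the intended classical proof.
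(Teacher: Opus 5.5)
Your argument is the paper's proof. The converse is identical: put $(\zeta,\tilde h(\zeta))$ and $(\delta,\tilde h(\delta))$ into the GSEI condition and use the $t=0$ case of the hypothesis. The forward direction chains the SLU$\mathcal{E}$P inequality with $\tilde h(\mathcal{E}(\zeta))\preceq\mathcal{E}_0(\mathcal{I}_1)$ and $\tilde h(\mathcal{E}(\delta))\preceq\mathcal{E}_0(\mathcal{I}_2)$ through Lemma~\ref{lemma 3.1 p4}. The paper never derives these two inequalities; it just writes ``let $\mathcal{E}_0(\mathcal{I}_1)$ and $\mathcal{E}_0(\mathcal{I}_2)$ be such that'' they hold.

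Your suspicion about this step is justified, and it is not a technicality: under the literal hypothesis (scalar $t\ge0$) the forward implication is false. Take $n=1$, $\mathcal{S}=\mathbb{R}$, $\mathcal{E}(\zeta)=\zeta$, $\Psi(x,y)=x-y$, and $\tilde h\equiv[0,0]$, which is trivially SLU$\mathcal{E}$P. Let $\mathcal{E}_0([a,b])=[2a-b,\,b]$. This is a bijection of $\mathcal{I}(\mathbb{R})$ (inverse $[c,d]\mapsto[\tfrac{c+d}{2},d]$) with $\mathcal{E}_0([t,t])=[t,t]$, so the hypothesis holds. Taking both points equal to $(0,[0,1])\in\operatorname{epi}(\tilde h)$, the GSEI condition would require $[0,0]=\tilde h(w)\preceq\mathcal{E}_0([0,1])=[-1,1]$, which fails. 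So some strengthening of the hypothesis is unavoidable. Your proposal is as complete as the paper's proof and, unlike it, says what it assumes.

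The endpointwise reading you commit to is, however, far more restrictive than it looks. Any two values $\tilde h(\zeta_1),\tilde h(\zeta_2)$ have a common upper bound $[M,M]$, and your rule prescribes $\mathcal{E}_0([M,M])$ from each of them. Comparing endpoints forces $\tilde h^L(\mathcal{E}(\zeta))-\tilde h^L(\zeta)\equiv c^L$ and $\tilde h^U(\mathcal{E}(\zeta))-\tilde h^U(\zeta)\equiv c^U$ to be constants. You also need $c^L\le c^U$, since otherwise $\mathcal{E}_0([M,M])=[M+c^L,M+c^U]$ is not an interval; this is exactly where the case $t_1>t_2$ bites. So your hypothesis can only be met when $\tilde h\circ\mathcal{E}=\tilde h+[c^L,c^U]$, and the same rigidity is hidden in the real-valued prototype of this theorem.

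Your abandoned monotonicity idea gives a much less restrictive repair, provided you use $t=0$ rather than $t=t_1$; the inequality $\tilde h(\zeta)+t_1\preceq\mathcal{I}$ fails when $t_2<t_1$. Assume $\mathcal{E}_0$ is LU non-decreasing, as the paper defines right after this theorem, and $\mathcal{E}_0(\tilde h(\zeta))=\tilde h(\mathcal{E}(\zeta))$. Then $\mathcal{I}_1\succeq\tilde h(\zeta)$ immediately gives $\mathcal{E}_0(\mathcal{I}_1)\succeq\tilde h(\mathcal{E}(\zeta))$, and the rest of your argument goes through verbatim. Neither the shifts $t>0$ nor the surjectivity of $\mathcal{E}_0$ are then used in either direction.
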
 
    	   
       \begin{proof}
       	Let $(\mathit{\zeta} ,\mathcal{I}_1),(\mathit{\delta} ,\mathcal{I}_2)\in epi(\tilde{h})$. Since ${\mathcal{S}}$ is S$\mathcal{E}I$ set, we have 
       	$$ {\alpha} \mathit{\delta} + {\mathcal{E}} (\mathit{\delta} )+ { \lambda} {\Psi}\big( {\alpha} \mathit{\zeta} + {\mathcal{E}}(\mathit{\zeta} ), {\alpha} \mathit{\delta} + {\mathcal{E}} (\mathit{\delta} )\big)\in{\mathcal{S}},$$ 
      for all $\mathit{\zeta} ,\mathit{\delta} \in {\mathcal{S}},~{\alpha}\in {[0,1]}~\&~{\lambda}\in {[0,1]} $. Let $ {\mathcal{E}}_{0}(\mathcal{I}_1)$ and $ {\mathcal{E}}_{0}(\mathcal{I}_2)$ be such that $$ \tilde{h}\big( {\mathcal{E}}(\mathit{\zeta} )\big)\preceq  {\mathcal{E}}_{0}(\mathcal{I}_1),~ \tilde{h}\big( {\mathcal{E}} (\mathit{\delta} )\big)\preceq  {\mathcal{E}}_{0}(\mathcal{I}_2).$$
       	Then,   $\big( {\mathcal{E}}(\mathit{\zeta} ), {\mathcal{E}}_{0}(\mathcal{I}_1)\big),\big( {\mathcal{E}} (\mathit{\delta} ), {\mathcal{E}}_{0}(\mathcal{I}_2)\big)\in epi( \tilde{h})$.
       	Since the function $\tilde{h}$ is SLUEP on $ {\mathcal{S}}$, we get
       	\begin{equation*}
       	    \begin{aligned}
       	        \tilde{h}\big({\alpha}  \mathit{\delta} + {\mathcal{E}}( \mathit{\delta} )+ {\lambda} {\Psi}( {\alpha}  \mathit{\zeta} + {\mathcal{E}}( \mathit{\zeta} ), {\alpha}  \mathit{\delta} + {\mathcal{E}}( \mathit{\delta} ))\big)&\preceq  {\lambda}  \tilde{h}\big( {\mathcal{E}}( \mathit{\zeta} )\big)+(1- {\lambda}) \tilde{h}\big( {\mathcal{E}}( \mathit{\delta} )\big)\\
       		&\preceq  {\lambda}  {\mathcal{E}}_{0}(\mathcal{I}_1)+(1- {\lambda}) {\mathcal{E}}_{0}(\mathcal{I}_2).
       	    \end{aligned}
       	\end{equation*}
       	Thus, 
       	$$\big({\alpha} \mathit{\delta} + {\mathcal{E}}( \mathit{\delta} )+ {\lambda} {\Psi}( {\alpha}  \mathit{\zeta} + {\mathcal{E}}( \mathit{\zeta} ), {\alpha}  \mathit{\delta} +{\mathcal{E}}( \mathit{\delta} )), {\lambda}{\mathcal{E}}_{0}(\mathcal{I}_1)+(1- {\lambda}) {\mathcal{E}}_{0}(\mathcal{I}_2)\big)\in epi(\tilde{h}).$$
       	Therefore, the set $epi(\tilde{h})$ is GSEI on $ {\mathcal{S}}\times \mathcal{I}(\mathbb{R})$.
       	
       		\vspace{.2cm}
       	\noindent
       	Conversely, let $epi( \tilde{h})$ be a GSEI set on $ {\mathcal{S}}\times  \mathcal{I}(\mathbb{R})$. Let $\mathit{\zeta} ,\mathit{\delta} \in  {\mathcal{S}},~ {\alpha} \in {[0,1]}~\&~ {\lambda}\in {[0,1]} $. Then, $(\mathit{\zeta} , \tilde{h}(\mathit{\zeta} )),(\mathit{\delta} , \tilde{h} (\mathit{\delta} ))\in epi(\tilde{h})$. Since $epi( \tilde{h})$ is GSEI on ${\mathcal{S}}\times \mathcal{I}(\mathbb{R})$, we have
       	$$\big({\alpha}\mathit{\delta} +{\mathcal{E}} (\mathit{\delta} )+ {\lambda} {\Psi}( {\alpha}  \mathit{\zeta} + {\mathcal{E}}( \mathit{\zeta} ), {\alpha}\mathit{\delta} + {\mathcal{E}} (\mathit{\delta} )), ~{\lambda}  {\mathcal{E}}_{0}( \tilde{h}( \mathit{\zeta} ))+(1-{\lambda}) {\mathcal{E}}_{0}( \tilde{h}( \mathit{\delta} ))\big)\in epi( \tilde{h}),$$ which implies that
       	\begin{equation*}
       	    \begin{aligned}
       	         \tilde{h}\big({\alpha}  \mathit{\delta} + {\mathcal{E}}( \mathit{\delta} )+ {\lambda} {\Psi}( {\alpha}  \mathit{\zeta} + {\mathcal{E}}( \mathit{\zeta} ), {\alpha}  \mathit{\delta} + {\mathcal{E}}( \mathit{\delta} ))\big)&\preceq {\lambda}{\mathcal{E}}_{0}\big(\tilde{h}(\mathit{\zeta} )\big)+(1-{\lambda}){\mathcal{E}}_{0}\big( \tilde{h}( \mathit{\delta} )\big),\\
       		&\preceq {\lambda}\tilde{h}\big({\mathcal{E}}( \mathit{\zeta} )\big)+(1-{\lambda})\tilde{h}\big({\mathcal{E}}( \mathit{\delta} )\big).
       	    \end{aligned}
       	\end{equation*}
       Thus, $\tilde{h}$ is SLUEP on $ {\mathcal{S}}$.    
       \end{proof}

       \begin{definition}
    An IVF $\tilde{h} : \mathcal{I}(\mathbb{R})  \to \mathcal{I}(\mathbb{R}) $ is said to be LU non-decreasing if the following holds:
\[
\tilde{h}(\mathcal{I}_1) \preceq \tilde{h}(\mathcal{I}_2), \quad \text{whenever } \mathcal{I}_1\preceq \mathcal{I}_2,
\]
\end{definition}
\begin{example}
    For an IVF $\tilde{h} : \mathcal{ I(\mathbb{R})}  \to \mathcal{ I(\mathbb{R})} $ defined as 
    $\tilde{h}(\mathcal{O})=\alpha\mathcal{O} $, where $\alpha \geq 0$ then $\tilde{h}$ is LU non-decreasing.
\end{example}

       \begin{theorem}
       	Let ${\mathcal{S}} \subseteq \mathbb{R}^{n}$ be an S$\mathcal{E}$I set and let $\tilde{h} : \mathbb{R}^{n} \to \mathcal{I}(\mathbb{R})$ be a SLU$\mathcal{E}$P function with respect to ${\Psi}$ on ${\mathcal{S}}$. Assume that ${\varphi} : \mathcal{I}(\mathbb{R}) \to \mathcal{I}(\mathbb{R})$ is positively homogeneous and LU non-decreasing function. Then the composition ${\varphi} \circ \tilde{h}$ is also a SLU$\mathcal{E}$P function on ${\mathcal{S}}$.
       \end{theorem}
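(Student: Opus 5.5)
The plan is to apply $\varphi$ to both sides of the defining inequality of Definition~\ref{definition 3.1 p3} and then pull $\varphi$ through the convex combination on the right-hand side. Fix $\zeta,\delta\in\mathcal{S}$ and $\alpha,\lambda\in[0,1]$, and write
\[
z=\alpha\delta+\mathcal{E}(\delta)+\lambda\Psi\big(\alpha\zeta+\mathcal{E}(\zeta),\alpha\delta+\mathcal{E}(\delta)\big).
\]
Since $\mathcal{S}$ is S$\mathcal{E}$I, $z\in\mathcal{S}$, so $(\varphi\circ\tilde{h})(z)$ is defined. Since $\tilde{h}$ is SLU$\mathcal{E}$P, we have
\[
\tilde{h}(z)\preceq \lambda\tilde{h}(\mathcal{E}(\zeta))+(1-\lambda)\tilde{h}(\mathcal{E}(\delta)).
\]
The LU non-decreasing property of $\varphi$ then gives
\[
\varphi(\tilde{h}(z))\preceq\varphi\big(\lambda\tilde{h}(\mathcal{E}(\zeta))+(1-\lambda)\tilde{h}(\mathcal{E}(\delta))\big).
\]

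The second step is to bound the right-hand side by $\lambda\varphi(\tilde{h}(\mathcal{E}(\zeta)))+(1-\lambda)\varphi(\tilde{h}(\mathcal{E}(\delta)))$. Positive homogeneity gives $\varphi(\lambda\mathcal{I})=\lambda\varphi(\mathcal{I})$ for $\lambda\ge 0$. This is exactly what is needed for the scalar factors; the endpoint cases $\lambda=0$ and $\lambda=1$ are immediate from it, taking $\varphi(0\cdot\mathcal{I})=0\cdot\varphi(\mathcal{I})$.

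The main obstacle is splitting $\varphi$ over the sum of two intervals. Positive homogeneity and monotonicity alone do not give
\[
\varphi(\mathcal{I}_1+\mathcal{I}_2)\preceq\varphi(\mathcal{I}_1)+\varphi(\mathcal{I}_2).
\]
I would therefore make explicit the subadditivity that the statement implicitly relies on, namely that $\varphi$ is sublinear in the LU order; the model case $\varphi(\mathcal{O})=\beta\mathcal{O}$ with $\beta\ge0$ from the preceding example is even additive. With this in hand, I would chain
\[
\varphi\big(\lambda A+(1-\lambda)B\big)\preceq\varphi(\lambda A)+\varphi((1-\lambda)B)=\lambda\varphi(A)+(1-\lambda)\varphi(B),
\]
where $A=\tilde{h}(\mathcal{E}(\zeta))$ and $B=\tilde{h}(\mathcal{E}(\delta))$. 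Lemma~\ref{lemma 3.1 p4} is used to keep the LU order when combining the two terms.

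Combining the two steps gives
\[
(\varphi\circ\tilde{h})(z)\preceq\lambda(\varphi\circ\tilde{h})(\mathcal{E}(\zeta))+(1-\lambda)(\varphi\circ\tilde{h})(\mathcal{E}(\delta)),
\]
which is the SLU$\mathcal{E}$P inequality for $\varphi\circ\tilde{h}$. If one prefers to work componentwise via Theorem~\ref{theorem 3.1 p4}, the same argument runs on the endpoints $(\varphi\circ\tilde{h})^L$ and $(\varphi\circ\tilde{h})^U$ separately.
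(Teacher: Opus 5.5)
Your diagnosis is right, and the problem is in the statement itself: with only positive homogeneity and LU monotonicity, the statement is false, so it cannot be proved as written. The paper's proof only says the result ``follows directly from the definitions.'' That silently assumes exactly the splitting step $\varphi(\lambda A+(1-\lambda)B)\preceq\lambda\varphi(A)+(1-\lambda)\varphi(B)$ that you flagged.

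Here is a concrete counterexample. Take $n=1$, $\mathcal{E}(\zeta)=\zeta$, $\Psi(x,y)=x-y$, $\mathcal{S}=(-\infty,0]$, $\tilde{h}(\zeta)=[\zeta+1,\zeta+2]$ and $\varphi([a,b])=[\min\{a,0\},\min\{b,0\}]$.
\begin{itemize}
\item The S$\mathcal{E}$I point is $(1+\alpha)m$ with $m=\lambda\zeta+(1-\lambda)\delta\le 0$, so $\mathcal{S}$ is S$\mathcal{E}$I.
\item Both endpoint inequalities for $\tilde{h}$ reduce to $\alpha m\le 0$, so $\tilde{h}$ is SLU$\mathcal{E}$P on $\mathcal{S}$.
\item $\varphi$ is positively homogeneous and LU non-decreasing.
\end{itemize}
Yet for $\zeta=0$, $\delta=-2$, $\alpha=0$, $\lambda=\tfrac12$,
\[
(\varphi\circ\tilde{h})(-1)=\varphi([0,1])=[0,0]\not\preceq[-\tfrac12,0]=\tfrac12\varphi([1,2])+\tfrac12\varphi([-1,0]).
\]
The failure comes from this $\varphi$ being LU-concave.

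With the subadditivity you add, your chain (monotonicity, then subadditivity, then homogeneity) is complete and correct; Lemma~\ref{lemma 3.1 p4} is not actually needed there. All you really use is LU-convexity of $\varphi$, i.e.\ $\varphi(\lambda A+(1-\lambda)B)\preceq\lambda\varphi(A)+(1-\lambda)\varphi(B)$ for $\lambda\in[0,1]$. Sublinearity is one way to guarantee it, and once convexity is assumed, positive homogeneity is no longer needed. The paper's model case $\varphi(\mathcal{O})=\beta\mathcal{O}$ with $\beta\ge 0$ is additive, so your corrected statement covers it.
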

       \begin{proof} The result follows directly from the definitions.

       \end{proof}
       \begin{theorem} \label{theorem 3.8}
Suppose that for each $j = 1, \dots, m$, the function $g_j : \mathbb{R}^{n} \to \mathcal{I}(\mathbb{R})$ is SLU$\mathcal{E}$P with respect to $\Psi$. If the mapping $\mathcal{E}$ satisfies $\mathcal{E}(\mathcal{S}) \subseteq \mathcal{S}$, then the set
\[
\mathcal{S} = \big\{\, \zeta \in \mathbb{R}^{n} :\, g_j(\zeta) \preceq 0 , \forall ~j = 1, \dots, m \big\}
\]
is an S$\mathcal{E}$I set.
\end{theorem}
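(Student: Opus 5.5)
We take arbitrary $\zeta,\delta\in\mathcal{S}$ and $\alpha,\lambda\in[0,1]$, and set
\[
w=\alpha\delta+\mathcal{E}(\delta)+\lambda\Psi\big(\alpha\zeta+\mathcal{E}(\zeta),\alpha\delta+\mathcal{E}(\delta)\big).
\]
By Definition~\ref{definition 2.9 p4} it suffices to show $w\in\mathcal{S}$, that is, $g_j(w)\preceq 0$ for every $j=1,\dots,m$. Throughout, $0$ is read as the degenerate interval $[0,0]$.

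Fix $j$. Because $g_j$ is SLU$\mathcal{E}$P with respect to $\Psi$, Definition~\ref{definition 3.1 p3} applied at $\zeta,\delta$ gives
\[
g_j(w)\preceq \lambda g_j(\mathcal{E}(\zeta))+(1-\lambda)g_j(\mathcal{E}(\delta)).
\]
The hypothesis $\mathcal{E}(\mathcal{S})\subseteq\mathcal{S}$ gives $\mathcal{E}(\zeta),\mathcal{E}(\delta)\in\mathcal{S}$, so $g_j(\mathcal{E}(\zeta))\preceq 0$ and $g_j(\mathcal{E}(\delta))\preceq 0$. Lemma~\ref{lemma 3.1 p4}, with the nonnegative weights $\lambda$ and $1-\lambda$, then yields
\[
\lambda g_j(\mathcal{E}(\zeta))+(1-\lambda)g_j(\mathcal{E}(\delta))\preceq \lambda[0,0]+(1-\lambda)[0,0]=[0,0].
\]
Since $\preceq$ is componentwise $\le$ on endpoints, it is transitive, and therefore $g_j(w)\preceq 0$. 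As $j$ was arbitrary, $w\in\mathcal{S}$, which proves that $\mathcal{S}$ is S$\mathcal{E}$I.

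The only real difficulty is formal. Definition~\ref{definition 3.1 p3} presupposes that the domain is already an S$\mathcal{E}$I set, which is circular here. We resolve this by reading the hypothesis on $g_j$ as the defining inequality holding for all $\zeta,\delta\in\mathbb{R}^n$, since each $g_j$ is defined on all of $\mathbb{R}^n$; this makes the argument independent of the conclusion. The inclusion $\mathcal{E}(\mathcal{S})\subseteq\mathcal{S}$ is likewise what substitutes for Lemma~\ref{lemma 2.1 p4}, which cannot be invoked before $\mathcal{S}$ is known to be S$\mathcal{E}$I.
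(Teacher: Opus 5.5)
Your proof is correct and follows the paper's argument: you apply the SLU$\mathcal{E}$P inequality for each $g_j$ at $\zeta,\delta$, use $\mathcal{E}(\mathcal{S})\subseteq\mathcal{S}$ to get $g_j(\mathcal{E}(\zeta))\preceq 0$ and $g_j(\mathcal{E}(\delta))\preceq 0$, and conclude $g_j(w)\preceq 0$. Your appeal to Lemma~\ref{lemma 3.1 p4} and to transitivity makes explicit what the paper leaves implicit, and so does reading the hypothesis as holding on all of $\mathbb{R}^n$ (which is trivially S$\mathcal{E}$I, and is how the paper's proof uses it).
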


\begin{proof}
Since each $g_j$ is SLU$\mathcal{E}$P on $\mathbb{R}^{n}$, for any $\zeta,\, \delta \in \mathcal{S}$, $\alpha \in [0,1]$, and $\lambda \in [0,1]$, we have
\[
g_j\!\big(\alpha\, \delta + \mathcal{E}(\delta) + \lambda\, \Psi(\alpha\, \zeta + \mathcal{E}(\zeta),\, \alpha\, \delta + \mathcal{E}(\delta))\big)
\preceq
\lambda\, g_j(\mathcal{E}(\zeta)) + (1 - \lambda)\, g_j(\mathcal{E}(\delta)).
\]
Now, by the assumption $\mathcal{E}(\mathcal{S}) \subseteq \mathcal{S}$ and the definition of $\mathcal{S}$, it follows that
\[
g_j(\mathcal{E}(\zeta)) \preceq 0 \quad \text{and} \quad g_j(\mathcal{E}(\delta)) \preceq 0.
\]
Therefore, $g_j\!\big(\alpha\, \delta + \mathcal{E}(\delta) + \lambda\, \Psi(\alpha\, \zeta + \mathcal{E}(\zeta),\, \alpha\, \delta + \mathcal{E}(\delta))\big) \preceq 0,
\quad \forall\, 1 \leq j \leq m.$
This shows that
$
\alpha\, \delta + \mathcal{E}(\delta) + \lambda\, \Psi(\alpha\, \zeta + \mathcal{E}(\zeta),\, \alpha\, \delta + \mathcal{E}(\delta)) \in \mathcal{S}.
$
Hence, by Definition \ref{definition 2.9 p4}, $\mathcal{S}$ is S$\mathcal{E}$I.
\end{proof}

\noindent

By using Theorem~3.1 and definition of the $gH$-product from \cite{Tauheed}, we obtain the following lemma.
\begin{lemma}\label{lemma 3.2 p4} 
    Let $\tilde{h}:\mathcal{S}\subseteq\mathbb{R}^n\rightarrow\mathcal{I}(\mathbb{R})$, \( \mathit{\nu}  \in \mathbb{R}^n \), $
    \nabla \tilde{h}(\mathit{\zeta} ) = \left(\frac{\partial \tilde{h}}{\partial \mathit{\zeta}_1}, \frac{\partial \tilde{h}}{\partial \mathit{\zeta}_2} , \ldots, \frac{\partial \tilde{h}}{\partial \mathit{\zeta}_n} \right)^T.$\\
    and $\frac{\partial \tilde{h}^L}{\partial\mathit{\zeta}_i}~ \textit{and}~\frac{\partial \tilde{h}^U}{\partial\mathit{\zeta}_i}$ exist. Then,
    \begin{equation*}
       \langle v, \nabla \tilde{h}(\mathit{\zeta}) \rangle_{gH} =
\begin{cases}
[ \nu \nabla \tilde{h}^L(\mathit{\zeta}),\, \nu \nabla \tilde{h}^U(\mathit{\zeta}) ], & \text{if } \nu \nabla \tilde{h}^L(\mathit{\zeta}) \leq \nu \nabla \tilde{h}^U(\mathit{\zeta}) \\
[ \nu \nabla \tilde{h}^U(\mathit{\zeta}),\, \nu \nabla \tilde{h}^L(\mathit{\zeta}) ], & \text{if } \nu \nabla \tilde{h}^U(\mathit{\zeta}) \leq \nu \nabla \tilde{h}^L(\mathit{\zeta})
\end{cases} 
    \end{equation*}
    
    \begin{proof}:
        \begin{equation*}
        \begin{aligned}
            \langle \mathit{\nu}, \nabla\tilde{h}(\mathit{\zeta}) \rangle_{gH} &= \sum_{i \in j^+} \mathit{\nu}_i \frac{\partial \tilde{h}}{\partial \mathit{\zeta}_i} \,\ominus_{gH}\, \sum_{k \in j^-} |\mathit{\nu}_k| \frac{\partial \tilde{h}}{\partial \mathit{\zeta}_k}.\\
        &= \left[ \sum_{i \in j^+} \mathit{\nu}_i \frac{\partial \tilde{h}^L}{\partial \mathit{\zeta}_i} , \sum_{i \in j^+} \mathit{\nu}_i \frac{\partial \tilde{h}^U}{\partial \mathit{\zeta}_i} \right]
\,\ominus_{gH}\,
\left[ \sum_{k \in j^-} |\mathit{\nu}_k| \frac{\partial \tilde{h}^L}{\partial \mathit{\zeta}_k} , \sum_{k \in j^-} |\mathit{\nu}_k| \frac{\partial \tilde{h}^U}{\partial \mathit{\zeta}_k} \right].\\
&= \left[
\min \{ p, q \},\, \max \{ p, q \}
\right],
\quad \\
&\text{where}
\quad
p = \sum_{i \in j^+} \mathit{\nu}_i \frac{\partial \tilde{h}^L}{\partial \mathit{\zeta}_i} - \sum_{k \in j^-} |\mathit{\nu}_k| \frac{\partial \tilde{h}^L}{\partial \mathit{\zeta}_k},
\quad
q = \sum_{i \in j^+} \mathit{\nu}_i \frac{\partial \tilde{h}^U}{\partial \mathit{\zeta}_i} - \sum_{k \in j^-} |\mathit{\nu}_k| \frac{\partial \tilde{h}^U}{\partial \mathit{\zeta}_k}.
        \end{aligned}
    \end{equation*}
\textbf{Case 1:} \( p \leq q \)

\begin{equation*}
    \begin{aligned}
        \langle \mathit{\nu}, \nabla\tilde{h}(\mathit{\zeta}) \rangle_{gh}
&= \left[
\sum_{i \in j^+} \mathit{\nu}_i \frac{\partial \tilde{h}^L}{\partial \mathit{\zeta}_i} - \sum_{k \in j^-} |\mathit{\nu}_k| \frac{\partial \tilde{h}^L}{\partial \mathit{\zeta}_k},\sum_{i \in j^+} \mathit{\nu}_i \frac{\partial \tilde{h}^U}{\partial \mathit{\zeta}_i} - \sum_{k \in j^-} |\mathit{\nu}_k| \frac{\partial \tilde{h}^U}{\partial \mathit{\zeta}_k}
\right]\\
&= [\mathit{\nu}\nabla\tilde{h}^L(\mathit{\zeta}),\mathit{\nu}\nabla\tilde{h}^U(\mathit{\zeta})],
    \end{aligned}
\end{equation*}

\textbf{Case 2:} \( p > q \)

\[
\langle \mathit{\nu}, \nabla\tilde{h}(\mathit{\zeta}) \rangle_{gH} = [ \mathit{\nu}\nabla\tilde{h}^U(\mathit{\zeta}),\, \mathit{\nu}\nabla\tilde{h}^L(\mathit{\zeta}) ].
\]
    
    \end{proof}

\end{lemma}
        
      Now, we define the strongly LU-$\mathcal{E}$-invex (SLU$\mathcal{E}$I) and weakly S$\mathcal{E}I$ function on S$\mathcal{E}I$ set as follows:  
\begin{definition}  \label{definition 3.6 p4}
    Let $\mathcal{S} \subseteq \mathbb{R}^n$ be a S$\mathcal{\mathcal{E}}$I set w.r.t. $ {\Psi}$. Suppose $\tilde{h} : \mathcal{S} \rightarrow \mathcal{I}(\mathbb{R})$ is a weakly differentiable IVF, expressed as $\tilde{h}(\mathit{\zeta} ) = [\tilde{h}^L(\mathit{\zeta} ), \tilde{h}^U(\mathit{\zeta} )]$, then $\tilde{h}$ is said to be weakly S$\mathcal{\mathcal{E}}$I functions w.r.t. $ {\Psi}$ on $ \mathcal{S}$ if, $\forall$ $\mathit{\zeta} ,\mathit{\delta}  \in \mathcal{S}$, the following inequalities hold:
\begin{equation} 
    \begin{aligned}
        \tilde{h}^L(\mathcal{\mathcal{E}}(\mathit{\zeta} )) - \tilde{h}^L(\mathcal{\mathcal{E}}(\mathit{\delta} )) 
        &\geq \Psi({\alpha} \mathit{\zeta} + \mathcal{\mathcal{E}}(\mathit{\zeta} )), {\alpha}\mathit{\delta} + \mathcal{\mathcal{E}}(\mathit{\delta} )))^{\!\top}\nabla \tilde{h}^L(\mathcal{\mathcal{E}}(\mathit{\delta} )),
    \end{aligned}
\end{equation}
\begin{equation}
    \begin{aligned}
        \tilde{h}^U(\mathcal{\mathcal{E}}(\mathit{\zeta} )) - \tilde{h}^U(\mathcal{\mathcal{E}}(\mathit{\delta} )) 
        &\geq \Psi({\alpha} \mathit{\zeta} + \mathcal{\mathcal{E}}(\mathit{\zeta} )), {\alpha}\mathit{\delta} + \mathcal{\mathcal{E}}(\mathit{\delta} )))^{\!\top}\nabla \tilde{h}^U(\mathcal{\mathcal{E}}(\mathit{\delta} )).
    \end{aligned}
\end{equation}

\end{definition}

\begin{definition} \label{definition 3.7 p4}
Let $\mathcal{S}\subseteq  \mathbb{R}^{n}$ be an open S$\mathcal{\mathcal{E}}$I set and  $\tilde{h}\colon \mathcal{S}\rightarrow \mathcal{I}(\mathbb{R})$ be a gH differentiable function on $\mathcal{S}$. Then, $\tilde{h}$ is  called  SLU$\mathcal{\mathcal{E}}$I w.r.t. $ {\Psi}$ on $ \mathcal{S}$, if
\begin{eqnarray*}
		\langle\ {\Psi}( {\alpha} \mathit{\zeta} + \mathcal{\mathcal{E}}(\mathit{\zeta} ), {\alpha}\mathit{\delta} + \mathcal{\mathcal{E}}(\mathit{\delta} )),\nabla  \tilde{h}( \mathcal{\mathcal{E}}(\mathit{\delta} ))\rangle_{gH}\preceq  \tilde{h}( \mathcal{\mathcal{E}}(\mathit{\zeta} )))\ominus_{gH} \tilde{h}( \mathcal{\mathcal{E}}(\mathit{\delta} ))),
\end{eqnarray*}
  $\forall \mathit{\zeta} ,\mathit  t\in \mathcal {S}$ and $ {\alpha}\in {[0,1]} $.\\
 In the context of real-valued functions, when $\alpha = 0$, the function $\tilde{h}$ reduces to  $\mathcal{E}$-invex function introduced by Jaiswal et al. \cite{Jaiswal}. 
\end{definition} 

The following theorem demonstrates that every weakly S$\mathcal{\mathcal{E}}$I function is also SLU$\mathcal{\mathcal{E}}$I.
\begin{theorem} \label{theorem 3.9 p4}
Let $\mathcal{S}$ be a S$\mathcal{\mathcal{E}}$I subset of $\mathbb{R}^n$  with respect to the mapping $\Psi : \mathcal{S} \times \mathcal{S} \rightarrow \mathbb{R}^n$. If a weakly differentiable IVF $\tilde{h}(\mathit{\zeta} ) = [\tilde{h}^L(\mathit{\zeta} ), \tilde{h}^U(\mathit{\zeta} )]$ is weakly S$\mathcal{\mathcal{E}}$I function, then it is SLU$\mathcal{\mathcal{E}}$I.
\begin{proof}
   Because $\tilde{h} $ is weakly S$\mathcal{\mathcal{E}}$I, then the by Definition \ref{definition 3.6 p4}, we have

\begin{align}
\label{eq 3.3 p2}\tilde{h}^L(\mathcal{\mathcal{E}}(\mathit{\zeta} )) - \tilde{h}^L(\mathcal{\mathcal{E}}(\mathit{\delta} )) &\geq \Psi(\alpha \mathit{\zeta} +\mathcal{\mathcal{E}}(\mathit{\zeta} ),\alpha \mathit{\delta} +\mathcal{\mathcal{E}}(\mathit{\delta} ))^{\!\top} \nabla \tilde{h}^L(\mathcal{\mathcal{E}}(\mathit{\delta} )), \\
\label{eq 3.4 p2}\tilde{h}^U(\mathcal{\mathcal{E}}(\mathit{\zeta} )) - \tilde{h}^U(\mathcal{\mathcal{E}}(\mathit{\delta} )) &\geq \Psi(\alpha \mathit{\zeta} +\mathcal{\mathcal{E}}(\mathit{\zeta} ),\alpha \mathit{\delta} +\mathcal{\mathcal{E}}(\mathit{\delta} ))^{\!\top} \nabla \tilde{h}^U(\mathcal{\mathcal{E}}(\mathit{\delta} )),
\end{align}

for each $\lambda \in [0,1]$ and each $\mathit{\zeta}  \in \mathcal{S}$.\\
\textbf{Case (I)}:
\begin{equation*}
    \Psi(\alpha \mathit{\zeta} +\mathcal{\mathcal{E}}(\mathit{\zeta} ),\alpha \mathit{\delta} +\mathcal{\mathcal{E}}(\mathit{\delta} ))^{\!\top} \nabla \tilde{h}^L(\mathcal{\mathcal{E}}(\mathit{\delta} )) \leq \Psi(\alpha \mathit{\zeta} +\mathcal{\mathcal{E}}(\mathit{\zeta} ),\alpha \mathit{\delta} +\mathcal{\mathcal{E}}(\mathit{\delta} ))^{\!\top} \nabla \tilde{h}^U(\mathcal{\mathcal{E}}(\mathit{\delta} )),
\end{equation*} 
From Lemma \ref{lemma 3.2 p4} we have,

\begin{equation*}
\begin{aligned}
    \langle\ {\Psi}( {\alpha} \mathit{\zeta} + \mathcal{\mathcal{E}}(\mathit{\zeta} ), {\alpha}\mathit{\delta} + \mathcal{\mathcal{E}}(\mathit{\delta} )),\nabla  \tilde{h}( \mathcal{\mathcal{E}}(\mathit{\delta} ))\rangle_{gH}
    = \big[
    &\Psi(\alpha \mathit{\zeta}  + \mathcal{\mathcal{E}}(\mathit{\zeta} ),\, \alpha \mathit{\delta}  + \mathcal{\mathcal{E}}(\mathit{\delta} ))^{\!\top}
    \nabla \tilde{h}^L(\mathcal{\mathcal{E}}(\mathit{\delta} )), \\
    &\Psi(\alpha \mathit{\zeta}  + \mathcal{\mathcal{E}}(\mathit{\zeta} ),\, \alpha \mathit{\delta}  + \mathcal{\mathcal{E}}(\mathit{\delta} ))^{\!\top}
    \nabla \tilde{h}^U(\mathcal{\mathcal{E}}(\mathit{\delta} )) 
    \big].
\end{aligned}
\end{equation*}

Now,
\begin{equation*}
\begin{aligned}
\tilde{h}(\mathcal{E}(\zeta)) \ominus_{gH} \tilde{h}(\mathcal{E}(\delta))
= \Big[
    &\min\!\Big\{
        \tilde{h}^L(\mathcal{E}(\zeta)) - \tilde{h}^L(\mathcal{E}(\delta)),\,
        \tilde{h}^U(\mathcal{E}(\zeta)) - \tilde{h}^U(\mathcal{E}(\delta))
    \Big\}, \\
    &\max\!\Big\{
        \tilde{h}^L(\mathcal{E}(\zeta)) - \tilde{h}^L(\mathcal{E}(\delta)),\,
        \tilde{h}^U(\mathcal{E}(\zeta)) - \tilde{h}^U(\mathcal{E}(\delta))
    \Big\}
\Big].
\end{aligned}
\end{equation*}

If, $\tilde{h}^L(\mathcal{\mathcal{E}}(\mathit{\zeta} )) - \tilde{h}^L(\mathcal{\mathcal{E}}(\mathit{\delta} )) \leq \tilde{h}^U(\mathcal{\mathcal{E}}(\mathit{\zeta} )) - \tilde{h}^U(\mathcal{\mathcal{E}}(\mathit{\delta} ))$
Then,
\begin{equation*}
   \tilde{h}(\mathcal{\mathcal{E}}(\mathit{\zeta} )) \ominus_{gH}\tilde{h}(\mathcal{\mathcal{E}}(\mathit{\delta} )) = [\tilde{h}^L(\mathcal{\mathcal{E}}(\mathit{\zeta} )) - \tilde{h}^L(\mathcal{\mathcal{E}}(\mathit{\delta} )), \tilde{h}^U(\mathcal{\mathcal{E}}(\mathit{\zeta} )) - \tilde{h}^U(\mathcal{\mathcal{E}}(\mathit{\delta} ))],
\end{equation*} 
then from (\ref{eq 3.3 p2}) and (\ref{eq 3.4 p2}), we have

\begin{equation*}
   \tilde{h}(\mathcal{\mathcal{E}}(\mathit{\zeta} )) \ominus_{gH}\tilde{h}(\mathcal{\mathcal{E}}(\mathit{\delta} )) \succeq \langle\ {\Psi}( {\alpha} \mathit{\zeta} + \mathcal{\mathcal{E}}(\mathit{\zeta} ), {\alpha}\mathit{\delta} + \mathcal{\mathcal{E}}(\mathit{\delta} )),\nabla  \tilde{h}( \mathcal{\mathcal{E}}(\mathit{\delta} ))\rangle_{gH}.
\end{equation*}
Hence $\tilde{h}$ is SLU$\mathcal{\mathcal{E}}$I.\\
\vspace{0.1cm}

Now, If, $\tilde{h}^L(\mathcal{\mathcal{E}}(\mathit{\zeta} )) - \tilde{h}^L(\mathcal{\mathcal{E}}(\mathit{\delta} )) \geq \tilde{h}^U(\mathcal{\mathcal{E}}(\mathit{\zeta} )) - \tilde{h}^U(\mathcal{\mathcal{E}}(\mathit{\delta} ))$
Then,
\begin{equation*}
   \tilde{h}(\mathcal{\mathcal{E}}(\mathit{\zeta} )) \ominus_{gH}\tilde{h}(\mathcal{\mathcal{E}}(\mathit{\delta} )) = [\tilde{h}^U(\mathcal{\mathcal{E}}(\mathit{\zeta} )) - \tilde{h}^U(\mathcal{\mathcal{E}}(\mathit{\delta} )), \tilde{h}^L(\mathcal{\mathcal{E}}(\mathit{\zeta} )) - \tilde{h}^L(\mathcal{\mathcal{E}}(\mathit{\delta} ))],
\end{equation*} 
then

\begin{equation*}
    \begin{aligned}
        \tilde{h}^L(\mathcal{\mathcal{E}}(\mathit{\zeta} )) - \tilde{h}^L(\mathcal{\mathcal{E}}(\mathit{\delta} )) 
&\geq \tilde{h}^U(\mathcal{\mathcal{E}}(\mathit{\zeta} )) - \tilde{h}^U(\mathcal{\mathcal{E}}(\mathit{\delta} )) \\
&\geq \Psi(\alpha \mathit{\zeta} +\mathcal{\mathcal{E}}(\mathit{\zeta} ),\alpha \mathit{\delta} +\mathcal{\mathcal{E}}(\mathit{\delta} ))^{\!\top}\nabla \tilde{h}^U(\mathcal{\mathcal{E}}(\mathit{\delta} )) \\
&\geq \Psi(\alpha \mathit{\zeta} +\mathcal{\mathcal{E}}(\mathit{\zeta} ),\alpha \mathit{\delta} +\mathcal{\mathcal{E}}(\mathit{\delta} ))^{\!\top}\nabla \tilde{h}^L(\mathcal{\mathcal{E}}(\mathit{\delta} )).
    \end{aligned}
\end{equation*}
Thus,
\begin{equation*}
   \tilde{h}(\mathcal{\mathcal{E}}(\mathit{\zeta} )) \ominus_{gH}\tilde{h}(\mathcal{\mathcal{E}}(\mathit{\delta} )) \succeq \langle\ {\Psi}( {\alpha} \mathit{\zeta} + \mathcal{\mathcal{E}}(\mathit{\zeta} ), {\alpha}\mathit{\delta} + \mathcal{\mathcal{E}}(\mathit{\delta} )),\nabla  \tilde{h}( \mathcal{\mathcal{E}}(\mathit{\delta} ))\rangle_{gH}.
\end{equation*}

\textbf{Case (II)}:
\begin{equation*}
    \Psi(\alpha \mathit{\zeta} +\mathcal{\mathcal{E}}(\mathit{\zeta} ),\alpha \mathit{\delta} +\mathcal{\mathcal{E}}(\mathit{\delta} ))^{\!\top}\nabla \tilde{h}^L(\mathcal{\mathcal{E}}(\mathit{\delta} )) \geq \Psi(\alpha \mathit{\zeta} +\mathcal{\mathcal{E}}(\mathit{\zeta} ),\alpha \mathit{\delta} +\mathcal{\mathcal{E}}(\mathit{\delta} ))^{\!\top}\nabla \tilde{h}^U(\mathcal{\mathcal{E}}(\mathit{\delta} )),
\end{equation*} 
From Lemma \ref{lemma 3.2 p4} we have,

\begin{equation*}
\begin{aligned}
    \langle\ {\Psi}( {\alpha} \mathit{\zeta} + \mathcal{\mathcal{E}}(\mathit{\zeta} ), {\alpha}\mathit{\delta} + \mathcal{\mathcal{E}}(\mathit{\delta} )),\nabla  \tilde{h}( \mathcal{\mathcal{E}}(\mathit{\delta} ))\rangle_{gH}
    = \big[
    &\Psi(\alpha \mathit{\zeta}  + \mathcal{\mathcal{E}}(\mathit{\zeta} ),\, \alpha \mathit{\delta}  + \mathcal{\mathcal{E}}(\mathit{\delta} ))^{\!\top}
    \nabla \tilde{h}^U(\mathcal{\mathcal{E}}(\mathit{\delta} )), \\
    &\Psi(\alpha \mathit{\zeta}  + \mathcal{\mathcal{E}}(\mathit{\zeta} ),\, \alpha \mathit{\delta}  + \mathcal{\mathcal{E}}(\mathit{\delta} ))^{\!\top}
    \nabla \tilde{h}^L(\mathcal{\mathcal{E}}(\mathit{\delta} )) 
    \big].
\end{aligned}
\end{equation*}

If 
\begin{equation*}
   \tilde{h}(\mathcal{\mathcal{E}}(\mathit{\zeta} )) \ominus_{gH}\tilde{h}(\mathcal{\mathcal{E}}(\mathit{\delta} )) = [\tilde{h}^U(\mathcal{\mathcal{E}}(\mathit{\zeta} )) - \tilde{h}^U(\mathcal{\mathcal{E}}(\mathit{\delta} )), \tilde{h}^L(\mathcal{\mathcal{E}}(\mathit{\zeta} )) - \tilde{h}^L(\mathcal{\mathcal{E}}(\mathit{\delta} ))],
\end{equation*} 
then (\ref{eq 3.3 p2}) and (\ref{eq 3.4 p2}), we have

\begin{equation*}
   \tilde{h}(\mathcal{\mathcal{E}}(\mathit{\zeta} )) \ominus_{gH}\tilde{h}(\mathcal{\mathcal{E}}(\mathit{\delta} )) \succeq \langle\ {\Psi}( {\alpha} \mathit{\zeta} + \mathcal{\mathcal{E}}(\mathit{\zeta} ), {\alpha}\mathit{\delta} + \mathcal{\mathcal{E}}(\mathit{\delta} )),\nabla  \tilde{h}( \mathcal{\mathcal{E}}(\mathit{\delta} ))\rangle_{gH}.
\end{equation*}

If 
\begin{equation*}
   \tilde{h}(\mathcal{\mathcal{E}}(\mathit{\zeta} )) \ominus_{gH}\tilde{h}(\mathcal{\mathcal{E}}(\mathit{\delta} ) ) = [\tilde{h}^L(\mathcal{\mathcal{E}}(\mathit{\zeta} )) - \tilde{h}^L(\mathcal{\mathcal{E}}(\mathit{\delta} )), \tilde{h}^U(\mathcal{\mathcal{E}}(\mathit{\zeta} )) - \tilde{h}^U(\mathcal{\mathcal{E}}(\mathit{\delta} ))],
\end{equation*} 
then

\begin{equation*}
    \begin{aligned}
        \tilde{h}^U(\mathcal{\mathcal{E}}(\mathit{\zeta} )) - \tilde{h}^U(\mathcal{\mathcal{E}}(\mathit{\delta} )) 
&\geq \tilde{h}^L(\mathcal{\mathcal{E}}(\mathit{\zeta} )) - \tilde{h}^L(\mathcal{\mathcal{E}}(\mathit{\delta} )) \\
&\geq \Psi(\alpha \mathit{\zeta} +\mathcal{\mathcal{E}}(\mathit{\zeta} ),\alpha \mathit{\delta} +\mathcal{\mathcal{E}}(\mathit{\delta} ))^{\!\top}\nabla \tilde{h}^L(\mathcal{\mathcal{E}}(\mathit{\delta} )) \\
&\geq \Psi(\alpha \mathit{\zeta} +\mathcal{\mathcal{E}}(\mathit{\zeta} ),\alpha \mathit{\delta} +\mathcal{\mathcal{E}}(\mathit{\delta} ))^{\!\top}\nabla \tilde{h}^U(\mathcal{\mathcal{E}}(\mathit{\delta} )).
    \end{aligned}
\end{equation*}
Thus,
\begin{equation*}
   \tilde{h}(\mathcal{\mathcal{E}}(\mathit{\zeta} )) \ominus_{gH}\tilde{h}(\mathcal{\mathcal{E}}(\mathit{\delta} )) \succeq \langle\ {\Psi}( {\alpha} \mathit{\zeta} + \mathcal{\mathcal{E}}(\mathit{\zeta} ), {\alpha}\mathit{\delta} + \mathcal{\mathcal{E}}(\mathit{\delta} )),\nabla  \tilde{h}( \mathcal{\mathcal{E}}(\mathit{\delta} ))\rangle_{gH}.
\end{equation*}

\end{proof}
\end{theorem}
The following example validate the Theorem \ref{theorem 3.9 p4}

 \begin{example}
     Consider an IVF $\tilde{h}: \mathcal{S} \to \mathcal{I}(\mathbb{R})$ defined on S$\mathcal{\mathcal{E}}$I set $\mathcal{S}=[ln(2),\infty)$,
   \begin{equation*}
       \tilde{h}(\mathit{\zeta} )=[4\mathit{\zeta} -8ln(\mathit{\zeta} ),8\mathit{\zeta} -16ln(\mathit{\zeta} )] , ~~~\forall \mathit{\zeta}  \in \mathcal{S},
   \end{equation*}   
and let $\mathcal{\mathcal{E}}:\mathbb{R} \to\mathbb{R}$ be a map defined as $\mathcal{\mathcal{E}}(\mathit{\zeta} ) = e^{\mathit{\zeta} }$ $\forall$ $\mathit{\zeta}  \in \mathbb{R}$.
Also, let $\Psi: \mathbb{R} \times \mathbb{R} \to \mathbb{R}$ be defined as:
    $\Psi(\mathit{\zeta} ,\mathit{\delta} )=-1$.\\ 
Then, by Definition~\ref{definition 3.6 p4}, $\tilde{h}(\zeta)$ is weakly S$\mathcal{\mathcal{E}}$I. Also, by Definition \ref{definition 3.7 p4}, the function $\tilde{h}(\zeta)$ is SLU$\mathcal{\mathcal{E}}$I function. 
 \end{example} 
Although Theorem~\ref{theorem 3.9 p4} establishes a significant result, its converse is not valid in all situations. To illustrate this, we now present the following counterexample.

\begin{example} \label{example3.6}
    Let $\tilde{h} : \mathbb{R} \rightarrow \mathcal{I}(\mathbb{R})$ be IVF defined by 
$\tilde{h}(\zeta) = [-|\zeta|,\, |\zeta|]$. 
Consider the mapping $\mathcal{E} : \mathbb{R} \to \mathbb{R}$ given by $\mathcal{E}(\zeta) = -1$. 
Furthermore, let $\Psi : \mathbb{R} \times \mathbb{R} \to \mathbb{R}$ be defined as
\[
\Psi(\zeta, \delta) =
\begin{cases}
    \zeta + \delta, & \text{if } \zeta,\, \delta \geq 0 \text{ or } \zeta,\, \delta \leq 0, \\
    -\delta, & \text{otherwise}.
\end{cases}
\]
It can be observed that $\tilde{h}$ exhibits the properties of an SLU$\mathcal{E}$I function, whereas it fails to meet the criteria of a weakly S$\mathcal{E}$I function.

\begin{figure}[H]
		\centering
		\includegraphics[width=0.9\linewidth]{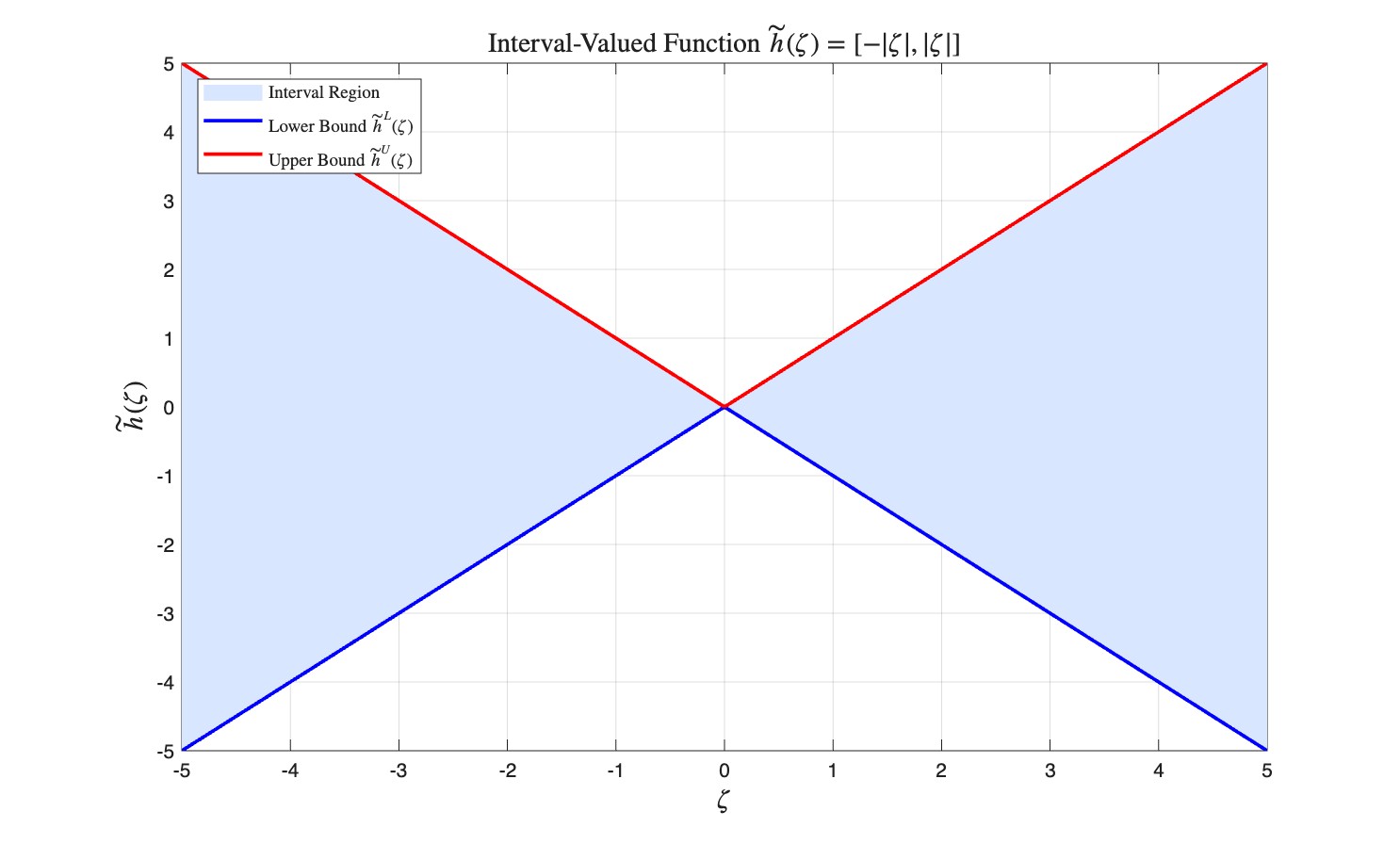}
		\label{fig:placeholder}
		\caption{The function $\tilde{h}$ is SLU$\mathcal{E}$I but not weakly S$\mathcal{E}$I with respect to the mappings $\mathcal{E}$ and $\Psi$ given in Example~\ref{example3.6}.
}
\end{figure}

\end{example}

     Inspired by the work of Mohan et al. \cite{Neogy} and Kumari et al. \cite{Babli}, Iqbal and Hussain \cite{Iqbal2022} proposed the notion of $\mathbf{Condition\ A}$, defined as follows:

     	\noindent
\textbf{Condition A.}\label{Condition A} 
Let ${\mathcal{E}}: {\mathcal{S}} \to {\mathcal{S}}$ be an onto mapping, where ${\mathcal{S}} \subseteq \mathbb{R}^{n}$ is a S$\mathcal{E}$I set with respect to ${\Psi}: \mathbb{R}^{n} \times \mathbb{R}^{n} \to \mathbb{R}^{n}$. Assume that for every $\zeta,\, \delta \in {\mathcal{S}}$, and for any ${\alpha}, {\lambda} \in [0,1]$, there exists $\bar{\delta} \in {\mathcal{S}}$ such that
\[
{\mathcal{E}}(\bar{\delta}) = {\alpha}\, \delta + {\mathcal{E}}(\delta) + {\lambda}\, {\Psi}\big({\alpha}\, \zeta + {\mathcal{E}}(\zeta),\, {\alpha}\, \delta + {\mathcal{E}}(\delta)\big) \in {\mathcal{S}}.
\]
The mapping ${\Psi}$ is said to satisfy \textbf{Condition A} if the following hold simultaneously:
\[
\quad
{\Psi}\big({\alpha}\, \delta + {\mathcal{E}}(\delta),\, {\alpha}\, \bar{\delta} + {\mathcal{E}}(\bar{\delta})\big)
= -{\lambda}\Big({\alpha}\, \bar{\delta} + {\Psi}\big({\alpha}\, \zeta + {\mathcal{E}}(\zeta),\, {\alpha}\, \delta + {\mathcal{E}}(\delta)\big)\Big),
\]
\[
\quad
{\Psi}\big({\alpha}\, \zeta + {\mathcal{E}}(\zeta),\, {\alpha}\, \bar{\delta} + {\mathcal{E}}(\bar{\delta})\big)
= (1 - {\lambda})\Big({\alpha}\, \bar{\delta} + {\Psi}\big({\alpha}\, \zeta + {\mathcal{E}}(\zeta),\, {\alpha}\, \delta + {\mathcal{E}}(\delta)\big)\Big).
\]

\medskip
\noindent
If ${\alpha} = 0$ and ${\mathcal{E}}(\zeta) = \zeta$, then \textbf{Condition A} reduces to the \textbf{Condition C} given by Mohan and Neogy~\cite{Neogy}.

\medskip
\noindent
In the following theorem, we present a significant relationship between weakly S$\mathcal{E}$I function and SLU$\mathcal{E}$P function.
\begin{theorem}
	Let $\mathcal{S} \subseteq \mathbb{R}^{n}$ be an open S$\mathcal{E}$I set with respect to the mapping $\Psi$ and a mapping $\mathcal{E} : \mathcal{S} \to \mathcal{S}$. Suppose that the function $\tilde{h} : \mathcal{S} \to \mathcal{I}(\mathbb{R})$ is weakly differentiable and satisfies the weakly S$\mathcal{E}$I condition with respect to $\Psi$ on $\mathcal{S}$. If, in addition, $\Psi$ satisfies \textbf{Condition A}, then $\tilde{h}$ is a SLU$\mathcal{E}$P function with respect to $\Psi$ on $\mathcal{S}$.
\end{theorem}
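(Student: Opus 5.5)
The plan is to reduce the statement to the two endpoint functions and then use the classical Mohan--Neogy argument, in the strongly $\mathcal{E}$ form used by Iqbal and Hussain. By Theorem~\ref{theorem 3.1 p4}, $\tilde{h}$ is SLU$\mathcal{E}$P exactly when both $\tilde{h}^L$ and $\tilde{h}^U$ satisfy the real-valued strongly $\mathcal{E}$-preinvex inequality with respect to $\Psi$. Weak S$\mathcal{E}$I-ness (Definition~\ref{definition 3.6 p4}) gives a gradient inequality for each endpoint separately. So it suffices to prove the following for a single differentiable real function $\phi\in\{\tilde{h}^L,\tilde{h}^U\}$: if
\[
\phi(\mathcal{E}(\zeta))-\phi(\mathcal{E}(\delta))\ge \Psi(\alpha\zeta+\mathcal{E}(\zeta),\alpha\delta+\mathcal{E}(\delta))^{\top}\nabla\phi(\mathcal{E}(\delta))
\]
holds for all $\zeta,\delta\in\mathcal{S}$ and $\alpha\in[0,1]$, then $\phi$ is strongly $\mathcal{E}$-preinvex.

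Fix $\zeta,\delta\in\mathcal{S}$ and $\alpha,\lambda\in[0,1]$. Condition A provides $\bar\delta\in\mathcal{S}$ with
\[
\mathcal{E}(\bar\delta)=\alpha\delta+\mathcal{E}(\delta)+\lambda\Psi(\alpha\zeta+\mathcal{E}(\zeta),\alpha\delta+\mathcal{E}(\delta)).
\]
Apply the gradient inequality at the base point $\bar\delta$ twice.

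\begin{itemize}
\item First with the pair $(\zeta,\bar\delta)$:
\[
\phi(\mathcal{E}(\zeta))-\phi(\mathcal{E}(\bar\delta))\ge \Psi(\alpha\zeta+\mathcal{E}(\zeta),\alpha\bar\delta+\mathcal{E}(\bar\delta))^{\top}\nabla\phi(\mathcal{E}(\bar\delta)).
\]
\item Second with the pair $(\delta,\bar\delta)$:
\[
\phi(\mathcal{E}(\delta))-\phi(\mathcal{E}(\bar\delta))\ge \Psi(\alpha\delta+\mathcal{E}(\delta),\alpha\bar\delta+\mathcal{E}(\bar\delta))^{\top}\nabla\phi(\mathcal{E}(\bar\delta)).
\]
\end{itemize}

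By the two identities of Condition A, these $\Psi$-terms equal $(1-\lambda)w$ and $-\lambda w$ respectively, where
\[
w=\alpha\bar\delta+\Psi(\alpha\zeta+\mathcal{E}(\zeta),\alpha\delta+\mathcal{E}(\delta)).
\]
Multiply the first inequality by $\lambda$ and the second by $(1-\lambda)$, then add. The right-hand sides combine to $\lambda(1-\lambda)w^{\top}\nabla\phi-(1-\lambda)\lambda w^{\top}\nabla\phi=0$. This yields
\[
\lambda\phi(\mathcal{E}(\zeta))+(1-\lambda)\phi(\mathcal{E}(\delta))\ge\phi(\mathcal{E}(\bar\delta)),
\]
and $\phi(\mathcal{E}(\bar\delta))$ is exactly $\phi$ evaluated at $\alpha\delta+\mathcal{E}(\delta)+\lambda\Psi(\cdot,\cdot)$, which is the required inequality. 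Doing this for $\phi=\tilde{h}^L$ and $\phi=\tilde{h}^U$ and invoking Theorem~\ref{theorem 3.1 p4} then gives the SLU$\mathcal{E}$P inequality in the LU order.

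The main obstacle is bookkeeping rather than analysis, and it has two parts.

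\begin{itemize}
\item The weakly S$\mathcal{E}$I inequality must be applicable with $\bar\delta$ in the second slot and with the same $\alpha$. This needs $\bar\delta\in\mathcal{S}$, which Condition A guarantees; openness of $\mathcal{S}$ ensures that $\nabla\phi(\mathcal{E}(\bar\delta))$ exists, using $\mathcal{E}(\mathcal{S})\subseteq\mathcal{S}$.
\item The Condition A expressions must match literally the first argument of $\Psi$ in Definition~\ref{definition 3.6 p4}, namely $\alpha\zeta+\mathcal{E}(\zeta)$ versus $\alpha\delta+\mathcal{E}(\delta)$. I expect this to go through as written.
\end{itemize}

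The only other delicacy is that the combination is done endpoint-wise on real numbers, so no gH-arithmetic is needed.
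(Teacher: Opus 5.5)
Your proposal is correct and follows essentially the same route as the paper: pick $\bar\delta$ with $\mathcal{E}(\bar\delta)=\alpha\delta+\mathcal{E}(\delta)+\lambda\Psi(\alpha\zeta+\mathcal{E}(\zeta),\alpha\delta+\mathcal{E}(\delta))$, apply the weakly S$\mathcal{E}$I inequalities endpoint-wise for the pairs $(\zeta,\bar\delta)$ and $(\delta,\bar\delta)$, and combine them with weights $\lambda$ and $1-\lambda$ so that Condition A cancels the $\Psi$-terms, then conclude via Theorem~\ref{theorem 3.1 p4}. Your explicit check that the two Condition A identities give $\lambda(1-\lambda)w-(1-\lambda)\lambda w=0$ spells out the cancellation that the paper only asserts.
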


\begin{proof}
    Let $\zeta,\, \delta \in \mathcal{S}$. Since $\mathcal{S}$ is an S$\mathcal{E}$I set and $\mathcal{E}$ is onto, there exists $\bar{\delta} \in \mathcal{S}$ such that
    \begin{equation}\label{eq:bar_delta}
        \mathcal{E}(\bar{\delta}) 
        = \alpha\, \delta + \mathcal{E}(\delta) + \lambda\, \Psi\big(\alpha \zeta + \mathcal{E}(\zeta),\, \alpha \delta + \mathcal{E}(\delta)\big)
        \in \mathcal{S},
        \quad \forall\, \alpha, \lambda \in [0,1].
    \end{equation}

    Since $\tilde{h}$ is weakly S$\mathcal{E}$I, we have:
    \begin{align}
        \tilde{h}^L(\mathcal{E}(\zeta)) - \tilde{h}^L(\mathcal{E}(\bar{\delta}))
        &\geq 
        \Psi\big(\alpha \zeta + \mathcal{E}(\zeta),\, \alpha \bar{\delta} + \mathcal{E}(\bar{\delta})\big)^{\!\top}
        \nabla \tilde{h}^L(\mathcal{E}(\bar{\delta})), 
        \label{eq:ineq1L} \\
        \tilde{h}^U(\mathcal{E}(\zeta)) - \tilde{h}^U(\mathcal{E}(\bar{\delta}))
        &\geq 
        \Psi\big(\alpha \zeta + \mathcal{E}(\zeta),\, \alpha \bar{\delta} + \mathcal{E}(\bar{\delta})\big)^{\!\top}
        \nabla \tilde{h}^U(\mathcal{E}(\bar{\delta})),
        \label{eq:ineq1U}
    \end{align}
    and also,
    \begin{align}
        \tilde{h}^L(\mathcal{E}(\delta)) - \tilde{h}^L(\mathcal{E}(\bar{\delta}))
        &\geq 
        \Psi\big(\alpha \delta + \mathcal{E}(\delta),\, \alpha \bar{\delta} + \mathcal{E}(\bar{\delta})\big)^{\!\top}
        \nabla \tilde{h}^L(\mathcal{E}(\bar{\delta})),
        \label{eq:ineq2L} \\[6pt]
        \tilde{h}^U(\mathcal{E}(\delta)) - \tilde{h}^U(\mathcal{E}(\bar{\delta}))
        &\geq 
        \Psi\big(\alpha \delta + \mathcal{E}(\delta),\, \alpha \bar{\delta} + \mathcal{E}(\bar{\delta})\big)^{\!\top}
        \nabla \tilde{h}^U(\mathcal{E}(\bar{\delta})).
        \label{eq:ineq2U}
    \end{align}

    Multiplying \eqref{eq:ineq1L} by $\lambda$ and \eqref{eq:ineq2L} by $1-\lambda$, then adding, gives:
    \begin{align}
        &\lambda\big[\tilde{h}^L(\mathcal{E}(\zeta)) - \tilde{h}^L(\mathcal{E}(\bar{\delta}))\big] 
        + (1-\lambda)\big[\tilde{h}^L(\mathcal{E}(\delta)) - \tilde{h}^L(\mathcal{E}(\bar{\delta}))\big] \notag\\
        &\quad \geq 
        \Big[
        \lambda\, \Psi\big(\alpha \zeta + \mathcal{E}(\zeta),\, \alpha \bar{\delta} + \mathcal{E}(\bar{\delta})\big) 
        + (1-\lambda)\, \Psi\big(\alpha \delta + \mathcal{E}(\delta),\, \alpha \bar{\delta} + \mathcal{E}(\bar{\delta})\big)
        \Big]^{\!\top} 
        \nabla \tilde{h}^L(\mathcal{E}(\bar{\delta})). 
        \label{eq:combineL}
    \end{align}

    Using \textbf{Condition A}, we have
    \[
    \lambda\, \Psi\big(\alpha \zeta + \mathcal{E}(\zeta),\, \alpha \bar{\delta} + \mathcal{E}(\bar{\delta})\big) 
    + (1-\lambda)\, \Psi\big(\alpha \delta + \mathcal{E}(\delta),\, \alpha \bar{\delta} + \mathcal{E}(\bar{\delta})\big)
    = 0.
    \]

    Therefore, \eqref{eq:combineL} reduces to:
    \[
    \tilde{h}^L(\mathcal{E}(\bar{\delta})) 
    \leq \lambda\, \tilde{h}^L(\mathcal{E}(\zeta)) + (1-\lambda)\, \tilde{h}^L(\mathcal{E}(\delta)).
    \]

    Using \eqref{eq:bar_delta}, this implies:
    \[
    \tilde{h}^L\!\Big(
    \alpha \delta + \mathcal{E}(\delta) 
    + \lambda\, \Psi(\alpha \zeta + \mathcal{E}(\zeta),\, \alpha \delta + \mathcal{E}(\delta))
    \Big) 
    \leq 
    \lambda\, \tilde{h}^L(\mathcal{E}(\zeta)) + (1-\lambda)\, \tilde{h}^L(\mathcal{E}(\delta)).
    \]

    A similar argument using \eqref{eq:ineq1U} and \eqref{eq:ineq2U} shows:
    \[
    \tilde{h}^U\!\Big(
    \alpha \delta + \mathcal{E}(\delta) 
    + \lambda\, \Psi(\alpha \zeta + \mathcal{E}(\zeta),\, \alpha \delta + \mathcal{E}(\delta))
    \Big) 
    \leq 
    \lambda\, \tilde{h}^U(\mathcal{E}(\zeta)) + (1-\lambda)\, \tilde{h}^U(\mathcal{E}(\delta)).
    \]

    Thus, by Theorem \ref{theorem 3.1 p4}, $\tilde{h}$ is SLU$\mathcal{E}$P with respect to ${\Psi}$ on $\mathcal{S}$.
\end{proof}
\section{\textbf{SLUEP programming problem}}
       
       In this section, we explore a nonlinear programming problem that serves as an application of the SLU$\mathcal{E}$P functions framework. This particular problem, known as the SLU$\mathcal{E}$P programming problem, builds upon and generalizes the findings presented in~\cite{Iqbal2022}.

         \begin{eqnarray}\label{4.1}
          \hspace{3cm}(P)~ 
         \left\{\begin{array}{ll}
         	 min ~~\tilde{h}_{0}(\mathit{\zeta} ),\\
         	 
         	 \tilde{h}_{j}(\mathit{\zeta} )\preceq 0,~~~~1\leq j\leq m,\\
         	 
         	\mathit{\zeta} \in  \mathbb{R}^{n},
         \end{array}\right.
           \end{eqnarray}
            where $ \tilde{h}_{0}: \mathbb{R}^{n}\rightarrow  \mathcal{I}(\mathbb{R})$ and $ \tilde{h}_{j}: \mathbb{R}^{n}\rightarrow  \mathcal{I}(\mathbb{R}),~1\leq j\leq m$, are  SLU$\mathcal{E}$P functions on $ \mathbb{R}^{n}$.
        
        \noindent    
      Let ${X}$ denote the non-empty set of feasible solutions, defined by  
       \begin{equation}\label{4.2} 
          {X}=\big\{\mathit{\zeta} \in\mathbb{R}^{n}~:~\tilde{h}_{j}(\mathit{\zeta} )\preceq 0,~1\leq j\leq m\big\}.
       \end{equation} 

\begin{remark}
   By Theorem~\ref{theorem 3.8}, if the mapping $\mathcal{E}$ satisfies $\mathcal{E}(\mathcal{S}) \subseteq \mathcal{S}$, then the feasible set $X$ is $S\mathcal{E}I$.
\end{remark}





\begin{theorem}
Let $\tilde{h}_{j}: \mathbb{R}^{n} \rightarrow \mathcal{I}(\mathbb{R})$, for $0 \leq j \leq m$, be SLU$\mathcal{E}$P functions on $\mathbb{R}^{n}$, and let $X$ be the non-empty feasible region defined by \emph{(\ref{4.2})}.Consider the set of optimal solutions 
\[
X_{\text{opt}} = \big\{\, \zeta \in X : \tilde{h}_{0}(\zeta) = \min_{w \in X} \tilde{h}_{0}(w) \big\}
\] and $\mathcal{E}(X_{\text{opt}})\subseteq X_{\text{opt}}$. Then, $X_{\text{opt}}$
is an S$\mathcal{E}$I set with respect to $\Psi$.
\end{theorem}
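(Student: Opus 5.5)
Fix $\zeta,\delta\in X_{\text{opt}}$ and $\alpha,\lambda\in[0,1]$, and put
\[
\bar z=\alpha\delta+\mathcal{E}(\delta)+\lambda\Psi\big(\alpha\zeta+\mathcal{E}(\zeta),\alpha\delta+\mathcal{E}(\delta)\big).
\]
We must show $\bar z\in X_{\text{opt}}$, that is, $\bar z\in X$ and $\tilde h_0(\bar z)=\min_{w\in X}\tilde h_0(w)=:\mathcal{M}$. The plan has two steps, feasibility and then optimality.

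\emph{Feasibility.} Since $X_{\text{opt}}\subseteq X$ and $\mathcal{E}(X_{\text{opt}})\subseteq X_{\text{opt}}$, we have $\mathcal{E}(\zeta),\mathcal{E}(\delta)\in X$. Hence $\tilde h_j(\mathcal{E}(\zeta))\preceq 0$ and $\tilde h_j(\mathcal{E}(\delta))\preceq 0$ for every $1\le j\le m$. Each $\tilde h_j$ is SLU$\mathcal{E}$P on $\mathbb{R}^n$, so
\[
\tilde h_j(\bar z)\preceq \lambda\tilde h_j(\mathcal{E}(\zeta))+(1-\lambda)\tilde h_j(\mathcal{E}(\delta)).
\]
By Lemma~\ref{lemma 3.1 p4}, the right-hand side is $\preceq 0$. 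Therefore $\bar z\in X$. This is exactly the argument of Theorem~\ref{theorem 3.8}, applied with the hypothesis that $\mathcal{E}$ maps the relevant points into $X$.

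\emph{Optimality.} Again using $\mathcal{E}(\zeta),\mathcal{E}(\delta)\in X_{\text{opt}}$, we get $\tilde h_0(\mathcal{E}(\zeta))=\tilde h_0(\mathcal{E}(\delta))=\mathcal{M}$. The SLU$\mathcal{E}$P inequality for $\tilde h_0$ then gives
\[
\tilde h_0(\bar z)\preceq\lambda\mathcal{M}+(1-\lambda)\mathcal{M}=\mathcal{M}.
\]
The last equality uses that $\lambda,1-\lambda\ge 0$, so the interval sum is computed endpointwise. In the other direction, $\bar z\in X$ and $\mathcal{M}$ is the minimum of $\tilde h_0$ over $X$, so $\mathcal{M}\preceq\tilde h_0(\bar z)$. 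Since $\preceq$ is antisymmetric (compare the lower and upper endpoints separately), $\tilde h_0(\bar z)=\mathcal{M}$ and hence $\bar z\in X_{\text{opt}}$.

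The only delicate point is the meaning of $\min_{w\in X}\tilde h_0(w)$. $\preceq$ is only a partial order, so the minimum must be read as an interval $\mathcal{M}$ that is attained and satisfies $\mathcal{M}\preceq\tilde h_0(w)$ for all $w\in X$, which is the least-element sense dual to the maximum element defined earlier. Under this reading the comparison $\mathcal{M}\preceq\tilde h_0(\bar z)$ is legitimate. Under a weaker, non-dominated reading the argument would break, because one could only conclude $\tilde h_0(\bar z)\not\prec\mathcal{M}$. I would state the least-element interpretation explicitly and note that nonemptiness of $X_{\text{opt}}$ is implicit in the statement. With that, the result follows from Definition~\ref{definition 2.9 p4}.
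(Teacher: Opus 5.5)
Your proof is correct and follows the same route as the paper. The SLU$\mathcal{E}$P inequality for the constraints at $\mathcal{E}(\zeta),\mathcal{E}(\delta)\in X_{\text{opt}}\subseteq X$ gives $\bar z\in X$, and the SLU$\mathcal{E}$P inequality for $\tilde h_0$ together with minimality of $\mathcal{M}$ gives $\tilde h_0(\bar z)=\mathcal{M}$; you sensibly prove feasibility first, whereas the paper invokes minimality before checking $\bar z\in X$.

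One small correction to your closing remark: the argument does not break if $\mathcal{M}$ is merely a non-dominated value. In that case $\tilde h_0(\bar z)\preceq\mathcal{M}$ together with $\tilde h_0(\bar z)\not\prec\mathcal{M}$ already forces $\tilde h_0(\bar z)=\mathcal{M}$, by the definition of $\prec$.
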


\begin{proof}
By assumption, every $\tilde{h}_{j}$ for $0 \leq j \leq m$ is SLU$\mathcal{E}$P on $\mathbb{R}^{n}$. Let $\mathit{\zeta}, \mathit{\delta} \in X_{\text{opt}}$ and note that, by definition of optimality, we have
\[
\tilde{h}_{0}(\mathit{\zeta}) = \tilde{h}_{0}(\mathit{\delta}) = \tilde{h}_{0}(\mathcal{E}(\mathit{\zeta})) = \tilde{h}_{0}(\mathcal{E}(\mathit{\delta})) = \tilde{h}^{*},
\quad \text{where} \quad 
\tilde{h}^{*} := \min_{w \in X} \tilde{h}_{0}(w).
\]

Since $X$ is feasible and closed under $\mathcal{E}$, it follows that
$\mathit{\zeta}, \mathit{\delta}, \mathcal{E}(\mathit{\zeta}), \mathcal{E}(\mathit{\delta}) \in X$. By the SLU$\mathcal{E}$P property, for any $\alpha, \lambda \in [0,1]$, we have
\[
\tilde{h}_{0}\big( 
\alpha\, \mathit{\delta} + \mathcal{E}(\mathit{\delta})
+ \lambda\, \Psi(
\alpha\, \mathit{\zeta} + \mathcal{E}(\mathit{\zeta}),
\, \alpha\, \mathit{\delta} + \mathcal{E}(\mathit{\delta})
)
\big)
\preceq
\lambda\, \tilde{h}_{0}(\mathcal{E}(\mathit{\zeta})) + (1-\lambda)\, \tilde{h}_{0}(\mathcal{E}(\mathit{\delta})) 
= \tilde{h}^{*}.
\]
But since $\tilde{h}^{*}$ is the minimum value, the above inequality must hold as equality. Therefore,
\[
\tilde{h}_{0}\big(
\alpha\, \mathit{\delta} + \mathcal{E}(\mathit{\delta}) 
+ \lambda\, \Psi(
\alpha\, \mathit{\zeta} + \mathcal{E}(\mathit{\zeta}),
\, \alpha\, \mathit{\delta} + \mathcal{E}(\mathit{\delta})
)
\big) = \tilde{h}^{*}.
\]

Moreover, for all $1 \leq j \leq m$, the constraint functions satisfy
\[
\tilde{h}_{j}\big(
\alpha\, \mathit{\delta} + \mathcal{E}(\mathit{\delta})
+ \lambda\, \Psi(
\alpha\, \mathit{\zeta} + \mathcal{E}(\mathit{\zeta}),
\, \alpha\, \mathit{\delta} + \mathcal{E}(\mathit{\delta})
)
\big) \preceq 0.
\]

Hence, the constructed point lies in $X$ and achieves the optimal value $\tilde{h}^{*}$ for the objective. Therefore, it is also an element of $X_{\text{opt}}$:
\[
\alpha\, \mathit{\delta} + \mathcal{E}(\mathit{\delta})
+ \lambda\, \Psi(
\alpha\, \mathit{\zeta} + \mathcal{E}(\mathit{\zeta}),
\, \alpha\, \mathit{\delta} + \mathcal{E}(\mathit{\delta})
) \in X_{\text{opt}}.
\]
This proves that $X_{\text{opt}}$ is closed under the same generalized operation that defines an S$\mathcal{E}$I set.

Therefore, $X_{\text{opt}}$ is indeed an S$\mathcal{E}$I set with respect to $\Psi$.
\end{proof} 
       	
       	\begin{theorem}   
       		Let $\tilde{h}_{0}: \mathbb{R}^{n} \rightarrow \mathcal{I}(\mathbb{R})$ be a \emph{strictly} SLU$\mathcal{E}$P function on $\mathbb{R}^{n}$, and let each $\tilde{h}_{j}: \mathbb{R}^{n} \rightarrow \mathcal{I}(\mathbb{R})$ for $1 \leq j \leq m$ be SLU$\mathcal{E}$P functions defined on $\mathbb{R}^{n}$. Suppose the feasible region
\[
X = \big\{\, \zeta \in \mathbb{R}^{n} : \tilde{h}_{j}(\zeta) \preceq 0, \; 1 \leq j \leq m \big\}
\]
is non-empty. If $\zeta^{*}$ is a local minimizer of problem~\emph{(\ref{4.1})}, then $\zeta^{*}$ is a strict global minimizer of~\emph{(\ref{4.1})}.
       	\end{theorem}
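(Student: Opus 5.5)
The plan is to argue by contradiction, following the standard local-implies-global argument for preinvex problems as in \cite{Iqbal2022}, but with $\preceq$ replacing $\leq$. Suppose $\zeta^{*}\in X$ is a local minimizer of (\ref{4.1}) but not a strict global minimizer. Then there is $\bar\zeta\in X$, $\bar\zeta\neq\zeta^{*}$, with $\tilde{h}_0(\bar\zeta)\preceq\tilde{h}_0(\zeta^{*})$. To use the SLU$\mathcal{E}$P inequality, whose right-hand side involves values at $\mathcal{E}(\cdot)$, I would first move to the images under $\mathcal{E}$. I will assume, as in the preceding Remark and Theorem~\ref{theorem 3.8}, that $\mathcal{E}(X)\subseteq X$ and that $\mathcal{E}$ fixes the relevant points, i.e.\ $\tilde h_0(\mathcal{E}(\zeta^*))=\tilde h_0(\zeta^*)$ and $\tilde h_0(\mathcal{E}(\bar\zeta))=\tilde h_0(\bar\zeta)$. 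Without some assumption of this type, the inequality only controls $\tilde h_0$ along the $\mathcal{E}$-shifted path, and I expect this to be the main obstacle. I would state this hypothesis explicitly, in the spirit of the earlier assumption $\mathcal{E}(X_{\text{opt}})\subseteq X_{\text{opt}}$.

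Next, fix $\alpha\in[0,1]$. For $\lambda\in(0,1)$ set
\[
z_\lambda=\alpha\zeta^{*}+\mathcal{E}(\zeta^{*})+\lambda\Psi\big(\alpha\bar\zeta+\mathcal{E}(\bar\zeta),\alpha\zeta^{*}+\mathcal{E}(\zeta^{*})\big).
\]
By Theorem~\ref{theorem 3.8}, $X$ is S$\mathcal{E}$I, so $z_\lambda\in X$. Strictness of $\tilde h_0$ requires $\alpha\bar\zeta+\mathcal{E}(\bar\zeta)\neq\alpha\zeta^*+\mathcal{E}(\zeta^*)$. This holds for $\alpha>0$ when $\mathcal{E}$ fixes both points, since then the two points are $(1+\alpha)\bar\zeta\neq(1+\alpha)\zeta^*$. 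Strict SLU$\mathcal{E}$P then gives
\[
\tilde h_0(z_\lambda)\prec\lambda\tilde h_0(\mathcal{E}(\bar\zeta))+(1-\lambda)\tilde h_0(\mathcal{E}(\zeta^*))\preceq\tilde h_0(\zeta^*).
\]
The last inequality uses Lemma~\ref{lemma 3.1 p4} together with $\tilde h_0(\bar\zeta)\preceq\tilde h_0(\zeta^*)$.

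Finally, I need $z_\lambda$ to enter every neighbourhood of $\zeta^*$. This requires $z_\lambda\to\zeta^*$ as $\lambda,\alpha\to0^+$, that is, $\alpha\zeta^*+\mathcal{E}(\zeta^*)\to\zeta^*$, which holds under the fixed-point assumption, together with boundedness of $\Psi$ on the relevant pairs. I would choose $\alpha$ and $\lambda$ small enough that $z_\lambda$ lies in the neighbourhood $N_\varepsilon(\zeta^*)$ where local minimality gives $\tilde h_0(\zeta^*)\preceq\tilde h_0(z)$ for $z\in X\cap N_\varepsilon(\zeta^*)$. Combining this with the strict inequality above yields $\tilde h_0(z_\lambda)\prec\tilde h_0(z_\lambda)$, a contradiction. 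Hence $\tilde h_0(\zeta^*)\prec\tilde h_0(\zeta)$ for every $\zeta\in X\setminus\{\zeta^*\}$. Strictly, this conclusion needs ``not $\tilde h_0(\zeta)\preceq\tilde h_0(\zeta^*)$'' to be upgraded to $\prec$. I would handle this by running the contradiction against the weaker hypothesis ``not $\tilde h_0(\zeta^*)\prec\tilde h_0(\bar\zeta)$''. The case of LU-incomparable values is the delicate point. If it cannot be closed, I would state the result as the nonexistence of $\bar\zeta\neq\zeta^*$ with $\tilde h_0(\bar\zeta)\preceq\tilde h_0(\zeta^*)$, which is the usual meaning of strict global minimizer under the partial order $\preceq$.
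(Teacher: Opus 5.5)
Your skeleton is the paper's: take a competing feasible point, apply the strict SLU$\mathcal{E}$P inequality along $\lambda\mapsto z_\lambda$, and push $z_\lambda$ into the $\epsilon$-ball where local minimality applies. The gap is in how you bridge $\tilde h_0\circ\mathcal{E}$ and $\tilde h_0$.

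Since the competitor $\bar\zeta$ is arbitrary, your assumption amounts to three things:
$\mathcal{E}(\zeta^*)=\zeta^*$, which is a condition on the unknown minimizer;
$\tilde h_0\circ\mathcal{E}=\tilde h_0$ on all of $X$;
and, as you actually use it, $\mathcal{E}$ fixing every feasible point.
Together these essentially reduce the theorem to the case $\mathcal{E}=\mathrm{id}$.

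The missing idea is to apply the definition at $\mathcal{E}$-\emph{preimages}. Write $\zeta^*=\mathcal{E}(\zeta)$ and $w^*=\mathcal{E}(w)$, and use the pair $(w,\zeta)$. With $\alpha=0$ the base point is then exactly $\mathcal{E}(\zeta)=\zeta^*$. The right-hand side is $\lambda\tilde h_0(w^*)+(1-\lambda)\tilde h_0(\zeta^*)$, with no fixed-point condition. Strictness only needs $w^*\neq\zeta^*$. The point $z_\lambda=\zeta^*+\lambda\Psi(w^*,\zeta^*)$ is feasible, because SLU$\mathcal{E}$P of each $\tilde h_j$ at $(w,\zeta)$ gives $\tilde h_j(z_\lambda)\preceq\lambda\tilde h_j(w^*)+(1-\lambda)\tilde h_j(\zeta^*)\preceq 0$. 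As $\Psi(w^*,\zeta^*)$ is a fixed vector, small $\lambda$ puts $z_\lambda$ into $B_\epsilon(\zeta^*)$. This is the paper's Cases 1--2, which suffice on their own; you need neither $\alpha\to 0^+$ nor any boundedness of $\Psi$.

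There are two caveats. First, the paper takes these preimages from Lemma~\ref{lemma 2.1 p4}, but that lemma only gives $\mathcal{E}(X)\subseteq X$. What is really used is $X\subseteq\mathcal{E}(\mathbb{R}^n)$, e.g.\ $\mathcal{E}$ onto, as in Condition~A. Your sense that some extra hypothesis is unavoidable is correct. Take $n=1$, $\mathcal{E}\equiv 0$, $\Psi(a,b)=a$ if $b=0$ and $\Psi(a,b)=0$ otherwise, $\tilde h_0=[f,f]$ with $f(x)=\cos x-1-x^2/100$, and the single constraint $\tilde h_1\equiv[-1,-1]$. The right-hand side of the SLU$\mathcal{E}$P inequality is always $\tilde h_0(0)=[0,0]$, and $f\le 0$. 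Every point $\alpha\delta+\lambda\Psi(\alpha\zeta,\alpha\delta)$ with $\alpha>0$, $\zeta\neq\delta$, $\lambda\in(0,1)$ is nonzero, where $f<0$. So $\tilde h_0$ is strictly SLU$\mathcal{E}$P. Yet $f$ has a local minimum between $\pi$ and $3\pi/2$ that is not global. So the hypothesis to add is that feasible points are $\mathcal{E}$-images, not the fixed-point condition you propose.

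Second, the incomparable case you flag is not delicate and needs no contradiction. For every feasible $w^*\neq\zeta^*$ and small $\lambda\in(0,1)$ we have $\tilde h_0(\zeta^*)\preceq\tilde h_0(z_\lambda)\prec\lambda\tilde h_0(w^*)+(1-\lambda)\tilde h_0(\zeta^*)$. Comparing lower and upper endpoints, cancelling the $(1-\lambda)$-terms and dividing by $\lambda$ gives $\tilde h_0(\zeta^*)\prec\tilde h_0(w^*)$. That is genuine strict global minimality. It is more than the paper's argument shows, since the paper only rules out $\tilde h_0(w^*)\prec\tilde h_0(\zeta^*)$, i.e.\ it proves non-dominance.
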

       	\noindent
\textbf{Proof.}  
Since $\zeta^*$ is a local minimum of problem (4.1), it follows that $\zeta^* \in \mathbb{R}^n$ and $\tilde{h}_j(\zeta^*) \preceq 0$ for all $1 \leq j \leq m$. By the definition of local optimality, there exists some $\epsilon > 0$ such that
\[
\tilde{h}_0(\zeta^*) \preceq \tilde{h}_0(\delta), \quad \forall\, \delta \in B_\epsilon(\zeta^*) \cap X,\; \delta \neq \zeta^*,
\]
where $B_\epsilon(\zeta^*) = \{\delta \in \mathbb{R}^n : \|\delta - \zeta^*\| < \epsilon\}$.

\vspace{0.2cm}
\noindent
Assume, for contradiction, that there exists another feasible point $w^* \in X$, with $w^* \neq \zeta^*$, satisfying
\[
\tilde{h}_0(w^*) \prec \tilde{h}_0(\zeta^*).
\]

Since $\zeta^*, w^* \in X$, by Lemma \ref{lemma 2.1 p4} there exist $\zeta, w \in X$ such that
\[
\zeta^* = \mathcal{E}(\zeta), \quad w^* = \mathcal{E}(w).
\]

By Theorem~\ref{theorem 3.8}, for any $\alpha, \lambda \in [0,1]$,
\[
\alpha \zeta + \zeta^* + \lambda\, \Psi(\alpha w + w^*,\, \alpha \zeta + \zeta^*) \in X.
\]

If $\alpha w + w^* \neq \alpha \zeta + \zeta^*$, then the strict SLU$\mathcal{E}$P property implies
\[
\tilde{h}_0\!\big(\alpha \zeta + \zeta^* + \lambda\, \Psi(\alpha w + w^*,\, \alpha \zeta + \zeta^*)\big)
\prec \lambda\, \tilde{h}_0(w^*) + (1-\lambda)\, \tilde{h}_0(\zeta^*) =: \tilde{h}_0^*.
\]

\vspace{0.2cm}
\noindent
\textbf{Case 1:} $\alpha = 0$ and $\Psi(w^*, \zeta^*) = 0$.  
Then $\alpha \zeta + \zeta^* + \lambda\, \Psi(w^*, \zeta^*) = \zeta^*$, which implies
\[
\tilde{h}_0(\zeta^*) \prec \tilde{h}_0^*,
\]
a contradiction.

\vspace{0.2cm}
\noindent
\textbf{Case 2:} $\alpha = 0$ and $\Psi(w^*, \zeta^*) \neq 0$.  
Let $\bar{\lambda} = \min\!\big\{1,\; \epsilon/\|\Psi(w^*, \zeta^*)\|\big\}$. For any $\lambda \in (0, \bar{\lambda})$,
\[
\big\|\, \zeta^* + \lambda\, \Psi(w^*, \zeta^*) - \zeta^* \big\| = \lambda\, \|\Psi(w^*, \zeta^*)\| < \epsilon.
\]
Thus,
\[
\zeta^* + \lambda\, \Psi(w^*, \zeta^*) \in B_\epsilon(\zeta^*) \cap X,\quad \text{and} \quad \tilde{h}_0\!\big(\zeta^* + \lambda\, \Psi(w^*, \zeta^*)\big) \prec \tilde{h}_0^*,
\]
which contradicts the local optimality of $\zeta^*$.

\vspace{0.2cm}
\noindent
\textbf{Case 3:} $\alpha \neq 0$ and $\Psi(\alpha w + w^*,\, \alpha \zeta + \zeta^*) = 0$.  
Define $\bar{\alpha} = \min\!\big\{1,\; \epsilon/\|\zeta\|\big\}$. For any $\alpha \in (0, \bar{\alpha})$,
\[
\|\alpha \zeta + \zeta^* - \zeta^*\| = \alpha\, \|\zeta\| < \epsilon.
\]
Therefore,
\[
\alpha \zeta + \zeta^* \in B_\epsilon(\zeta^*) \cap X,\quad \text{and} \quad \tilde{h}_0(\alpha \zeta + \zeta^*) \prec \tilde{h}_0^*,
\]
which contradicts the local minimality of $\zeta^*$.

\vspace{0.2cm}
\noindent
\textbf{Case 4:} $\alpha \neq 0$ and $\Psi(\alpha w + w^*,\, \alpha \zeta + \zeta^*) \neq 0$.  
Take $\epsilon_1, \epsilon_2 > 0$ with $\epsilon_1 + \epsilon_2 = \epsilon$. Let $\bar{\alpha} = \min\!\big\{1,\; \epsilon_1/\|\zeta\|\big\}$ and choose any $\alpha \in (0, \bar{\alpha})$. Also, let
\[
\bar{\lambda} = \min\!\Big\{1,\; \frac{\epsilon_2}{\|\Psi(\frac{\epsilon_1}{\|\zeta\|} w + w^*,\, \frac{\epsilon_1}{\|\zeta\|} \zeta + \zeta^*)\|}\Big\},
\quad \lambda \in (0, \bar{\lambda}).
\]
Then,
\[
\| {\alpha} \mathit{\zeta} + \mathit{\zeta}^{*} + {\Psi}\big( {\alpha} w + w^{*},\, {\alpha} \mathit{\zeta} + \mathit{\zeta}^{*} \big) - \mathit{\zeta}^{*} \|
\]

\begin{equation*}
    \begin{aligned}
        &\leq {\alpha} \| \mathit{\zeta} \| + {\lambda} \| {\Psi} \big( {\alpha} w + w^{*},\, {\alpha} \mathit{\zeta} + \mathit{\zeta}^{*} \big) \| \\ 
&< \bar{{\alpha}} \| \mathit{\zeta} \| + {\lambda} \| {\Psi} \big( \bar{{\alpha}} w + w^{*},\, \bar{{\alpha}} \mathit{\zeta} + \mathit{\zeta}^{*} \big) \| \\ 
&\leq \epsilon_{1} + {\lambda} \left\| {\Psi} \left( \frac{\epsilon_{1}}{\| \mathit{\zeta} \|} w + w^{*},\, \frac{\epsilon_{1}}{\| \mathit{\zeta} \|} \mathit{\zeta} + \mathit{\zeta}^{*} \right) \right\|\\
&\leq\epsilon_{1}+\bar{ {\lambda}}\left\| {\Psi}\left(\frac{\epsilon_{1}}{\|\mathit{\zeta}\|} w+w^{*},\frac{\epsilon_{1}}{\|\mathit{\zeta}\|} \mathit{\zeta}+\mathit{\zeta}^{*}\right)\right\|.\\
       		&\leq\epsilon_{1}+\epsilon_{2}=\epsilon.
    \end{aligned}
\end{equation*}

Thus, the constructed point remains in $B_\epsilon(\zeta^*) \cap X$ and
\[
\tilde{h}_0\!\big(\alpha \zeta + \zeta^* + \lambda\, \Psi(\alpha w + w^*,\, \alpha \zeta + \zeta^*)\big) \prec \tilde{h}_0^*,
\]
which again contradicts local optimality.

\vspace{0.2cm}
\noindent
In every case, we reach a contradiction. Therefore, no such $w^*$ exists, and hence $\zeta^*$ is a strict global minimum point of problem (4.1).

\hfill$\square$


Now we define the KKT conditons for SLU$\mathcal{E}$I functions by considering an optimization problem involving an interval-valued objective function and real-valued constraint functions:
\begin{align*}
    \text{min~} \tilde{h}(\mathit{\zeta})& =  [\tilde{h}^L(\mathit{\zeta}), \tilde{h}^U(\mathit{\zeta})]\\
    \text{subject to~}   g_i (\mathit{\zeta}) &\leq 0,\quad i = 1,2,\ldots,n.
\end{align*}
\noindent
Now, we introduce an optimization problem defined on S$\mathcal{\mathcal{E}}$I set.  $\tilde{h}$ is SLU$\mathcal{\mathcal{E}}$I objective function with respect to a function $\Psi : \mathcal{S} \times \mathcal{S} \to \mathbb{R}^n$. Also assuming that inequality constraints $g_i(\mathit{\zeta})$, for $i = 1, 2, \ldots, n$, are S$\mathcal{\mathcal{E}}$I with respect to $\Psi$:

\begin{equation*}
    \begin{aligned}
        \text{(IVOP)} \quad & \min \,\tilde{h}(\mathcal{\mathcal{E}}(\mathit{\zeta} )) = \big[ \tilde{h}^L(\mathcal{\mathcal{E}}(\mathit{\zeta} )), \tilde{h}^U(\mathcal{\mathcal{E}}(\mathit{\zeta} )) \big] \\
& \text{subject to } g_i(\mathcal{\mathcal{E}}(\mathit{\zeta} )) \leq 0, \quad i = 1, 2, \ldots, n.
    \end{aligned}
\end{equation*}

Let the feasible set be defined as
\begin{equation*}
    P = \big\{ \mathit{\zeta}  \in \mathbb{R}^n : g_i(\mathcal{\mathcal{E}}(\mathit{\zeta} )) \leq 0, \, i = 1, 2, \ldots, n \big\}.
\end{equation*}

\begin{definition} \label{definition 4.1 p4}
 Let $\mathit{\zeta} ^*$ be a feasible solution to the IVOP. We say that $\mathit{\zeta} ^*$ is a \emph{non-dominated} solution if there exists no $\mathit{\zeta}  \in P$ such that $\tilde{h}(\mathcal{\mathcal{E}}(\mathit{\zeta} )) \prec \tilde{h}(\mathcal{\mathcal{E}}(\mathit{\zeta} ^*))$. In such a case, $\tilde{h}(\mathcal{\mathcal{E}}(\mathit{\zeta} ^*))$ is termed a non-dominated value of the IVF $\tilde{h}$.   
\end{definition}
Next,we prove the sufficiency of KKT conditions as follows:
\begin{theorem} \label{theorem 4.3 p4}
    Suppose $\tilde{h}:\mathcal{S}\subseteq \mathbb{R}^n\rightarrow \mathcal{I}(\mathbb{R})$ is a  SLU$\mathcal{\mathcal{E}}$I function with respect to $\Psi : \mathcal{S} \times \mathcal{S} \to \mathbb{R}^n$ and  each $g_i(\mathit{\zeta} )$, for $i = 1, 2, \ldots, n$, is S$\mathcal{\mathcal{E}}$I with respect to $\Psi$. If $\mathit{\zeta} ^* \in P$, satisfying the following conditions.
    
    \begin{equation*}
        \begin{aligned}
             \nabla\tilde{h}^L(\mathcal{E}(\mathit{\zeta} ^*))  + \sum_{i=1}^n v_i \nabla g_i(\mathcal{E}(\mathit{\zeta} ^*)) &= 0, \\
             \nabla\tilde{h}^U(\mathcal{E}(\mathit{\zeta} ^*)) + \sum_{i=1}^n v_i \nabla g_i(\mathcal{E}(\mathit{\zeta} ^*)) &= 0, \\
    \sum_{i=1}^n v_i g_i(\mathcal{E}(\mathit{\zeta} ^*)) &= 0, \\
    v_i &\geq 0,
        \end{aligned}
    \end{equation*}
    
    then $\mathit{\zeta} ^*$ is a solution of IVOP which is non-dominated.

  \begin{proof}
     Consider $\zeta^* \in P$. By using the  strongly LU-$\mathcal{E}$-invexity of $\tilde{h}$ and strongly $\mathcal{E}$-invexity of $g_i$, we have 
    
   \begin{equation*}
       \begin{aligned}
           &[\min \left\{
    \tilde{h}^L(\mathcal{E}(\zeta)) - \tilde{h}^L(\mathcal{E}(\zeta^*)),\,
    \tilde{h}^U(\mathcal{E}(\zeta)) - \tilde{h}^U(\mathcal{E}(\zeta^*))
\right\}, \\
&\max \left\{
    \tilde{h}^L(\mathcal{E}(\zeta)) - \tilde{h}^L(\mathcal{E}(\zeta^*)),\,
    \tilde{h}^U(\mathcal{E}(\zeta)) - \tilde{h}^U(\mathcal{E}(\zeta^*))
\right\}]\\
&\succeq  \langle\ {\Psi}( {\alpha} \mathit{\zeta} + \mathcal{\mathcal{E}}(\mathit{\zeta} ), {\alpha}\mathit{\zeta^*} + \mathcal{\mathcal{E}}(\mathit{\zeta^*} )),\nabla  \tilde{h}( \mathcal{\mathcal{E}}(\mathit{\zeta^*} ))\rangle_{gH}.
       \end{aligned}
   \end{equation*}
 In particular, when
   \begin{equation*}
    \begin{aligned}
        \min\{\tilde{h}^L(\mathcal{E}(\zeta)) - \tilde{h}^L(\mathcal{E}(\zeta^*)),~& \tilde{h}^U(\mathcal{E}(\zeta)) - \tilde{h}^U(\mathcal{E}(\zeta^*))\} = \tilde{h}^L(\mathcal{E}(\zeta)) - \tilde{h}^L(\mathcal{E}(\zeta^*)),
\end{aligned}
\end{equation*}
and
\begin{equation*}
\begin{aligned}
    \langle\ {\Psi}( {\alpha} \mathit{\zeta} + \mathcal{\mathcal{E}}(\mathit{\zeta} ), {\alpha}\mathit{\zeta^*} + \mathcal{\mathcal{E}}(\mathit{\zeta^*} )),\nabla  \tilde{h}( \mathcal{\mathcal{E}}(\mathit{\zeta^*} ))\rangle_{gH}
    = \big[
    &\Psi(\alpha \mathit{\zeta}  + \mathcal{\mathcal{E}}(\mathit{\zeta} ),\, \alpha \mathit{\zeta^*}  + \mathcal{\mathcal{E}}(\mathit{\zeta^*} ))^{\!\top}
    \nabla \tilde{h}^L(\mathcal{\mathcal{E}}(\mathit{\zeta^*} )), \\
    &\Psi(\alpha \mathit{\zeta}  + \mathcal{\mathcal{E}}(\mathit{\zeta} ),\, \alpha \mathit{\zeta^*}  + \mathcal{\mathcal{E}}(\mathit{\zeta^*} ))^{\!\top}
    \nabla \tilde{h}^U(\mathcal{\mathcal{E}}(\mathit{\zeta^*} )) 
    \big].
\end{aligned}
\end{equation*}
We get,
\begin{align*}
           \tilde{h}^L(\mathcal{\mathcal{E}}(\mathit{\zeta} )) - \tilde{h}^L(\mathcal{\mathcal{E}}(\mathit{\zeta} ^*)) 
&\geq \Psi(\alpha \mathit{\zeta} +\mathcal{\mathcal{E}}(\mathit{\zeta} ), \alpha \mathit{\zeta} ^*+\mathcal{\mathcal{E}}(\mathit{\zeta} ^*))^{\!\top}\nabla\tilde{h}^L(\mathcal{\mathcal{E}}(\mathit{\zeta} ^*)) \\
&= -\Psi(\alpha \mathit{\zeta} +\mathcal{\mathcal{E}}(\mathit{\zeta} ), \alpha \mathit{\zeta} ^*+\mathcal{\mathcal{E}}(\mathit{\zeta} ^*))^{\!\top}\sum_{i=1}^n v_i \nabla g_i(\mathcal{\mathcal{E}}(\mathit{\zeta} ^*)) \\
&\geq -\sum_{i=1}^n v_i \left( g_i(\mathcal{\mathcal{E}}(\mathit{\zeta} )) - g_i(\mathcal{\mathcal{E}}(\mathit{\zeta} ^*)) \right)\\
&= \sum_{i=1}^{n} \left( -v_i g_i(\mathcal{\mathcal{E}}(\mathit{\zeta} )) + v_i g_i(\mathcal{\mathcal{E}}(\mathit{\zeta} ^*)) \right) \\
&= \sum_{i=1}^{n} -v_i g_i(\mathcal{\mathcal{E}}(\mathit{\zeta} )) \\
&\geq 0.
\end{align*}
and,
\begin{align*}
\tilde{h}^U(\mathcal{\mathcal{E}}(\mathit{\zeta} )) - \tilde{h}^U(\mathcal{\mathcal{E}}(\mathit{\zeta} ^*)) &\geq \Psi(\alpha \mathit{\zeta} +\mathcal{\mathcal{E}}(\mathit{\zeta} ),\alpha \mathit{\zeta} ^*+\mathcal{\mathcal{E}}(\mathit{\zeta} ^*))^{\!\top} \nabla\tilde{h}^U(\mathcal{\mathcal{E}}(\mathit{\zeta} ^*))  \\
&= -\Psi(\alpha \mathit{\zeta} +\mathcal{\mathcal{E}}(\mathit{\zeta} ),\alpha \mathit{\zeta} ^*+\mathcal{\mathcal{E}}(\mathit{\zeta} ^*))^{\!\top}\sum_{i=1}^{n} v_i \nabla g_i(\mathcal{\mathcal{E}}(\mathit{\zeta} ^*)) \\
&\geq - \sum_{i=1}^{n} v_i (g_i(\mathcal{\mathcal{E}}(\mathit{\zeta} )) - g_i(\mathcal{\mathcal{E}}(\mathit{\zeta} ^*))) \\
&= \sum_{i=1}^{n} \left( -v_i g_i(\mathcal{\mathcal{E}}(\mathit{\zeta} )) + v_i g_i(\mathcal{\mathcal{E}}(\mathit{\zeta} ^*)) \right) \\
&= \sum_{i=1}^{n} -v_i g_i(\mathcal{\mathcal{E}}(\mathit{\zeta} )) \\
&\geq 0.
    \end{align*}
  When,
\begin{align*}
    \langle\ {\Psi}( {\alpha} \mathit{\zeta} + \mathcal{\mathcal{E}}(\mathit{\zeta} ), {\alpha}\mathit{\zeta^*} + \mathcal{\mathcal{E}}(\mathit{\zeta^*} )),\nabla  \tilde{h}( \mathcal{\mathcal{E}}(\mathit{\zeta^*} ))\rangle_{gH}
    = \big[
    &\Psi(\alpha \mathit{\zeta}  + \mathcal{\mathcal{E}}(\mathit{\zeta} ),\, \alpha \mathit{\zeta^*}  + \mathcal{\mathcal{E}}(\mathit{\zeta^*} ))^{\!\top}
    \nabla \tilde{h}^U(\mathcal{\mathcal{E}}(\mathit{\zeta^*} )), \\
    &\Psi(\alpha \mathit{\zeta}  + \mathcal{\mathcal{E}}(\mathit{\zeta} ),\, \alpha \mathit{\zeta^*}  + \mathcal{\mathcal{E}}(\mathit{\zeta^*} ))^{\!\top}
    \nabla \tilde{h}^L(\mathcal{\mathcal{E}}(\mathit{\zeta^*} )) 
    \big].
\end{align*}
Then we have,
\begin{equation*}
\begin{aligned}
\tilde{h}^L(\mathcal{\mathcal{E}}(\mathit{\zeta} )) - \tilde{h}^L(\mathcal{\mathcal{E}}(\mathit{\zeta} ^*)) &\geq \Psi(\alpha \mathit{\zeta} +\mathcal{\mathcal{E}}(\mathit{\zeta} ),\alpha \mathit{\zeta} ^*+\mathcal{\mathcal{E}}(\mathit{\zeta} ^*))^T \nabla\tilde{h}^U(\mathcal{\mathcal{E}}(\mathit{\zeta} ^*))  \\
&= -\Psi(\alpha \mathit{\zeta} +\mathcal{\mathcal{E}}(\mathit{\zeta} ),\alpha \mathit{\zeta} ^*+\mathcal{\mathcal{E}}(\mathit{\zeta} ^*))^T \sum_{i=1}^{n} v_i \nabla g_i(\mathcal{\mathcal{E}}(\mathit{\zeta} ^*)) \\
&\geq - \sum_{i=1}^{n} v_i (g_i(\mathcal{\mathcal{E}}(\mathit{\zeta} )) - g_i(\mathcal{\mathcal{E}}(\mathit{\zeta} ^*))) \\
&= \sum_{i=1}^{n} \left( -v_i g_i(\mathcal{\mathcal{E}}(\mathit{\zeta} )) + v_i g_i(\mathcal{\mathcal{E}}(\mathit{\zeta} ^*)) \right) \\
&= \sum_{i=1}^{n} -v_i g_i(\mathcal{\mathcal{E}}(\mathit{\zeta} )) \\
&\geq 0.
    \end{aligned}
\end{equation*}
and 
\begin{equation*}
\begin{aligned}
\tilde{h}^U(\mathcal{\mathcal{E}}(\mathit{\zeta} )) - \tilde{h}^U(\mathcal{\mathcal{E}}(\mathit{\zeta} ^*)) &\geq \Psi(\alpha \mathit{\zeta} +\mathcal{\mathcal{E}}(\mathit{\zeta} ),\alpha \mathit{\zeta} ^*+\mathcal{\mathcal{E}}(\mathit{\zeta} ^*))^T \nabla\tilde{h}^L(\mathcal{\mathcal{E}}(\mathit{\zeta} ^*))  \\
&= -\Psi(\alpha \mathit{\zeta} +\mathcal{\mathcal{E}}(\mathit{\zeta} ),\alpha \mathit{\zeta} ^*+\mathcal{\mathcal{E}}(\mathit{\zeta} ^*))^T \sum_{i=1}^{n} v_i \nabla g_i(\mathcal{\mathcal{E}}(\mathit{\zeta} ^*)) \\
&\geq - \sum_{i=1}^{n} v_i (g_i(\mathcal{\mathcal{E}}(\mathit{\zeta} )) - g_i(\mathcal{\mathcal{E}}(\mathit{\zeta} ^*))) \\
&= \sum_{i=1}^{n} \left( -v_i g_i(\mathcal{\mathcal{E}}(\mathit{\zeta} )) + v_i g_i(\mathcal{\mathcal{E}}(\mathit{\zeta} ^*)) \right) \\
&= \sum_{i=1}^{n} -v_i g_i(\mathcal{\mathcal{E}}(\mathit{\zeta} )) \\
&\geq 0.
    \end{aligned}
\end{equation*}
Thus, $\tilde{h}(\mathcal{\mathcal{E}}(\mathit{\zeta} )) \prec \tilde{h}(\mathcal{\mathcal{E}}(\mathit{\zeta}^*))$ does not hold.\\
On the other hand, if 
\begin{equation*}
    \begin{aligned}
        \min\{\tilde{h}^L(\mathcal{E}(\zeta)) - \tilde{h}^L(\mathcal{E}(\zeta^*)),~& \tilde{h}^U(\mathcal{E}(\zeta)) - \tilde{h}^U(\mathcal{E}(\zeta^*))\} = \tilde{h}^U(\mathcal{E}(\zeta)) - \tilde{h}^U(\mathcal{E}(\zeta^*)),
\end{aligned}
\end{equation*}
The remaining cases can be proven in a similar way.
   
  \end{proof}  
\end{theorem}

Next, we present the following example to illustrate and support the validity of Theorem~\ref{theorem 4.3 p4}.
\begin{example}
    Consider the following optimization problem defined on $\mathcal{S} = [\ln(2), \infty)$

\noindent where $\tilde{h} : \mathcal{S} \to \mathcal{I}(\mathbb{R})$, and $g_i : \mathcal{S} \to \mathbb{R}$

\begin{equation*}
\begin{aligned}
\text{Min } \tilde{h}(\mathit{\zeta} ) &= \left[ 4\mathit{\zeta}  - 8\ln(\mathit{\zeta} ),\, 8\mathit{\zeta}  - 16\ln(\mathit{\zeta} ) \right] \quad \text{(P1)} \\
\text{s.t. } g_1(\mathit{\zeta} ) &= \mathit{\zeta}  - 4 \leq 0 \\
g_2(\mathit{\zeta} ) &= \mathit{\zeta}  - 16 \leq 0
\end{aligned}
\end{equation*}

Let $\mathcal{\mathcal{E}} : \mathbb{R} \to \mathbb{R}$ be defined by $\mathcal{\mathcal{E}}(\mathit{\zeta} ) = e^{\mathit{\zeta} }$ and $\Psi : \mathbb{R} \times \mathbb{R} \to \mathbb{R}$ defined by $\Psi(\mathit{\zeta} ,\mathit{\delta} ) = -1$.

Then we consider the corresponding optimization problem with objective function weakly S$\mathcal{\mathcal{E}}$I with respect to $\Psi$ and constraint functions that are S$\mathcal{\mathcal{E}}$I with respect to $\Psi$, as

\begin{equation*}
   \begin{aligned}
        \text{Min } \tilde{h}(\mathcal{\mathcal{E}}(\mathit{\zeta} )) =
[4e^{\mathit{\zeta} } - 8\mathit{\zeta} ,\, 8e^{\mathit{\zeta} } - 16\mathit{\zeta} ] \quad \text{(P1$^*$)}
   \end{aligned}
\end{equation*}

\begin{equation*}
    \begin{aligned}
        \text{s.t. } g_1(\mathcal{\mathcal{E}}(\mathit{\zeta} )) &= e^{\mathit{\zeta} } - 4 \leq 0 \\
g_2(\mathcal{\mathcal{E}}(\mathit{\zeta} )) &= e^{\mathit{\zeta} } - 16 \leq 0
    \end{aligned}
\end{equation*}

The feasible set of $(\rm P1^*)$ is defined by
\begin{equation*}
    \mathcal{X} = \left\{ \mathit{\zeta}  \in \mathcal{S} : \mathit{\zeta}  \in [\ln(2), \ln(4)] \right\}.
\end{equation*}

 It can be readily verified that all the assumptions of Theorem~\ref{theorem 4.3 p4} are fulfilled.

The KKT conditions are given by

\begin{equation*}
    \begin{aligned}
        4e^{\mathit{\zeta} ^*} - 8 + v_1\, e^{\mathit{\zeta} ^*} + v_2\, e^{\mathit{\zeta} ^*} &= 0 \\
        8e^{\mathit{\zeta} ^*} - 16 + v_1\, e^{\mathit{\zeta} ^*} + v_2\, e^{\mathit{\zeta} ^*} &= 0 \\
v_1(e^{\mathit{\zeta} ^*} - 4) &= 0 \\
v_2(e^{\mathit{\zeta} ^*} - 16) &= 0
    \end{aligned}
\end{equation*}
\begin{figure}
		\centering
		\includegraphics[width=0.9\linewidth]{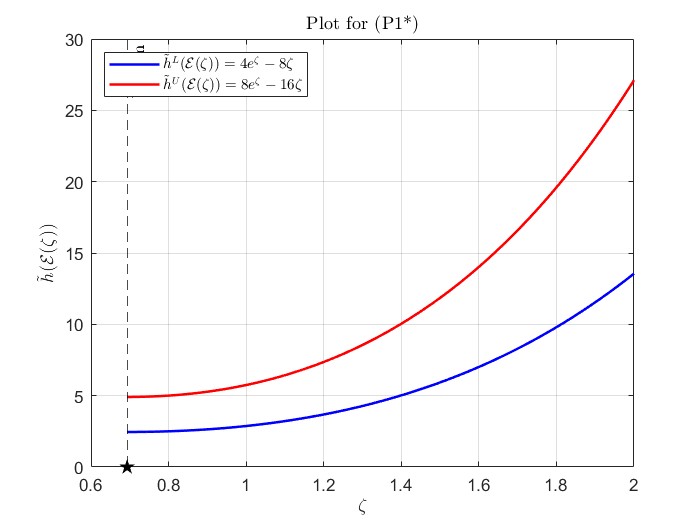}
		\label{fig:placeholder}
		\caption{$\zeta^* = ln(2)$ is a non-dominated solution of $(\rm P1^*)$}
\end{figure}

Taking $\mathit v_1 = v_2 = 0$, we get $\mathit{\zeta} ^* = \ln(2)$. Thus, by definition~\ref{definition 4.1 p4},  $\mathit{\zeta} ^* = \ln(2)$ is a non-dominated solution of $(\rm P1^*)$ which is evident from the Figure 5 also. 

\end{example}
        	\vspace{.2cm}
            
         \section{\bf Conclusion}
         In this work, we have proposed the robust notions of  strongly $\mathcal{E}$-invexity and $\mathcal{E}$-preinvexity, highlighting their distinctive characteristics and theoretical significance. To illustrate and validate these concepts, we have provided multiple illustrative examples which demonstrate their applicability and usefulness.
 This paper introduces and analyzes new classes of interval-valued functions—namely, SLU$\mathcal{E}$I, SLU$\mathcal{E}$P and PSLU$\mathcal{E}$P functions—under the LU ordering framework. By extending classical concepts of strong invexity and preinvexity to the interval setting, we have provided a richer theoretical foundation for handling uncertainty within nonlinear programming models.

Key structural properties were established, and explicit relationships among these new function classes and existing forms of generalized convexity were derived. Illustrative examples and counterexamples have been presented to clarify the scope and limitations of the proposed classes.

As a significant application, we formulated a nonlinear programming problem involving SLU$\mathcal{E}$P functions and the sufficiency of KKT optimality conditions . These results guarantee global optimality and non-dominated solutions under the generalized preinvexity and invexity assumptions, thereby extending classical optimality theory to more complex interval-valued settings on Riemannian manifolds.

Overall, this study advances the theory of interval-valued optimization and provides a robust framework for future research on generalized convexity and its applications in uncertain decision-making environments.

      \end{document}